\DeclareMathOperator{\Hom}{Hom}
\DeclareMathOperator{\Ker}{Ker}
\DeclareMathOperator{\Trace}{Tr}
\newcommand{\id}        {\mathsf{id}}
\newcommand{\op}        {\mathrm{op}}
\newcommand{\Vect}      {\mathsf{Vec}}
\newcommand{\fdim}      {\mathit{fd}}
\newcommand{\Trans}     {\mathsf{T}}
\newcommand{\Bil}       {\mathrm{Bil}}
\newcommand{\Rep}       {\mathrm{Rep}}
\newcommand{\Mod}       {\mathsf{mod}}
\newcommand{\fdMod}     {\mathsf{mod}_{\mathit{fd}}}
\newcommand{\Com}       {\mathsf{com}}
\newcommand{\fdCom}     {\mathsf{com}_{\mathit{fd}}}
\newcommand{\End}       {\mathrm{End}}
\newcommand{\coEnd}     {\mathrm{End}^{\mathrm{c}}}
\newcommand{\reg}       {\mathsf{R}}
\numberwithin{equation}{section}
\newtheorem{counter}            {}[section]
\theoremstyle{definition}
\newtheorem{definition}         [counter]{Definition}
\theoremstyle{plain}
\newtheorem{lemma}              [counter]{Lemma}
\newtheorem{proposition}        [counter]{Proposition}
\newtheorem{theorem}            [counter]{Theorem}
\newtheorem{corollary}          [counter]{Corollary}
\newtheorem{question}           [counter]{Question}
\theoremstyle{remark}
\newtheorem{remark}             [counter]{Remark}
\newtheorem{example}            [counter]{Example}
\theoremstyle{plain}
\newtheorem*{theorem*}{Theorem}
\theoremstyle{remark}
\title{Frobenius--Schur Indicator for Categories with Duality}
\author{Kenichi Shimizu}
\address{Graduate School of Mathematics, Nagoya University, Furo-cho, Chikusa-ku, Nagoya 464-8602, Japan}
\email{x12005i@math.nagoya-u.ac.jp}
\keywords{Frobenius--Schur indicator; category with duality; Hopf algebra; quantum groups}
\begin{document}

\begin{abstract}
  We introduce the Frobenius--Schur indicator for categories with duality to give a category-theoretical understanding of various generalizations of the Frobenius--Schur theorem including that for semisimple quasi-Hopf algebras, weak Hopf $C^*$-algebras and association schemes. Our framework also clarifies a mechanism of how the ``twisted'' theory arises from the ordinary case. As a demonstration, we establish twisted versions of the Frobenius--Schur theorem for various algebraic objects. We also give several applications to the quantum $SL_2$.
\end{abstract}

\maketitle

\section{Introduction}
\label{sec:introduction}

The aim of this paper is to develop a category-theoretical framework to unify various generalizations of the {\em Frobenius--Schur theorem}. We first recall the Frobenius--Schur theorem for compact groups. Let $G$ be a compact group, and let $V$ be a finite-dimensional continuous representation of $G$ with character $\chi_V$. The {\em $n$-th Frobenius--Schur indicator} (or {\em FS indicator}, for short) of $V$ is defined and denoted by
\begin{equation}
  \label{eq:FS-formula-cpt}
  \nu_n(V) = \int_G \chi_V(g^n) d \mu(g),
\end{equation}
where $\mu$ is the normalized Haar measure on $G$. The Frobenius--Schur theorem states that the value of the second FS indicator $\nu_2(V)$ has the following meaning:

\begin{theorem*}[Frobenius--Schur theorem]
  If $V$ is irreducible, then we have
  \begin{equation}
    \label{eq:FS-ind-RCH}
    \nu_2(V) = \begin{cases}
      +1 & \text{if $V$ is real}, \\
      \phantom{+} 0 & \text{if $V$ is complex}, \\
      -1 & \text{if $V$ is quaternionic}.
    \end{cases}
  \end{equation}
  Moreover, the following statements are equivalent: \\
  \indent {\rm (1)} $\nu_2(V) \ne 0$. \\
  \indent {\rm (2)} $V$ is isomorphic to its dual representation. \\
  \indent {\rm (3)} There exists a non-degenerate $G$-invariant bilinear form $b: V \times V \to \mathbb{C}$. \\
If one of the above equivalent statements holds, then such a bilinear form $b$ is unique up to scalar multiples and satisfies $b(w, v) = \nu_2(V) \cdot b(v, w)$ for all $v, w \in V$. In other words, $b$ is symmetric if $\nu_2(V) = +1$ and is skew-symmetric if $\nu_2(V) = -1$.
\end{theorem*}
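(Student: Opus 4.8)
The plan is to reinterpret the integral $\nu_2(V)$ as a signed count of $G$-invariant bilinear forms on $V$ and then to control those forms with Schur's lemma, reserving the real/complex/quaternionic dictionary for the end. Accordingly, the first step is the character identity for the symmetric and exterior squares of a finite-dimensional representation $W$: $\chi_{\Sym^2 W}(g) = \tfrac12\bigl(\chi_W(g)^2 + \chi_W(g^2)\bigr)$ and $\chi_{\Alt^2 W}(g) = \tfrac12\bigl(\chi_W(g)^2 - \chi_W(g^2)\bigr)$, so that $\chi_{\Sym^2 W}(g) - \chi_{\Alt^2 W}(g) = \chi_W(g^2)$. Taking $W = V^*$, using $\chi_{V^*}(g^2) = \chi_V(g^{-2})$ and invariance of the Haar measure under $g \mapsto g^{-1}$, and using that $\int_G \chi_U(g)\,d\mu(g) = \dim U^G$ for every representation $U$ (the average of the $G$-action on $U$ is the projection onto $U^G$, of trace $\dim U^G$), I get
\[
  \nu_2(V) \;=\; \dim (\Sym^2 V^*)^G \;-\; \dim (\Alt^2 V^*)^G .
\]
Since we work over $\mathbb{C}$, the space of bilinear forms on $V$ decomposes as symmetric $\oplus$ skew-symmetric, namely $(V \otimes V)^* \cong \Sym^2 V^* \oplus \Alt^2 V^*$, so the two terms on the right are exactly the spaces of $G$-invariant symmetric, resp. skew-symmetric, bilinear forms on $V$.

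The second step is Schur's lemma. A bilinear form $b$ on $V$ corresponds to the linear map $v \mapsto b(v,-)\colon V \to V^*$, and $b$ is $G$-invariant precisely when this map is $G$-equivariant; hence the two spaces above have total dimension $\dim \Hom_G(V, V^*)$. Because $V$, and therefore $V^*$, is irreducible, $\Hom_G(V, V^*)$ has dimension $1$ when $V \cong V^*$ and $0$ otherwise, and any nonzero element of it is an isomorphism (kernel and image are subrepresentations), i.e.\ the associated form is non-degenerate. This at once gives $\nu_2(V) \in \{-1, 0, +1\}$; the equivalence of (1) $\nu_2(V) \ne 0$, (2) $V \cong V^*$, and (3) the existence of a non-degenerate $G$-invariant bilinear form; and, in that case, uniqueness of $b$ up to a scalar. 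Moreover, when $\nu_2(V) \ne 0$ the line $\Hom_G(V, V^*)$ lies entirely in $(\Sym^2 V^*)^G$ or entirely in $(\Alt^2 V^*)^G$: in the former case $\nu_2(V) = 1 - 0 = +1$ and $b$ is symmetric, in the latter $\nu_2(V) = 0 - 1 = -1$ and $b$ is skew-symmetric, and both are recorded by the identity $b(w,v) = \nu_2(V)\,b(v,w)$.

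The last step matches this trichotomy with the classical one. Averaging an arbitrary Hermitian form yields a $G$-invariant positive-definite Hermitian form $h$ on $V$, which makes $V$ unitary, so that $V^* \cong \overline{V}$. If $\nu_2(V) = 0$ then $V \not\cong V^*$, hence $V \not\cong \overline{V}$, i.e.\ $V$ is complex. If $\nu_2(V) = \varepsilon \ne 0$, let $b$ be the invariant bilinear form and define $\phi\colon V \to V$ by $b(v,w) = h(v, \phi w)$ (with $h$ linear in the first and conjugate-linear in the second slot); then $\phi$ is conjugate-linear, $G$-equivariant and bijective, so by Schur $\phi^2 = \lambda\,\id$ with $\lambda \in \mathbb{C}^\times$. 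Feeding $b(w,v) = \varepsilon\,b(v,w)$ and the Hermitian symmetry of $h$ into this relation gives $h(\phi v, \phi v) = \varepsilon\bar\lambda\, h(v,v)$ for all $v$, so positivity of $h$ forces $\lambda$ real with $\operatorname{sign}(\lambda) = \varepsilon$; then $J := |\lambda|^{-1/2}\,\phi$ is a $G$-equivariant conjugate-linear map with $J^2 = \varepsilon\,\id$. For $\varepsilon = +1$ this is a real structure, so $V$ is the complexification of a real representation, i.e.\ real; for $\varepsilon = -1$ it is a quaternionic structure, so $V$ is quaternionic. Together with the previous paragraph, $\nu_2(V)$ equals $+1$, $0$, $-1$ exactly when $V$ is real, complex, quaternionic.

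The conceptual content lives entirely in the first two steps and is essentially formal once the $\Sym^2/\Alt^2$ character identity and the averaging/Schur machinery are in hand. The one place demanding care — more a nuisance than a genuine obstacle — is the last step: keeping the (anti)linearity conventions for $h$, $b$ and $\phi$ consistent so that the sign relation $\operatorname{sign}(\lambda) = \varepsilon$, which ties the symmetry type of $b$ to the type of the conjugate-linear structure $J$, comes out with the correct sign. I would settle this by a direct computation with one fixed convention rather than by any slicker argument.
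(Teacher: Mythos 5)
Your proof is correct, and it is essentially the classical argument; but it runs in the opposite direction from the route this paper takes, and it is worth seeing how the two meet in the middle. You start from the integral $\int_G \chi_V(g^2)\,d\mu$ and use the character identity $\chi_{\Sym^2 W}-\chi_{\Alt^2 W}=\chi_W(g^2)$ together with $\int_G\chi_U\,d\mu=\dim U^G$ to land on $\nu_2(V)=\dim\Bil_G^+(V)-\dim\Bil_G^-(V)$; the paper instead takes the right-hand side of that equation (the trace of the transposition map $\Trans_{V,V}$ on $\Hom_G(V,V^\vee)$, Definition~\ref{def:FS-ind} and Proposition~\ref{prop:FS-ind-basic}(b)) as the \emph{definition} of the indicator, and then recovers the integral formula by pushing the trace through a separable (forgetful) functor whose section is given by averaging against the Haar functional on $R(G)$ (\S\ref{subsec:sep-functor}, Lemma~\ref{lem:trace-1}, Corollary~\ref{cor:FS-ind-Haar-int} and the example following it). The content is the same --- your $\Sym^2/\Alt^2$ character identity is replaced by the trace identity $\Trace(T)=\Trace(fg)$ applied to the Haar averaging operator --- but the paper's direction is what generalizes to settings (pivotal algebras, quasi-Hopf algebras, weak Hopf algebras) where there is no underlying group on which to integrate $\chi_V(g^2)$. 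Your middle step (Schur's lemma, the trichotomy, the equivalences, and $b(w,v)=\nu_2(V)b(v,w)$) coincides with the paper's Theorem~\ref{thm:piv-FS}/\ref{thm:copiv-FS}. Your last step, the real/complex/quaternionic dictionary via the conjugate-linear structure $J$, is genuinely extra: the paper explicitly defers this to a forthcoming work, so your proposal proves more of the stated theorem than the paper's machinery does. One small point to close there: you derive ``$\nu_2=+1\Rightarrow$ real'' and ``$\nu_2=-1\Rightarrow$ quaternionic,'' whereas the displayed statement asserts the converses; these follow once you note that the three classes partition the irreducibles, and the only nontrivial part of that is that $V$ cannot be both real and quaternionic --- which holds because any two invariant conjugate-linear bijections $V\to V$ differ by a scalar $c$ (Schur), and rescaling changes $\phi^2$ by $|c|^2>0$, so the sign of $\phi^2$ is an invariant of $V$. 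Add that sentence and the argument is complete.
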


For $n \ge 3$, the representation-theoretic meaning of the $n$-th FS indicator is less obvious than the second one and there is no such theorem involving the $n$-th FS indicator. Hence, the second FS indicator could be of special interest. Unless otherwise noted, we simply call $\nu_2$ the {\em FS indicator} and refer to $\nu_n$ for $n \ge 3$ as {\em higher FS indicators}.

Generalizing those for compact groups, the FS indicator and higher ones have been defined for various algebraic objects, including (quasi-)Hopf algebras, tensor categories and conformal field theories; see \cite{MR1808131,MR2104908,MR2095575,MR1436801,MR1657800,MR2029790,MR2313527,MR2381536,MR2366965}. Among others, the theory of Ng and Schauenburg \cite{MR2313527,MR2381536,MR2366965} is especially important since it gives a unified category-theoretical understanding of all of \cite{MR1808131,MR2104908,MR2095575,MR1436801,MR1657800,MR2029790}. For the case of a semisimple (quasi-)Hopf algebra, a generalization of the Frobenius--Schur theorem is also formulated and proved; see \cite{MR1808131,MR2104908,MR2095575}. These results have many applications in Hopf algebras and tensor categories; see \cite{MR2725181,MR1942273,MR1919158,MR2213320,KMN09,MR2196640,MR2774703,KenichiShimizu:2012}. 

On the other hand, there are several generalizations in other directions. For example, the earlier result of Linchenko and Montgomery \cite{MR1808131} can be thought, in fact, as a generalization of the Frobenius--Schur theorem for a finite-dimensional semisimple algebra with an anti-algebra involution. Based on their result, Hanaki and Terada \cite{TeradaJunya:2006-03} proved a generalization of the Frobenius--Schur theorem for association schemes and gave some applications to association schemes. Doi \cite{MR2674691} reconstructed the results of \cite{MR1808131} with an emphasis on the use of the theory of symmetric algebras. Recently, Geck \cite{2011arXiv1110.5672G} proved a result similar to Doi and gave some applications to finite Coxeter groups.

Unlike Hopf algebras, the representation categories of such algebras do not have a natural structure of a monoidal category and therefore we cannot understand these results in the framework of Ng and Schauenburg. Our first question is:

\begin{question}
  \label{Q:motivation-1}
  Is there a good category-theoretical framework to understand the FS indicator and the Frobenius--Schur theorem for such algebras?
\end{question}

The second question is about the twisted versions of some of the above. Given an automorphism $\tau$ of a finite group (or a semisimple Hopf algebra), the $n$-th $\tau$-twisted FS indicator $\nu_n^\tau$ is defined by twisting the definition of $\nu_n$ by $\tau$ and the twisted version of the Frobenius--Schur theorem is also formulated and proved; see \cite{MR1078503,MR2068079,MR2879228}. Our second question is:

\begin{question}
  \label{Q:motivation-2}
  If there is an answer to Question~\ref{Q:motivation-1}, then what is its twisted version?
\end{question}

In this paper, we give answers to these questions. Following the approaches of \cite{MR1657800,MR2029790,MR2313527}, we see that the duality is what we really need to define the FS indicator. This observation leads us to the notion of a {\em category with duality}, which has been well-studied in the theory of Witt groups \cite{MR2181829,MR2520968}. As an answer to Question~\ref{Q:motivation-1}, we introduce and study the FS indicator for categories with duality. Considering a suitable category and suitable duality, we can recover various generalizations of the Frobenius--Schur theorem for compact groups. We also introduce a method to ``twist'' the given duality on a category. This gives an answer to Question~\ref{Q:motivation-2}; in fact, the twisted FS indicator can be understood as the ``untwisted'' FS indicator with respect to the twisted duality.


\subsection{Organization and Summary of Results}

The present paper is organized as follows: In Section~\ref{sec:categ-with-dual}, following Mac Lane \cite{MR1712872}, we recall some basic results on adjoint functors and then introduce a category with duality in terms of adjunctions. By generalizing the definitions of  \cite{MR1657800,MR2029790,MR2313527}, we define the FS indicator of an object of a category with duality over a field $k$ (Definition~\ref{def:FS-ind}). We also introduce a general method to ``twist'' the given duality by an adjunction. Then the twisted FS indicator is defined to be the ``untwisted'' FS indicator with respect to the twisted duality.

Pivotal Hopf algebras are introduced as a class of Hopf algebras whose representation category is a pivotal monoidal category; see, e.g., \cite{KMN09}. Motivated by this notion, in Section~\ref{sec:piv-alg}, a {\em pivotal algebra} is defined to be a triple $(A, S, g)$ consisting of an algebra $A$, an anti-algebra map $S: A \to A$ and an invertible element $g \in A$ satisfying certain conditions (Definition~\ref{def:piv-alg}). The representation category of a pivotal algebra is not monoidal in general but has duality. Therefore the FS indicator of an $A$-module is defined in the way of Section~\ref{sec:categ-with-dual}. In Section~\ref{sec:piv-alg}, we study the FS indicator for pivotal algebras and prove some fundamental properties of them.

From our point of view, \eqref{eq:FS-formula-cpt} is not the definition but a formula to compute the FS indicator. It is natural to ask when such a formula exists. In Section~\ref{sec:piv-alg}, we also give a formula for a separable pivotal algebra (Theorem~\ref{thm:piv-FS-ind-ch}); if a pivotal algebra $A = (A, S, g)$ has a separability idempotent $E$, then
\begin{equation}
  \label{eq:FS-formula-intro}
  \nu(V) = \sum_i \chi_V \Big( S(E'_i) g E''_i \Big)
  \quad \Big( E = \sum_i E_i' \otimes E_i'' \Big)
\end{equation}
for all finite-dimensional left $A$-module $V$, where $\chi_V$ is the character of $V$. The relation between this formula and the results of \cite{MR1808131,MR2674691,2011arXiv1110.5672G} is discussed. By specializing \eqref{eq:FS-formula-intro}, we obtain a formula for group-like algebras (Example \ref{ex:FS-ind-ch-GLalg}) and for finite-dimensional weak Hopf $C^*$-algebras (Example \ref{ex:FS-ind-ch-WHA}). We also obtain the formula of Mason and Ng \cite{MR2104908} for finite-dimensional semisimple quasi-Hopf algebras and its twisted version (\S\ref{subsec:quasi-hopf-algebras}).

In Section~\ref{sec:coalgebras}, we introduce a {\em copivotal coalgebra} as the dual notion of pivotal algebras. Each result of Section~\ref{sec:piv-alg} has an analogue in the case of copivotal coalgebras. A crucial difference from the case of algebras is that there are infinite-dimensional coseparable coalgebras. For example, the Hopf algebra $R(G)$ of continuous representative functions on a compact group $G$ is coseparable with coseparability idempotent given by the Haar measure on $G$. The Formula~\eqref{eq:FS-formula-cpt} is obtained by applying the coalgebraic version of~\eqref{eq:FS-formula-intro} to $R(G)$.

In Section~\ref{sec:quantum-sl_2}, we apply our results to the quantum coordinate algebra $\mathcal{O}_q(SL_2)$ and the quantized universal enveloping algebra $U_q(\mathfrak{sl}_2)$. We first determine the FS indicator of all simple right $\mathcal{O}_q(SL_2)$-comodules. In a similar way, we also determine the twisted FS indicator with respect to an involution of $\mathcal{O}_q(SL_2)$ corresponding to the group homomorphism
\begin{equation*}
  SL_2(k) \to SL_2(k), \quad
  \begin{pmatrix} a & b \\ c & d \end{pmatrix} \mapsto \begin{pmatrix} a & - b \\ - c & d \end{pmatrix}
\end{equation*}
in the classical limit $q \to 1$. Similar results for $U_q(\mathfrak{sl}_2)$ are also given by using the Hopf pairing between $\mathcal{O}_q(SL_2)$ and $U_q(\mathfrak{sl}_2)$.

\begin{remark}
  To answer Question~\ref{Q:motivation-1}, we need to work in a ``non-monoidal'' setting. Since, as we have remarked, the Frobenius--Schur theory has some good applications even in non-monoidal settings, the FS indicator for categories with duality could be interesting. However, to define the higher (twisted) FS indicators, a monoidal structure seems to be necessary. At least, there is a reason why we cannot define higher FS indicators for categories with duality; see Remarks \ref{rem:transposition-and-E-map} and \ref{rem:FS-higher}.

  It is interesting to construct a twisted version of \cite{MR2313527,MR2381536,MR2366965}. One of the referees kindly pointed out to the author that in May 2012, Daniel Sage gave a talk on a category-theoretic definition of the higher twisted FS indicators for Hopf algebras at the Lie Theory Workshop held at University of Southern California. Independently, after the submission of the first version of this paper, the author obtained a description of the higher twisted FS indicator for Hopf algebras by using the crossed product monoidal category. In this paper, we also mention higher twisted FS indicators for pivotal monoidal categories but leave the details for future work; see \S\ref{subsec:group-action-pivotal}.
\end{remark}

\begin{remark}
  Linchenko and Montgomery \cite{MR1808131} established a relation between the FS indicator and invariant bilinear forms on an irreducible representation. Unlike the case of compact groups, a relation between the FS indicator and ``reality'' of representations is not known in the case of Hopf algebras; in fact, as remarked in \cite{MR1808131}, the reality of a representation of a Hopf algebra is not defined since, in general, a Hopf algebra does not have a good basis like the group elements of the group algebra. In a forthcoming paper, we will introduce the notions of real, complex and quaternionic representations of a Hopf $*$-algebra and Formulate \eqref{eq:FS-ind-RCH} in a Hopf algebraic context. We will also provide an exact quantum analog of the Frobenius--Schur theorem for compact quantum groups.
\end{remark}

\subsection{Notation}

Given a category $\mathcal{C}$ and  $X, Y \in \mathcal{C}$, we denote by $\Hom_\mathcal{C}(X, Y)$ the set of all morphisms from $X$ to $Y$. $\mathcal{C}^{\op}$ means the opposite category of $\mathcal{C}$. An object $X \in \mathcal{C}$ is often written as $X^{\op}$ when it is regarded an object of $\mathcal{C}^{\op}$. A similar notation is used for morphisms. A functor $F: \mathcal{C} \to \mathcal{D}$ is denoted by $F^{\op}$ if it is regarded as a functor $\mathcal{C}^{\op} \to \mathcal{D}^{\op}$.

Throughout, we work over a fixed field $k$ whose characteristic is not two. By an algebra, we mean a unital associative algebra over $k$. Given a vector space $V$ (over $k$), we denote by $V^\vee = \Hom_k(V, k)$ the dual space of $V$. For $f \in V^\vee$ and $v \in V$, we often write $f(v)$ as $\langle f, v \rangle$. Unless otherwise noted, the unadorned tensor symbol $\otimes$ means the tensor product over $k$. Given $t \in V^{\otimes n}$, we often write $t$ as
\begin{equation*}
  t = t^{1} \otimes t^{2} \otimes \dotsb \otimes t^{n} \in V \otimes V \otimes \dotsb \otimes V
\end{equation*}
The comultiplication and the counit of a coalgebra are denoted by $\Delta$ and $\varepsilon$, respectively. For an element $c$ of a coalgebra, we use Sweedler's notation
\begin{equation*}
  \Delta(c) = c_{(1)} \otimes c_{(2)}, \quad
  \Delta(c_{(1)}) \otimes c_{(2)} = c_{(1)} \otimes c_{(2)} \otimes c_{(3)} = c_{(1)} \otimes \Delta(c_{(2)}),
  \ \dotsc
\end{equation*}

\section{Categories with Duality}

\label{sec:categ-with-dual}
\subsection{Adjunctions}

Following Mac Lane \cite{MR1712872}, we recall basic results on adjunctions. Let $\mathcal{C}$ and $\mathcal{D}$ be categories. An {\em adjunction} from $\mathcal{C}$ to $\mathcal{D}$ is a triple $(F, G, \Phi)$ consisting of functors $F: \mathcal{C} \to \mathcal{D}$ and $G: \mathcal{D} \to \mathcal{C}$ and a natural bijection $\Phi_{X,Y}: \Hom_\mathcal{D}(F(X), Y) \to \Hom_\mathcal{C}(X, G(Y))$ ($X \in \mathcal{C}$, $Y \in \mathcal{D}$).

Given an adjunction $(F, G, \Phi)$ from $\mathcal{C}$ to $\mathcal{D}$, the {\em unit} $\eta: \id_{\mathcal{C}} \to G F$ and the {\em counit} $\varepsilon: F G \to \id_{\mathcal{D}}$ of the adjunction $(F, G, \Phi)$ are defined by
$\eta_X^{} = \Phi_{X,F(X)}^{}(\id_{F(X)}^{})$ and $\varepsilon_Y^{} = \Phi_{G(Y),Y}^{-1} (\id_{G(Y)}^{})$ for $X \in \mathcal{C}$ and $Y \in \mathcal{D}$, respectively. They satisfy the counit-unit equations
\begin{equation}
  \label{eq:adjunction-2}
  \varepsilon_{F(X)} \circ F(\eta_{X}^{}) = \id_{F(X)} \text{\quad and \quad}
  G(\varepsilon_{Y}) \circ \eta_{G(Y)}^{} = \id_{G(Y)}
\end{equation}
for all $X \in \mathcal{C}$ and $Y \in \mathcal{D}$. By using $\eta$, the natural bijection $\Phi$ is expressed as
\begin{equation}
  \label{eq:adjunction-3}
  \Phi_{X,Y}(f) = G(f) \circ \eta_{X}
  \quad (f \in \Hom_\mathcal{C}(F(X), Y)
\end{equation}
Similarly, by using $\varepsilon$, the inverse of $\Phi$ is expressed as
\begin{equation}
  \label{eq:adjunction-4}
  \Phi_{X,Y}^{-1}(g) = \varepsilon_{Y} \circ F(g)
  \quad (g \in \Hom_\mathcal{D}(X, G(Y))
\end{equation}
Note that $\circ$ at the right-hand side stands for the composition in $\mathcal{D}$. We will deal with the case where $\mathcal{D} = \mathcal{C}^{\op}$, the opposite category of $\mathcal{C}$.

Each adjunction is determined by its unit and counit; indeed, let $F: \mathcal{C} \to \mathcal{D}$ and $G: \mathcal{D} \to \mathcal{C}$ be functors, and let $\eta: \id_{\mathcal{C}} \to G F$ and $\varepsilon: F G \to \id_{\mathcal{D}}$ be natural transformations satisfying \eqref{eq:adjunction-2}. If we define $\Phi$ by \eqref{eq:adjunction-3}, then the triple $(F, G, \Phi)$ is an adjunction from $\mathcal{C}$ to $\mathcal{D}$ whose unit and counit are $\eta$ and $\varepsilon$. From this reason, we abuse terminology and refer to such a quadruple $(F, G, \eta, \varepsilon)$ as an adjunction from $\mathcal{C}$ to $\mathcal{D}$.

\subsection{Categories with Duality}
\label{subsec:cat-with-duality}

The following terminologies are taken from Balmer \cite{MR2181829} and Calm\'es-Hornbostel \cite{MR2520968}.

\begin{definition}
  \label{def:cat-with-duality}
  A {\em category with duality} is a triple $\mathcal{C} = (\mathcal{C}, (-)^\vee, j)$ consisting of a category $\mathcal{C}$, a contravariant functor $(-)^\vee: \mathcal{C} \to \mathcal{C}$ and a natural transformation $j: \id_\mathcal{C} \to (-)^{\vee \vee}$ satisfying
  \begin{equation}
    \label{eq:duality-1}
    (j_X)^\vee \circ j_{X^\vee} = \id_{X^\vee}
  \end{equation}
  for all $X \in \mathcal{C}$. If, moreover, $j$ is a natural isomorphism, then we say that $\mathcal{C}$ is a {\em category with strong duality}, or, simply, $\mathcal{C}$ is {\em strong}.
\end{definition}

Let $\mathcal{C}$ be a category with duality. We call the functor $(-)^\vee: \mathcal{C} \to \mathcal{C}$ the {\em duality functor} of $\mathcal{C}$. A pivotal monoidal category is an example of categories with duality; see \cite[Appendix]{MR2095575}. Thus we call the natural transformation $j: \id_\mathcal{C} \to (-)^{\vee\vee}$ the {\em pivotal morphism} of $\mathcal{C}$.

\begin{example}
  \label{ex:cat-w-dual-invol-Hopf}
  Let $H$ be a Hopf algebra with antipode $S$. If $H$ is {\em involutory}, {\em i.e.}, $S^2 = \id_H$, then the category $\Mod(H)$ of left $H$-modules is a category with duality; the duality functor is given by taking the dual $H$-module and the pivotal morphism is given by the canonical map
  \begin{equation}
    \label{eq:k-Vec-piv-mor}
    \iota_V: V \to V^{\vee\vee} = \Hom_k(\Hom_k(V, k),k),
    \quad \langle \iota(v), f \rangle = \langle f, v \rangle
    \quad (v \in V, f \in V^\vee)
  \end{equation}
  The full subcategory $\fdMod(H)$ of $\Mod(H)$ of finite-dimensional left $H$-modules is a category with strong duality since $\iota_V$ is an isomorphism if (and only if) $\dim_k V < \infty$.
\end{example}

Let $D$ denote the duality functor of $\mathcal{C}$ regarded as a covariant functor from $\mathcal{C}$ to $\mathcal{C}^{\op}$. Definition \ref{def:cat-with-duality} says that the quadruple $(D, D^{\op}, j, j^{\op}): \mathcal{C} \to \mathcal{C}^{\op}$ is an adjunction. Hence we obtain a natural bijection
\begin{equation}
  \label{eq:transposition-1}
  \begin{aligned}
    \Trans_{X,Y}:
    \Hom_\mathcal{C}(X, Y^\vee)
    & = \Hom_{\mathcal{C}^{\op}}(D(Y), X^{\op}) \\
    \longrightarrow & \Hom_\mathcal{C}(Y, D^{\op}(X^{\op}))
    = \Hom_\mathcal{C}(Y, X^\vee)
    \quad (X, Y \in \mathcal{C})
  \end{aligned}
\end{equation}
which we call the {\em transposition map}. By~\eqref{eq:adjunction-3}, $\Trans_{X,Y}$ is expressed as
\begin{equation}
  \label{eq:transposition-2}
  \Trans_{X,Y}(f) = f^\vee \circ j_Y
  \quad (f \in \Hom_\mathcal{C}(X, Y^\vee))
\end{equation}
By \eqref{eq:adjunction-4}, we have $\Trans_{X,Y}^{-1}(g) = g^\vee \circ j_X = \Trans_{Y,X}(g)$ for $g \in \Hom_\mathcal{C}(Y, X^\vee)$. Hence we have
\begin{equation}
  \label{eq:transposition-3}
  \Trans_{Y,X} \circ \Trans_{X,Y} = \id_{\Hom_\mathcal{C}(X,Y^\vee)}^{}
\end{equation}

Note that $j$ is not necessarily an isomorphism. By understanding a category with duality as a kind of adjunction, we obtain the following characterization of categories with strong duality.

\begin{lemma}
  \label{lem:duality-strong}
  For a category $\mathcal{C}$ with duality, the following are equivalent: \\
  \indent {\rm (1)} $\mathcal{C}$ is a category with strong duality. \\
  \indent {\rm (2)} The duality functor $(-)^\vee: \mathcal{C} \to \mathcal{C}^{\op}$ is an equivalence.
\end{lemma}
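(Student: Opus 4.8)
The plan is to show the two implications separately, treating the duality functor $D\colon \mathcal{C}\to\mathcal{C}^{\op}$ as one half of the adjunction $(D, D^{\op}, j, j^{\op})$ recorded just before the statement. Recall that an adjunction $(F,G,\eta,\varepsilon)$ is an adjoint equivalence precisely when both $\eta$ and $\varepsilon$ are natural isomorphisms, and that an adjunction whose left adjoint is an equivalence is automatically an adjoint equivalence (one replaces the given counit by the composite making the triangle identities hold, or invokes the standard fact that a left adjoint is an equivalence iff unit and counit are isos). Here the unit of our adjunction is $j$ and the counit is $j^{\op}$, so ``$j$ is a natural isomorphism'' and ``$j^{\op}$ is a natural isomorphism'' are literally the same condition.

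First I would prove (1)$\Rightarrow$(2). Assume $j$ is a natural isomorphism. Then in the adjunction $(D, D^{\op}, j, j^{\op})\colon \mathcal{C}\to\mathcal{C}^{\op}$ both the unit $j\colon \id_{\mathcal{C}}\to D^{\op}D$ and the counit $j^{\op}\colon DD^{\op}\to\id_{\mathcal{C}^{\op}}$ are natural isomorphisms (the latter because it is obtained from the former by passing to the opposite category, which preserves isomorphisms). Hence $D$ is an equivalence with quasi-inverse $D^{\op}$, i.e.\ $(-)^\vee\colon \mathcal{C}\to\mathcal{C}^{\op}$ is an equivalence.

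For (2)$\Rightarrow$(1), assume $(-)^\vee\colon \mathcal{C}\to\mathcal{C}^{\op}$, i.e.\ $D$, is an equivalence. A left adjoint that is an equivalence has its unit an isomorphism: indeed, if $D$ is fully faithful then the unit $\eta = j$ of any adjunction with $D$ as left adjoint is a natural isomorphism (this is the standard characterization of fully faithful left adjoints via the unit), and an equivalence is in particular fully faithful. Concretely, one can also argue directly: since $D$ is fully faithful, the map $\Hom_{\mathcal{C}}(X, D^{\op}D(Y)) \to \Hom_{\mathcal{C}^{\op}}(D(X), DD^{\op}D(Y))$ is a bijection, and chasing $j_X$ through this bijection together with the triangle identity $D(j_X)\circ (j^{\op})_{D(X)} = \id$ (which is just the counit–unit equation \eqref{eq:adjunction-2} written out) forces $j_X$ to be an isomorphism for every $X$. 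Therefore $j$ is a natural isomorphism and $\mathcal{C}$ is strong.

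The only mild subtlety — and the step I would be most careful about — is bookkeeping with the opposite category: one must keep straight that the unit of $(D, D^{\op}, j, j^{\op})$ is $j$ itself while the counit is $j^{\op}$, that an arrow in $\mathcal{C}^{\op}$ is an isomorphism iff the corresponding arrow in $\mathcal{C}$ is, and that the triangle identity \eqref{eq:adjunction-2} in this case specializes exactly to \eqref{eq:duality-1}. Once this dictionary is fixed, both implications are immediate consequences of the standard fact that a functor is an equivalence iff it is (one half of) an adjoint equivalence, iff it is a left adjoint with invertible unit and counit; there is no real computation to grind through.
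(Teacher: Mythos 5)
Your proof is correct. The backward direction (2)$\Rightarrow$(1) is exactly the paper's argument: an equivalence is fully faithful, and a left adjoint is fully faithful iff the unit of the adjunction is an isomorphism, so $j$ is invertible. The forward direction is where you deviate slightly: the paper also invokes the fully-faithfulness criterion to conclude that $D$ is fully faithful, and then separately checks essential surjectivity via $X \cong X^{\vee\vee} = D(X^\vee)$; you instead observe that the counit of $(D, D^{\op}, j, j^{\op})$ is literally $j^{\op}$, so ``unit invertible'' and ``counit invertible'' are one and the same hypothesis, and the adjunction is an adjoint equivalence outright. Your route is marginally more economical (no separate essential-surjectivity step and no need for the fully-faithful lemma in that direction), at the cost of relying on the fact that an adjunction with invertible unit and counit is an adjoint equivalence; the paper's version has the small advantage of exhibiting $X^\vee$ explicitly as a preimage witnessing essential surjectivity. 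Your closing remark that the triangle identity \eqref{eq:adjunction-2} specializes to \eqref{eq:duality-1} is the right bookkeeping and is exactly what makes the quadruple an adjunction in the first place. Both arguments are sound; the difference is cosmetic.
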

\begin{proof}
  Let, in general, $(F, G, \eta, \varepsilon)$ be an adjunction between some categories. Then $F$ is fully faithful if and only if $\eta$ is an isomorphism \cite[IV.3]{MR1712872}. Now we apply this result to the above quadruple $(D, D^{\op}, j, j^{\op})$ as follows: If $\mathcal{C}$ is strong, then $D$ is fully faithful. Since $X \cong X^{\vee\vee} = D(X^\vee)$ ($X \in \mathcal{C}$), $D$ is essentially surjective. Hence, $D$ is an equivalence. The converse is clear, since an equivalence of categories is fully faithful.
\end{proof}

Following \cite{MR2520968}, we introduce duality preserving functors and related notions:

\begin{definition}
  Let $\mathcal{C}$ and $\mathcal{D}$ be categories with duality. A {\em duality preserving functor} from $\mathcal{C}$ to $\mathcal{D}$ is a pair $(F, \xi)$ consisting of a functor $F: \mathcal{C} \to \mathcal{D}$ and a natural transformation $\xi: F(X^\vee) \to F(X)^\vee$ ($X \in \mathcal{C}$) making
  \begin{equation}
    \label{eq:duality-2}
    \begin{CD}
      F(X) @>{F(j_X)}>> F(X^{\vee\vee}) \\
      @V{j_{F(X)}}VV @VV{\xi_{X^\vee}}V \\
      F(X)^{\vee\vee} @>>{\xi_X^{\vee}}> F(X^\vee)^\vee
    \end{CD}
  \end{equation}
  commute for all $X \in \mathcal{C}$. If, moreover, $\xi$ is an isomorphism, then $(F, \xi)$ is said to be a {\em strong}. If $\xi$ is the identity, then $(F, \xi)$ is said to be {\em strict}.

  Now let $(F, \xi), (G, \zeta): \mathcal{C} \to \mathcal{D}$ be such functors. A {\em morphism of duality preserving functors} from $(F, \xi)$ to $(G, \zeta)$ is a natural transformation $h: F \to G$ making
  \begin{equation*}
    \begin{CD}
      F(X^\vee) @>{\xi_X}>> F(X)^\vee \\
      @V{h_{X^\vee}}VV @AA{h_X^\vee}A \\
      G(X^\vee) @>>{\zeta_X}> G(X)^\vee
    \end{CD}
  \end{equation*}
  commute for all $X \in \mathcal{C}$.
\end{definition}

If $(F, \xi): \mathcal{C} \to \mathcal{D}$ and $(G, \zeta): \mathcal{D} \to \mathcal{E}$ are duality preserving functors between categories with duality, then the composition $G \circ F: \mathcal{C} \to \mathcal{E}$ becomes a duality preserving functor with
\begin{equation*}
  \begin{CD}
    G(F(X^\vee)) @>{G(\xi_X)}>> G(F(X)^\vee) @>{\zeta_{F(X)}}>> G(F(X))^\vee
    @. \quad (X \in \mathcal{C})
  \end{CD}
\end{equation*}
One can check that categories with duality form a 2-category; 1-arrows are duality preserving functors and 2-arrows are morphisms of duality preserving functors. Hence we can define an {\em isomorphism} and an {\em equivalence} of categories with duality in the usual way.

Given a duality preserving functor $(F, \xi): \mathcal{C} \to \mathcal{D}$, we define
\begin{equation*}
  \tilde{F}_{X,Y}: \Hom_\mathcal{C}(X, Y^\vee) \to \Hom_\mathcal{D}(F(X), F(Y)^\vee)
  \quad (X, Y \in \mathcal{C})
\end{equation*}
by $\tilde{F}_{X,Y}(f) = \xi_Y \circ F(f)$ for $f: X \to Y^\vee$. $\tilde{F}$ is compatible with the transposition map in the sense that the diagram
\begin{equation}
  \label{eq:transposition-4}
  \begin{CD}
    \Hom_\mathcal{C}(X, Y^\vee) @>{\tilde{F}_{X,Y}}>>
    \Hom_\mathcal{D}(F(X), F(Y)^\vee) \\
    @V{\Trans_{X,Y}}VV @VV{\Trans_{F(X),F(Y)}}V \\
    \Hom_\mathcal{C}(Y, X^\vee) @>>{\tilde{F}_{X,Y}}>
    \Hom_\mathcal{D}(F(Y), F(X)^\vee)
  \end{CD}
\end{equation}
commutes for all $X, Y \in \mathcal{C}$. Indeed, we have
\begin{align*}
  & \Trans_{F(X),F(Y)} (\tilde{F}_{X,Y}(f))
  = (\xi_Y \circ F(f))^\vee \circ j_{F(Y)}
  = F(f)^\vee \circ \xi_Y^\vee \circ j_{F(Y)} \\
  & \qquad = F(f)^\vee \circ \xi_{X^\vee} \circ {F(j_X)}
  = \xi_Y \circ F(f^\vee) \circ F(j_X)
  = \tilde{F}_{X,Y} (\Trans_{X,Y}(f))
\end{align*}

Now suppose that $\mathcal{C}$ is a category with strong duality. Then:

\begin{lemma}
  \label{lem:duality-op}
  $\mathcal{C}^{\op}$ is a category with duality with the same duality functor as $\mathcal{C}$ and pivotal morphism $(j^{-1})^{\op}$. The duality functor on $\mathcal{C}$ is an equivalence of categories with duality between $\mathcal{C}$ and $\mathcal{C}^{\op}$.
\end{lemma}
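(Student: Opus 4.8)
The plan is to reduce every requirement to the single identity \eqref{eq:duality-1} for $\mathcal{C}$, by systematically reading morphisms, duals, and composites of $\mathcal{C}^{\op}$ back inside $\mathcal{C}$ (where the variance of $(-)^\vee$ and the direction of $j$ get reversed) and using that $(-)^\vee$ is a functor, so $(f^{-1})^\vee=(f^\vee)^{-1}$.

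For the first assertion: the contravariant functor $(-)^\vee$ of $\mathcal{C}$ also defines a contravariant endofunctor of $\mathcal{C}^{\op}$, still sending $X\mapsto X^\vee$, and the double-dual endofunctor induced on $\mathcal{C}^{\op}$ is $\big((-)^{\vee\vee}\big)^{\op}$. Since $\mathcal{C}$ is strong, $j$ is invertible, so $j^{-1}\colon(-)^{\vee\vee}\to\id_\mathcal{C}$ is a natural transformation in $\mathcal{C}$; passing to opposite categories reverses its direction, giving a natural transformation $(j^{-1})^{\op}\colon\id_{\mathcal{C}^{\op}}\to(-)^{\vee\vee}$ in $\mathcal{C}^{\op}$, which is again an isomorphism. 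It then remains to verify \eqref{eq:duality-1} for the triple $(\mathcal{C}^{\op},(-)^\vee,(j^{-1})^{\op})$. Unwinding the duality functor and the composition of $\mathcal{C}^{\op}$ into $\mathcal{C}$, its left-hand side becomes $j^{-1}_{X^\vee}\circ(j^{-1}_X)^\vee=j^{-1}_{X^\vee}\circ\big((j_X)^\vee\big)^{-1}$, which is the inverse of $(j_X)^\vee\circ j_{X^\vee}$; by \eqref{eq:duality-1} for $\mathcal{C}$ this inverse is $\id_{X^\vee}$. Hence $\mathcal{C}^{\op}$ is a category with duality (in fact with strong duality).

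For the second assertion I would equip the duality functor $D\colon\mathcal{C}\to\mathcal{C}^{\op}$, $X\mapsto X^\vee$, with the identity natural transformation as its duality-preserving structure $\xi$, and check that $(D,\id)$ is a strict---hence strong---duality preserving functor from $\mathcal{C}$ to $\mathcal{C}^{\op}$: with $\xi=\id$ the coherence square \eqref{eq:duality-2} collapses to the requirement $D(j_X)=(j^{-1})^{\op}_{D(X)}$ in $\mathcal{C}^{\op}$, which translated into $\mathcal{C}$ reads $(j_X)^\vee=j^{-1}_{X^\vee}$, i.e.\ \eqref{eq:duality-1} once more (using that $j_{X^\vee}$ is invertible). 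By Lemma~\ref{lem:duality-strong} the underlying functor $D$ is an equivalence of categories. To upgrade $(D,\id)$ to an equivalence of categories with duality, I would apply the previous paragraph to the strong category $\mathcal{C}^{\op}$ to obtain the strict duality preserving functor $(D^{\op},\id)\colon\mathcal{C}^{\op}\to(\mathcal{C}^{\op})^{\op}=\mathcal{C}$. Since $\xi=\zeta=\id$, the composition rule for duality-preserving structures makes the two composites equal to $\big((-)^{\vee\vee},\id\big)$ on $\mathcal{C}$ and on $\mathcal{C}^{\op}$ respectively, and one checks that $j\colon\id_\mathcal{C}\to(-)^{\vee\vee}$ and $(j^{-1})^{\op}\colon\id_{\mathcal{C}^{\op}}\to(-)^{\vee\vee}$ are morphisms of duality preserving functors: the square defining the first reduces to $(j_X)^\vee\circ j_{X^\vee}=\id_{X^\vee}$, and the one for the second to the opposite version established in the first step. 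As $j$ and $(j^{-1})^{\op}$ are isomorphisms, they are $2$-isomorphisms in the $2$-category of categories with duality, exhibiting $(D,\id)$ and $(D^{\op},\id)$ as mutually quasi-inverse; in particular $D$ is an equivalence of categories with duality between $\mathcal{C}$ and $\mathcal{C}^{\op}$.

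The only genuine work is the bookkeeping in passing between $\mathcal{C}$ and $\mathcal{C}^{\op}$---keeping track of which $(-)^\vee$, which composition, and which direction of $j$ is meant at each step. After each such translation the compatibility to be checked is always a restatement of \eqref{eq:duality-1}, so no new computation is needed; I expect the main obstacle to be simply carrying out this op-bookkeeping consistently, without inverting or reversing one map too many.
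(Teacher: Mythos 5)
Your proof is correct, and it is essentially the verification the paper leaves implicit: the lemma is stated there without proof, and the paper's subsequent use of $(F,\xi)=((-)^\vee,\id_{(-)^\vee}):\mathcal{C}^{\op}\to\mathcal{C}$ in \eqref{eq:transposition-5} confirms that your choice of the identity duality-preserving structure on $D$ and $D^{\op}$ is the intended one. Each of your op-translations (the identity \eqref{eq:duality-1} for $(j^{-1})^{\op}$, the collapse of \eqref{eq:duality-2} to $(j_X)^\vee=j_{X^\vee}^{-1}$, and the checks that $j$ and $(j^{-1})^{\op}$ are $2$-isomorphisms exhibiting the quasi-inverse) is accurate and uses the strongness hypothesis exactly where it is needed.
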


Hence, from~\eqref{eq:transposition-4} with $(F, \xi) = ((-)^\vee, \id_{(-)^\vee}): \mathcal{C}^{\op} \to \mathcal{C}$, we see that
\begin{equation}
  \label{eq:transposition-5}
  \begin{CD}
    \Hom_{\mathcal{C}}(X^\vee, Y) @=
    \Hom_{\mathcal{C}^{\op}}(Y^{\op}, (X^\vee)^{\op}) @>{(-)^\vee}>>
    \Hom_{\mathcal{C}}(Y^\vee, X^{\vee\vee}) \\
    @V{\Trans_{X,Y}^{\op}}VV @. @VV{\Trans_{Y^\vee, X^\vee}}V \\
    \Hom_{\mathcal{C}}(Y^\vee, X) @=
    \Hom_{\mathcal{C}^{\op}}(X^{\op}, (Y^\vee)^{\op}) @>>{(-)^\vee}>
    \Hom_{\mathcal{C}}(X^\vee, Y^{\vee\vee})
  \end{CD}
\end{equation}
commutes for all $X, Y \in \mathcal{C}$, where $\Trans_{X,Y}^{\op}$ is the transposition map for $\mathcal{C}^{\op}$ regarded as a map $\Hom_{\mathcal{C}}(X^\vee, Y) \to \Hom_{\mathcal{C}}(Y^\vee, X)$. Explicitly, it is given by $\Trans_{X,Y}^{\op}(f) = j_X^{-1} \circ f^\vee$.

\begin{remark}
  \label{rem:transposition-and-E-map}
  If $\mathcal{C}$ is a pivotal monoidal category, then there is a natural bijection
  \begin{equation*}
    \Hom_\mathcal{C}(X^{\vee}, Y) \cong \Hom_\mathcal{C}(\mathbf{1}, X \otimes Y)
    \quad (X, Y \in \mathcal{C}),
  \end{equation*}
  where $\otimes$ is the tensor product of $\mathcal{C}$ and $\mathbf{1} \in \mathcal{C}$ is the unit object. The diagram
  \begin{equation*}
    \begin{CD}
      \Hom_\mathcal{C}(X^{\vee},X) @>{\cong}>> \Hom_\mathcal{C}(\mathbf{1}, X \otimes X) \\
      @V{\Trans^{\op}_{X,X}}VV @VV{E_X^{(2)}}V \\
      \Hom_\mathcal{C}(X^{\vee},X) @>>{\cong}> \Hom_\mathcal{C}(\mathbf{1}, X \otimes X)
    \end{CD}
  \end{equation*}
  commutes, where the horizontal arrows are the canonical bijections and $E_X^{(2)}$ is the map used in \cite{MR2381536} to define the FS indicator.
\end{remark}

\subsection{Frobenius--Schur Indicator}

Recall that a category $\mathcal{C}$ is said to be {\em $k$-linear} if each hom-set is a vector space over $k$ and the composition of morphisms is $k$-bilinear. A functor $F: \mathcal{C} \to \mathcal{D}$ between $k$-linear categories is said to be {\em $k$-linear} if the map $\Hom_\mathcal{C}(X, Y) \to \Hom_\mathcal{D}(F(X), F(Y))$, $f \mapsto F(f)$ is $k$-linear for all $X, Y \in \mathcal{C}$. Note that $\mathcal{C}^{\op}$ is $k$-linear if $\mathcal{C}$ is. Thus the $k$-linearity of a contravariant functor makes sense.

\begin{definition}
  By a {\em category with duality over $k$}, we mean a $k$-linear category with duality whose duality functor is $k$-linear.
\end{definition}

For simplicity, in this section, we always assume that a category $\mathcal{C}$ with duality over $k$ satisfies the following finiteness condition:
\begin{equation}
  \label{eq:assumption-fin-dim}
  \dim_k \Hom_\mathcal{C}(X, Y) < \infty \quad \text{for all $X, Y \in \mathcal{C}$}
\end{equation}

\begin{definition}
  \label{def:FS-ind}
  Let $\mathcal{C}$ be a category with duality over $k$. The {\em Frobenius--Schur indicator} (or FS indicator, for short) of $X \in \mathcal{C}$ is defined and denoted by $\nu(X) = \Trace(\Trans_{X,X})$, where $\Trace$ means the trace of a linear map.
\end{definition}

The following is a list of basic properties of the FS indicator:

\begin{proposition}
  \label{prop:FS-ind-basic}
  Let $\mathcal{C}$ be a category with duality over $k$ and let $X \in \mathcal{C}$.

  {\rm (a)} $\nu(X)$ depends on the isomorphism class of $X \in \mathcal{C}$.

  {\rm (b)} $\nu(X) = \dim_k B^+_\mathcal{C}(X) - \dim_k B_\mathcal{C}^-(X)$, where
  \begin{equation*}
    B_\mathcal{C}^{\pm}(X) = \{ b: X \to X^\vee \mid \Trans_{X,X}(b) = \pm b \}.
  \end{equation*}

  {\rm (c)} Let $X_1, X_2 \in \mathcal{C}$. If their biproduct $X_1 \oplus X_2$ exists, then we have
  \begin{equation*}
    \nu(X_1 \oplus X_2) = \nu(X_1) + \nu(X_2).
  \end{equation*}
\end{proposition}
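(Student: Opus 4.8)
The plan is to prove the three parts in order, using the explicit formula $\Trans_{X,X}(f) = f^\vee \circ j_X$ from \eqref{eq:transposition-2} and the involutivity \eqref{eq:transposition-3} throughout.

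For part (a), suppose $\phi: X \to Y$ is an isomorphism in $\mathcal{C}$. I would show that $\Trans_{X,X}$ and $\Trans_{Y,Y}$ are conjugate linear maps, hence have the same trace. The natural candidate for the conjugating isomorphism is the $k$-linear bijection $\Hom_\mathcal{C}(X, X^\vee) \to \Hom_\mathcal{C}(Y, Y^\vee)$ sending $b \mapsto (\phi^{-1})^\vee \circ b \circ \phi^{-1}$; one checks it intertwines the two transposition maps using naturality of $j$ (i.e.\ $j_Y \circ \phi = \phi^{\vee\vee} \circ j_X$) and functoriality of $(-)^\vee$. This is a short diagram chase.

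For part (b), since $\Trans_{X,X}$ is an involution of the finite-dimensional vector space $V := \Hom_\mathcal{C}(X, X^\vee)$ by \eqref{eq:transposition-3}, and $\operatorname{char} k \ne 2$, the space $V$ decomposes as $V = V^+ \oplus V^-$ into the $(+1)$- and $(-1)$-eigenspaces, with $V^\pm = B_\mathcal{C}^\pm(X)$ by definition. The trace of an involution is then $\dim_k V^+ - \dim_k V^-$, giving the claimed formula immediately. (The projections $\tfrac12(\id \pm \Trans_{X,X})$ witness the eigenspace decomposition and this is where the characteristic hypothesis is used.)

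For part (c), I would use the biproduct structure: fix the structure maps $\iota_m: X_m \to X_1 \oplus X_2$ and $\pi_m: X_1 \oplus X_2 \to X_m$ with $\pi_m \iota_n = \delta_{mn}\,\id$ and $\iota_1\pi_1 + \iota_2\pi_2 = \id$. Because $(-)^\vee$ is an additive (indeed $k$-linear) contravariant functor, it carries a biproduct to a biproduct, so $(X_1 \oplus X_2)^\vee \cong X_1^\vee \oplus X_2^\vee$ with structure maps $\pi_m^\vee$ and $\iota_m^\vee$. Hence $\Hom_\mathcal{C}(X_1\oplus X_2, (X_1\oplus X_2)^\vee)$ decomposes as the direct sum of the four blocks $\Hom_\mathcal{C}(X_n, X_m^\vee)$, and I would compute the matrix of $\Trans_{X_1\oplus X_2, X_1\oplus X_2}$ in this block decomposition. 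Using \eqref{eq:transposition-2} together with naturality of $j$ applied to $\iota_m$ and $\pi_m$, one finds that $\Trans$ maps the $(n,m)$-block to the $(m,n)$-block; in fact the $(1,1)$- and $(2,2)$-blocks are carried to themselves by $\Trans_{X_1,X_1}$ and $\Trans_{X_2,X_2}$ respectively, while the off-diagonal blocks $\Hom_\mathcal{C}(X_1,X_2^\vee)$ and $\Hom_\mathcal{C}(X_2,X_1^\vee)$ are swapped. A linear map that swaps two complementary subspaces has trace zero on their sum, so only the diagonal blocks contribute, yielding $\Trace(\Trans_{X_1\oplus X_2,X_1\oplus X_2}) = \Trace(\Trans_{X_1,X_1}) + \Trace(\Trans_{X_2,X_2})$, which is exactly $\nu(X_1)+\nu(X_2)$.

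The main obstacle is bookkeeping rather than conceptual: in part (c) one must be careful that the identification of $(X_1\oplus X_2)^\vee$ with $X_1^\vee\oplus X_2^\vee$ is compatible with the pivotal morphism $j$ (so that the formula \eqref{eq:transposition-2} can be applied blockwise), and that the off-diagonal blocks are genuinely interchanged rather than merely permuted up to the diagonal — but this follows cleanly from naturality of $j$ and the biproduct relations. Parts (a) and (b) are essentially formal consequences of \eqref{eq:transposition-3} and the $k$-linearity hypotheses.
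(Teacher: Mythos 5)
Your proposal is correct and follows essentially the same route as the paper: conjugation by the isomorphism plus naturality for (a), the eigenspace decomposition of the involution $\Trans_{X,X}$ (using $\operatorname{char}k\ne 2$) for (b), and the four-block decomposition of $\Hom_{\mathcal C}(X_1\oplus X_2,(X_1\oplus X_2)^\vee)$ with the observation that $\Trans$ preserves the diagonal blocks and swaps the off-diagonal ones for (c). The paper phrases (c) via the corner maps $\Trans_{ab}^{cd}=p_{cd}\circ\Trans\circ i_{ab}$ and naturality of the transposition map rather than explicitly invoking that $(-)^\vee$ preserves biproducts, but the computation is the same.
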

\begin{proof}
  (a) Let $p: X \to Y$ be an isomorphism in $\mathcal{C}$. Then
  \begin{equation*}
    \Hom_\mathcal{C}(p,p^\vee): \Hom_\mathcal{C}(Y, Y^\vee) \to \Hom_\mathcal{C}(X, X^\vee),
    \quad f \mapsto p^\vee \circ f \circ p
  \end{equation*}
  is an isomorphism. By the naturality of the transposition map, the diagram
  \begin{equation*}
    \begin{CD}
      \Hom_\mathcal{C}(Y, Y^\vee) @>{\Trans_{Y,Y}}>> \Hom_\mathcal{C}(Y, Y^\vee) \\
      @V{\Hom_\mathcal{C}(p,p^\vee)}VV @VV{\Hom_\mathcal{C}(p,p^\vee)}V \\
      \Hom_\mathcal{C}(X, X^\vee) @>>{\Trans_{X,X}}> \Hom_\mathcal{C}(X, X^\vee)
    \end{CD}
  \end{equation*}
  commutes. Hence, we have $\nu(X) = \Trace(\Trans_{X,X}) = \Trace(\Trans_{Y,Y}) = \nu(Y)$.

  (b) The result follows from~\eqref{eq:transposition-3} and the fact that the trace of an operator is the sum of its eigenvalues.

  (c) For $a = 1, 2$, let $i_a: X_a \to X_1 \oplus X_2$ and $p_a: X_1 \oplus X_2 \to X_a$ be the inclusion and the projection, respectively. For $a, b, c, d = 1, 2$, we set
  \begin{equation*}
    \Trans_{a b}^{c d}
    = p_{c d} \circ \Trans_{X_1 \oplus X_2, X_1 \oplus X_2} \circ i_{a b}:
    \Hom_\mathcal{C}(X_a, X_b^\vee) \to \Hom_\mathcal{C}(X_c, X_d^\vee)
  \end{equation*}
  where $i_{a b} = \Hom_\mathcal{C}(p_a^{}, p_b^\vee)$ and $p_{c d} = \Hom_\mathcal{C}(i_c^{}, i_d^\vee)$. By linear algebra, we have
  \begin{equation}
    \label{eq:FS-ind-additive-proof-1}
    \Trace(\Trans_{X_1 \oplus X_2, X_1 \oplus X_2})
    = \Trace(\Trans_{1 1}^{1 1}) + \Trace(\Trans_{1 2}^{1 2}) + \Trace(\Trans_{2 1}^{2 1}) + \Trace(\Trans_{2 2}^{2 2})
  \end{equation}
  Now, by the naturality of the transposition map, we compute
  \begin{align*}
    \Trans_{a b}^{a b}
    & = \Hom_\mathcal{C}(i_a, i_b^\vee) \circ \Trans_{X_1 \oplus X_2, X_1 \oplus X_2}
    \circ \Hom_\mathcal{C}(p_a^{}, p_b^\vee) \\
    & = \Hom_\mathcal{C}(i_a, i_b^\vee)
    \circ \Hom_\mathcal{C}(p_b^{}, p_a^\vee) \circ \Trans_{X_a, X_b} \\
    & = \Hom_{\mathcal{C}}(p_b^{} \circ i_a^{}, p_a^\vee \circ i_b^\vee) \circ \Trans_{X_a, X_b}
  \end{align*}
  Hence $\Trans_{a b}^{a b}$ is equal to $\Trans_{X_a, X_a}$ if $a = b$ and is zero if otherwise. Combining this result with~\eqref{eq:FS-ind-additive-proof-1}, we obtain $\nu(X_1 \oplus X_2) = \nu(X_1) + \nu(X_2)$.
\end{proof}

The FS indicator is an invariant of categories with duality over $k$. Indeed, the commutativity of \eqref{eq:transposition-4} yields the following proposition:

\begin{proposition}
  \label{prop:FS-ind-inv}
  Let $(F, \xi): \mathcal{C} \to \mathcal{D}$ be a strong duality preserving functor. If $F$ is $k$-linear and fully faithful, then we have $\nu(F(X)) = \nu(X)$ for all $X \in \mathcal{C}$.
\end{proposition}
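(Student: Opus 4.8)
The plan is to read off from the compatibility square \eqref{eq:transposition-4}, specialized to $Y = X$, that the two endomorphisms $\Trans_{X,X}$ and $\Trans_{F(X),F(X)}$ are intertwined by the map $\tilde{F}_{X,X}$, and then to observe that under the stated hypotheses $\tilde{F}_{X,X}$ is a linear isomorphism, so that the two transposition maps are conjugate and hence have the same trace.

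\medskip
Concretely, first I would put $Y = X$ in \eqref{eq:transposition-4}. The commutativity of that diagram then reads
\begin{equation*}
  \Trans_{F(X),F(X)} \circ \tilde{F}_{X,X} = \tilde{F}_{X,X} \circ \Trans_{X,X}
\end{equation*}
as an identity of $k$-linear maps $\Hom_\mathcal{C}(X, X^\vee) \to \Hom_\mathcal{D}(F(X), F(X)^\vee)$.

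\medskip
Next I would check that $\tilde{F}_{X,X}$ is a $k$-linear isomorphism. By definition $\tilde{F}_{X,X}(f) = \xi_X \circ F(f)$, so $\tilde{F}_{X,X}$ is the composite of the map
\begin{equation*}
  \Hom_\mathcal{C}(X, X^\vee) \to \Hom_\mathcal{D}(F(X), F(X^\vee)), \quad f \mapsto F(f),
\end{equation*}
followed by post-composition with $\xi_X$, namely $\Hom_\mathcal{D}(F(X), F(X^\vee)) \to \Hom_\mathcal{D}(F(X), F(X)^\vee)$. The first map is $k$-linear because $F$ is $k$-linear, and it is bijective because $F$ is fully faithful. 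The second map is $k$-linear, and it is bijective because $(F, \xi)$ is strong, i.e.\ $\xi_X \colon F(X^\vee) \to F(X)^\vee$ is an isomorphism. Hence $\tilde{F}_{X,X}$ is a $k$-linear isomorphism, so from the displayed identity we get
\begin{equation*}
  \Trans_{F(X),F(X)} = \tilde{F}_{X,X} \circ \Trans_{X,X} \circ \tilde{F}_{X,X}^{-1}.
\end{equation*}
Since conjugate endomorphisms of a finite-dimensional vector space (the relevant $\Hom$-spaces are finite-dimensional by~\eqref{eq:assumption-fin-dim}) have equal trace, we conclude $\nu(F(X)) = \Trace(\Trans_{F(X),F(X)}) = \Trace(\Trans_{X,X}) = \nu(X)$.

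\medskip
There is essentially no hard step here; the one point that must not be skipped is the verification that $\tilde{F}_{X,X}$ is bijective, and this is precisely where both hypotheses are used, full faithfulness of $F$ for the bijectivity of $f \mapsto F(f)$ and strongness of $(F,\xi)$ for the bijectivity of post-composition with $\xi_X$. Everything else is bookkeeping with the already-established diagram \eqref{eq:transposition-4}.
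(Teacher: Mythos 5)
Your proposal is correct and is exactly the argument the paper intends: the paper derives this proposition directly from the commutativity of \eqref{eq:transposition-4}, and your write-up simply makes explicit the two points the paper leaves implicit, namely that $\tilde{F}_{X,X}$ is a $k$-linear isomorphism (using full faithfulness and strongness) and that conjugate endomorphisms have equal trace.
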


Similarly, we obtain the following proposition from~\eqref{eq:transposition-5}:

\begin{proposition}
  \label{prop:FS-ind-dual}
  Suppose that $\mathcal{C}$ is a category with strong duality over $k$. Then, for all $X \in \mathcal{C}$, we have $\nu(X^\vee) = \Trace(\Trans^\op_{X,X}) = \nu(X^{\op})$.
\end{proposition}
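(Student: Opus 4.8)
The plan is to read the statement off the commuting square~\eqref{eq:transposition-5}. First I would pin down the meaning of $\nu(X^{\op})$. Since $\mathcal{C}$ is strong, Lemma~\ref{lem:duality-op} makes $\mathcal{C}^{\op}$ a category with duality, and it is $k$-linear with $k$-linear duality functor (the functor is literally the same as that of $\mathcal{C}$), while the finiteness condition~\eqref{eq:assumption-fin-dim} for $\mathcal{C}^{\op}$ is immediate from $\Hom_{\mathcal{C}^{\op}}(A,B)=\Hom_\mathcal{C}(B,A)$. Hence Definition~\ref{def:FS-ind} applies to $X^{\op}\in\mathcal{C}^{\op}$, and $\nu(X^{\op})$ is the trace of the transposition map of $\mathcal{C}^{\op}$ at $X^{\op}$; under the identification $\Hom_{\mathcal{C}^{\op}}(X^{\op},(X^{\op})^\vee)=\Hom_\mathcal{C}(X^\vee,X)$ this is exactly the operator $\Trans^{\op}_{X,X}$ appearing in the discussion after~\eqref{eq:transposition-5}. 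So the equality $\Trace(\Trans^{\op}_{X,X})=\nu(X^{\op})$ is a matter of definition-chasing, and the real content is the identity $\nu(X^\vee)=\Trace(\Trans^{\op}_{X,X})$.

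For that, I would specialize diagram~\eqref{eq:transposition-5} to $Y=X$. The right-hand vertical arrow then becomes $\Trans_{X^\vee,X^\vee}\colon\Hom_\mathcal{C}(X^\vee,X^{\vee\vee})\to\Hom_\mathcal{C}(X^\vee,X^{\vee\vee})$, whose trace is by definition $\nu(X^\vee)$; the left-hand vertical arrow is $\Trans^{\op}_{X,X}\colon\Hom_\mathcal{C}(X^\vee,X)\to\Hom_\mathcal{C}(X^\vee,X)$; and both horizontal composites collapse to the single map $\Psi\colon f\mapsto f^\vee$, $\Hom_\mathcal{C}(X^\vee,X)\to\Hom_\mathcal{C}(X^\vee,X^{\vee\vee})$. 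The key point is that $\Psi$ is a $k$-linear isomorphism: it is the map on hom-sets induced by the duality functor, which is $k$-linear by hypothesis, and it is bijective because that functor is an equivalence $\mathcal{C}\to\mathcal{C}^{\op}$ --- this is precisely where strongness enters, through Lemma~\ref{lem:duality-strong}.

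Given this, commutativity of the specialized square reads $\Psi\circ\Trans^{\op}_{X,X}=\Trans_{X^\vee,X^\vee}\circ\Psi$, so $\Trans_{X^\vee,X^\vee}$ and $\Trans^{\op}_{X,X}$ are conjugate by the $k$-linear isomorphism $\Psi$ and therefore have equal traces. Hence $\nu(X^\vee)=\Trace(\Trans^{\op}_{X,X})$, and combining with the first paragraph yields $\nu(X^\vee)=\Trace(\Trans^{\op}_{X,X})=\nu(X^{\op})$.

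I do not expect a genuine obstacle: the argument is formal once~\eqref{eq:transposition-5} is available. The only points requiring care are the bookkeeping of which hom-set lives in $\mathcal{C}$ and which in $\mathcal{C}^{\op}$, and the observation that strongness is exactly what makes $\Psi$ a bijection (without it one gets only the intertwining relation, not equality of traces). As an alternative route that avoids~\eqref{eq:transposition-5} altogether, one can note that by Lemma~\ref{lem:duality-op} the pair $((-)^\vee,\id)\colon\mathcal{C}^{\op}\to\mathcal{C}$ is a strong, $k$-linear, fully faithful duality preserving functor sending $X^{\op}$ to $X^\vee$, so Proposition~\ref{prop:FS-ind-inv} gives $\nu(X^\vee)=\nu(X^{\op})$ at once, and $\nu(X^{\op})=\Trace(\Trans^{\op}_{X,X})$ is again just the definition.
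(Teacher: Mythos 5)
Your proof is correct and takes essentially the same route as the paper, which derives this proposition directly from the commuting square~\eqref{eq:transposition-5}: specializing to $Y=X$, observing that both horizontal composites are the single $k$-linear bijection $f\mapsto f^\vee$ (bijective by strongness via Lemma~\ref{lem:duality-strong}), and concluding equality of traces by conjugation. Your alternative via Lemma~\ref{lem:duality-op} and Proposition~\ref{prop:FS-ind-inv} is just a packaged form of the same argument, since \eqref{eq:transposition-5} is itself the instance of \eqref{eq:transposition-4} for the duality functor regarded as a duality preserving equivalence $\mathcal{C}^{\op}\to\mathcal{C}$.
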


Let $\mathcal{A}$ be a $k$-linear Abelian category. Recall that a nonzero object of $\mathcal{A}$ is said to be {\em simple} if it has no proper subobjects. We say that a simple object $V \in \mathcal{A}$ is {\em absolutely simple} if $\End_{\mathcal{A}}(V) \cong k$. Note that the opposite category $\mathcal{A}^{\op}$ is also $k$-linear and Abelian. It is easy to see that an object of $\mathcal{A}$ is (absolutely) simple if and only if it is (absolutely) simple as an object of $\mathcal{A}^{\op}$.

\begin{proposition}
  \label{prop:FS-ind-abs-simple}
  Let $\mathcal{C}$ be an Abelian category with strong duality over $k$, and let $X \in \mathcal{C}$. \\
  \indent {\rm (a)} If $X$ is a finite biproduct of simple objects, then $\nu(X) = \nu(X^{\vee})$. \\
  \indent {\rm (b)} If $X$ is absolutely simple, then $\nu(X) \in \{ 0, \pm 1 \}$. $\nu(X) \ne 0$ if and only if $X$ is self-dual, that is, $X$ is isomorphic to $X^{\vee}$.
\end{proposition}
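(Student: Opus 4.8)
The plan is to reduce everything to Schur's lemma together with the involutivity relation~\eqref{eq:transposition-3}, once we have observed that the duality functor of $\mathcal{C}$ preserves simplicity. For the latter: by Lemma~\ref{lem:duality-strong} the functor $(-)^\vee \colon \mathcal{C} \to \mathcal{C}^{\op}$ is an equivalence, an equivalence of Abelian categories carries simple objects to simple objects, and (as noted just before the proposition) an object is simple in $\mathcal{C}$ iff it is simple in $\mathcal{C}^{\op}$; hence $V^\vee$ is simple whenever $V$ is. Moreover $(-)^\vee$ is $k$-linear, hence preserves finite biproducts (the biproduct relations among inclusions and projections are transported, with the roles of inclusions and projections exchanged), so the dual of a finite biproduct of simple objects is again a finite biproduct of simple objects.

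For part~(a), write $X \cong V_1 \oplus \dots \oplus V_n$ with each $V_i$ simple, so that $X^\vee \cong V_1^\vee \oplus \dots \oplus V_n^\vee$ with each $V_i^\vee$ simple. By Proposition~\ref{prop:FS-ind-basic}(a),(c) it suffices to prove $\nu(V) = \nu(V^\vee)$ for a single simple object $V$. If $V \cong V^\vee$, this is Proposition~\ref{prop:FS-ind-basic}(a). If $V \not\cong V^\vee$, then $V$ and $V^\vee$ are non-isomorphic simple objects, so $\Hom_\mathcal{C}(V, V^\vee) = 0$ by Schur's lemma; hence $\Trans_{V,V} = 0$ and $\nu(V) = 0$. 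Applying the same reasoning to $V^\vee$, whose double dual $V^{\vee\vee} \cong V$ is not isomorphic to $V^\vee$, gives $\nu(V^\vee) = 0$ as well. In every case $\nu(V) = \nu(V^\vee)$, and summing over $i$ completes~(a).

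For part~(b), regard $\Trans_{X,X}$ as a linear endomorphism of $W := \Hom_\mathcal{C}(X, X^\vee)$. If $X$ is not self-dual, then $X \not\cong X^\vee$ with both simple, so $W = 0$ by Schur's lemma and $\nu(X) = \Trace(\Trans_{X,X}) = 0$. If $X$ is self-dual, fix an isomorphism $\phi \colon X \to X^\vee$; then $f \mapsto \phi^{-1} \circ f$ identifies $W$ with $\End_\mathcal{C}(X)$, which is $\cong k$ because $X$ is absolutely simple, so $\dim_k W = 1$ and $\Trans_{X,X}$ acts on $W$ as multiplication by a scalar $\lambda \in k$. Relation~\eqref{eq:transposition-3} forces $\lambda^2 = 1$, and since the characteristic of $k$ is not two we get $\lambda = \pm 1$; therefore $\nu(X) = \Trace(\Trans_{X,X}) = \lambda \in \{\pm 1\}$, and in particular $\nu(X) \ne 0$. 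Combining the two cases proves~(b).

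There is no serious obstacle here; the points requiring care are that the duality functor genuinely preserves simplicity (handled by viewing it as an equivalence via Lemma~\ref{lem:duality-strong}) and that it is the hypothesis of \emph{absolute} simplicity, rather than mere simplicity, that pins $\Hom_\mathcal{C}(X, X^\vee)$ to be one-dimensional in the self-dual case, so that $\Trace(\Trans_{X,X})$ is literally the scalar $\pm 1$ and not the trace of a $\pm 1$-involution on a higher-dimensional module over a division algebra.
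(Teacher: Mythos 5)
Your proposal is correct and follows essentially the same route as the paper: reduce (a) to the single-simple case via additivity, use that the duality functor is an equivalence $\mathcal{C}\to\mathcal{C}^{\op}$ to see it preserves simplicity, apply Schur's lemma in the non-self-dual case, and in (b) use absolute simplicity to make $\Hom_{\mathcal{C}}(X,X^\vee)$ one-dimensional so that the involution $\Trans_{X,X}$ acts by $\pm 1$. Your added remark that the $k$-linear duality functor carries the biproduct decomposition of $X$ to one of $X^\vee$ is a point the paper's proof uses implicitly, and it is handled correctly.
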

\begin{proof}
  (a) We first claim that if $V \in \mathcal{C}$ is simple, then $\nu(V) = \nu(V^{\vee})$. Let $V \in \mathcal{C}$ be a simple object. Since $(-)^\vee: \mathcal{C} \to \mathcal{C}^{\op}$ is an equivalence, $V^\vee$ is simple as an object of $\mathcal{C}^{\op}$ and hence it is simple as an object of $\mathcal{C}$. If $V$ is isomorphic to $V^{\vee}$, then our claim is obvious. Otherwise, we have
  \begin{equation*}
    \Hom_\mathcal{C}(V, V^{\vee}) = 0 \text{\quad and \quad}
    \Hom_\mathcal{C}(V^\vee, V^{\vee\vee}) \cong \Hom_\mathcal{C}(V^{\vee}, V) = 0
  \end{equation*}
  by Schur's lemma. Therefore $\nu(V) = 0 = \nu(V^{\vee})$ follows.

  Now write $X$ as $X = V_1 \oplus \dotsb \oplus V_m$ for some simple objects $V_1, \dotsc, V_m \in \mathcal{C}$. By the above arguments and the additivity of the FS indicator, we have
  \begin{equation*}
    \nu(X^{\vee}) = \nu(V_1^{\vee}) + \dotsb + \nu(V_m^{\vee}) = \nu(V_1) + \dotsb + \nu(V_m) = \nu(X)
  \end{equation*}

  (b) Suppose that $X \in \mathcal{C}$ is absolutely simple. If $X$ is isomorphic to $X^{\vee}$, then
  \begin{equation*}
    \dim_k \Hom_\mathcal{C}(X, X^{\vee}) = \dim_k \End_{\mathcal{C}}(X) = 1
  \end{equation*}
  and hence $\nu(X)$ is either $+1$ or $-1$ by Proposition~\ref{prop:FS-ind-basic} (b). Otherwise, $\nu(X) = 0$ as we have seen in the proof of (a).
\end{proof}

\begin{remark}
  \label{rem:FS-higher}
  If $\mathcal{C}$ is a pivotal monoidal category over $k$, then the $n$-th FS indicator $\nu_n(X)$ of $X \in \mathcal{C}$ is defined for each integer $n \ge 2$; see \cite{MR2381536}. The commutativity of \eqref{eq:transposition-4} implies $\nu_2(X) = \nu(X^\vee)$ (see also Remark~\ref{rem:transposition-and-E-map}). However, in view of Proposition~\ref{prop:FS-ind-abs-simple}, $\nu(X) = \nu_2(X)$ always holds in the case where $\mathcal{C}$ is strong, Abelian, and semisimple. We prefer our Definition~\ref{def:FS-ind} since it is more convenient when we discuss the relation between the FS indicator and invariant bilinear forms.

  One would like to define higher FS indicators for an object of a category with duality over $k$ by extending that for an object of a pivotal monoidal category over $k$. This is impossible because of the following example: For a group $G$, we denote by $\Vect^G_\fdim$ the $k$-linear pivotal monoidal category of finite-dimensional $G$-graded vector spaces over $k$. The $n$-th FS indicator of $V = \bigoplus_{x \in G} V_x \in \Vect^G_\fdim$ is given by
  \begin{equation}
    \label{eq:FS-ind-G-gr}
    \nu_n(V) = \sum_{x \in G[n]} \dim_k(V_x),
    \text{\quad where $G[n] = \{ x \in G \mid x^n = 1 \}$}
  \end{equation}
  Now we put
  \begin{equation*}
    \mathcal{C} = \Vect_\fdim^{\mathbb{Z}_4 \times \mathbb{Z}_4}
    \text{\quad and \quad}
    \mathcal{D} = \Vect_\fdim^{\mathbb{Z}_2 \times \mathbb{Z}_8}
  \end{equation*}
  There exists a bijection $f: \mathbb{Z}_4 \times \mathbb{Z}_4 \to \mathbb{Z}_2 \times \mathbb{Z}_8$ such that $f(x^{-1}) = f(x)^{-1}$ for all $x \in \mathbb{Z}_4 \times \mathbb{Z}_4$. $f$ induces an equivalence $\mathcal{C} \approx \mathcal{D}$ of categories with duality over $k$. If we could define the $n$-th FS indicator for categories with duality over $k$, then there would exist at least one equivalence $F: \mathcal{C} \to \mathcal{D}$ such that $\nu_n(F(X)) = \nu_n(X)$ for all $X \in \mathcal{C}$ and $n \ge 2$. However, by \eqref{eq:FS-ind-G-gr}, there is no such equivalence.
\end{remark}

\subsection{Separable Functors}
\label{subsec:sep-functor}

Let $H$ be an involutory Hopf algebra, e.g., the group algebra of a group $G$. We consider the category $\mathcal{C} = \fdMod(H)$ of Example~\ref{ex:cat-w-dual-invol-Hopf}. The FS indicator $\nu(V)$ of $V \in \mathcal{C}$ is interpreted as follows: Let $\Bil_H(V)$ denote the set of all $H$-invariant bilinear forms on $V$. The transposition map induces
\begin{equation*}
  \Sigma_V: \Bil_H(V) \to \Bil_H(V),
  \quad \Sigma_V(b)(v, w) = b(w, v)
  \quad (b \in \Bil_H(V), v, w \in V)
\end{equation*}
via the canonical isomorphism $\Bil_H(V) \cong \Hom_H(V, V^\vee)$. Now let $\Bil_H^{\pm}(V)$ denote the eigenspace of $\Sigma_V$ with eigenvalue $\pm 1$. Then we have
\begin{equation*}
  \nu(V) = \Trace(\Trans_{V,V}) = \Trace(\Sigma_V) = \dim_k \Bil_H^+(V) - \dim_k \Bil_H^-(V)
\end{equation*}
Hence, from our definition, the relation between $\nu(V)$ and $H$-invariant bilinear forms on $V$ is clear. On the other hand, it is not obvious that $\nu(V)$ is expressed by a formula like \eqref{eq:FS-formula-cpt}. It should be emphasized that, from our point of view, \eqref{eq:FS-formula-cpt} is not the definition of $\nu(V)$ but rather a formula to compute $\nu(V)$. We note that a similar point of view is effectively used to derive a formula of the FS indicator for semisimple finite-dimensional quasi-Hopf algebras in \cite{MR2095575}.

A key notion to derive~\eqref{eq:FS-formula-cpt} is a {\em separable functor} \cite{MR1926102}; a functor $U: \mathcal{C} \to \mathcal{V}$ is said to be {\em separable} if there exists a natural transformation
\begin{equation*}
  \Pi_{X,Y}: \Hom_\mathcal{V}(U(X), U(Y)) \to \Hom_\mathcal{C}(X, Y) \quad (X, Y \in \mathcal{C})
\end{equation*}
such that $\Pi_{X,Y}(U(f)) = f$ for all $f \in \Hom_\mathcal{C}(X, Y)$. Such a natural transformation $\Pi$ is called a {\em section} of $U$. Suppose that $\mathcal{C}$ and $\mathcal{V}$ be $k$-linear. We say that a section $\Pi$ of $U$ is {\em $k$-linear} if $\Pi_{X,Y}$ is $k$-linear for all $X, Y \in \mathcal{C}$.

Now let $\mathcal{C}$ be a category with duality over $k$, and let $\mathcal{V}$ be a $k$-linear category satisfying \eqref{eq:assumption-fin-dim}. A $k$-linear functor $U: \mathcal{C} \to \mathcal{V}$ induces a $k$-linear map
\begin{equation*}
  U_{X,Y}: \Hom_\mathcal{C}(X, Y) \to \Hom_\mathcal{V}(U(X), U(Y))
  \quad f \mapsto U(f)
  \quad (X, Y \in \mathcal{C})
\end{equation*}
If $U$ has a $k$-linear section $\Pi$, we define a linear map
\begin{equation}
  \label{eq:transposition-sep}
  \widetilde{\Trans}_{X,Y}: \Hom_\mathcal{V}(U(X), U(Y^\vee)) \to \Hom_\mathcal{V}(U(Y), U(X^\vee))
  \quad (X, Y \in \mathcal{C})
\end{equation}
so that the diagram
\begin{equation*}
  \begin{CD}
    \Hom_\mathcal{V}(U(X), U(Y^\vee)) @>{\widetilde{\Trans}_{X,Y}}>> \Hom_\mathcal{V}(U(Y), U(X^\vee)) \\
    @V{\Pi_{X,Y^\vee}}VV @AA{U_{Y,X^\vee}}A \\
    \Hom_\mathcal{C}(X, Y^\vee) @>>{\Trans_{X,Y}}> \Hom_\mathcal{C}(Y, X^\vee)
  \end{CD}
\end{equation*}
commutes. By using the well-known identity $\Trace(A B) = \Trace(B A)$, we have
\begin{equation}
  \label{eq:FS-ind-sep}
  \begin{aligned}
    \Trace(\widetilde{\Trans}_{X,X})
    & = \Trace(U_{X,X^\vee} \circ \Trans_{X,X} \circ \Pi_{X, X^\vee}) \\
    & = \Trace(\Pi_{X, X^\vee} \circ U_{X,X^\vee} \circ \Trans_{X,X})
    = \Trace(\Trans_{X,X})
    = \nu(X)
  \end{aligned}
\end{equation}
Before we explain how \eqref{eq:FS-formula-cpt} is derived from \eqref{eq:FS-ind-sep}, we recall the following lemma in linear algebra: Let $f: V \to W$ and $g: W \to V$ be linear maps between finite-dimensional vector spaces. We define
\begin{equation*}
  T: V \otimes W \to V \otimes W,
  \quad T(v \otimes w) = g(w) \otimes f(v).
  \quad (v \in V, w \in W)
\end{equation*}

\begin{lemma}
  \label{lem:trace-1}
  $\Trace(T) = \Trace(f g)$.
\end{lemma}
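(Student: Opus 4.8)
The plan is to prove the identity by a direct computation in coordinates, after noting that both sides are well-defined: $fg \colon W \to W$ and $gf \colon V \to V$ are endomorphisms, and $\Trace(fg) = \Trace(gf)$ by the cyclic property of the trace already invoked above, so the statement is symmetric in $f$ and $g$.

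First I would fix a basis $\{e_1, \dots, e_m\}$ of $V$ and a basis $\{d_1, \dots, d_n\}$ of $W$, and write $f(e_i) = \sum_j F_{ji} d_j$ and $g(d_j) = \sum_i G_{ij} e_i$, so that $F = (F_{ji})$ and $G = (G_{ij})$ are the matrices of $f$ and $g$ with respect to these bases. Then $\{e_i \otimes d_j\}_{i,j}$ is a basis of $V \otimes W$, and from the defining formula $T(e_i \otimes d_j) = g(d_j) \otimes f(e_i) = \sum_{k, \ell} G_{kj} F_{\ell i}\, (e_k \otimes d_\ell)$. Reading off the coefficient of $e_i \otimes d_j$ on the right-hand side gives $G_{ij} F_{ji}$, and therefore $\Trace(T) = \sum_{i,j} G_{ij} F_{ji}$. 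On the other hand, $fg \colon W \to W$ sends $d_j \mapsto f(g(d_j)) = \sum_{i, \ell} G_{ij} F_{\ell i}\, d_\ell$, whose $d_j$-coefficient is $\sum_i G_{ij} F_{ji}$; hence $\Trace(fg) = \sum_j \sum_i G_{ij} F_{ji}$, which is exactly the same double sum. This proves $\Trace(T) = \Trace(fg)$.

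If a coordinate-free phrasing is preferred, one may observe that $T$ factors as the flip $\tau_{V,W} \colon V \otimes W \to W \otimes V$ followed by $g \otimes f \colon W \otimes V \to V \otimes W$, and compute the trace using dual bases $\{\phi_i\} \subset V^\vee$ and $\{\psi_j\} \subset W^\vee$: one has $\Trace(T) = \sum_{i,j} \langle \phi_i, g(d_j) \rangle \langle \psi_j, f(e_i) \rangle$, and collapsing the sum over $j$ via $\sum_j \langle \psi_j, f(e_i) \rangle d_j = f(e_i)$ turns this into $\sum_i \langle \phi_i, g(f(e_i)) \rangle = \Trace(gf) = \Trace(fg)$. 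There is no genuine obstacle in this lemma; the only point requiring a little care is the bookkeeping of indices, and the observation that the equality $\Trace(fg) = \Trace(gf)$ is what makes the statement symmetric and hence insensitive to which of $f$, $g$ one places on the ``outside''.
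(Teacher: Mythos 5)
Your proof is correct and essentially identical to the paper's: the paper also computes $\Trace(T)=\sum_{i,j}\langle v^i,g(w_j)\rangle\langle w^j,f(v_i)\rangle$ with dual bases and collapses one of the sums (the paper collapses the $i$-sum to land directly on $\Trace(fg)$, while you collapse the $j$-sum and then invoke $\Trace(gf)=\Trace(fg)$, a cosmetic difference). The preliminary matrix computation is just the same argument in coordinate notation, so no further comment is needed.
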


The dual of this lemma is also useful: Let $B(V, W)$ be the set of bilinear maps $V \times W \to k$ and consider the map $T^\vee: B(V, W) \to B(V, W)$, $T^\vee(b)(v, w) = b(g(w), f(v))$. Since $T^\vee$ is the dual map of $T$ under the identification $B(V, W) \cong (V \otimes W)^\vee$, we have $\Trace(T^\vee) = \Trace(f g)$.

\begin{proof}
  Let $\{ v_i \}$ and $\{ w_j \}$ be a basis of $V$ and $W$, and let $\{ v^i \}$ and $\{ w^j \}$ be the dual basis to $\{ v_i \}$ and $\{ w_j \}$, respectively. Then we have
  \begin{align*}
    \Trace(T)
    = \sum_{i, j} \langle v^i, g(w_j) \rangle \langle w^j, f(v_i) \rangle
    = \sum_j \langle w^j, f(g(w_j)) \rangle = \Trace(f g)
  \end{align*}
  Here, the first and the last equality follow from $\Trace(f) = \sum_i \langle v^i, f(v_i) \rangle$ and the second follows from $x = \sum_i \langle v^i, x \rangle v_i$.
\end{proof}

Now, for simplicity, we assume $k$ to be an algebraically closed field of characteristic zero. Let $H$ be a finite-dimensional semisimple Hopf algebra over $k$,  e.g., the group algebra of a finite group $G$. Then, by the theorem of Larson and Radford \cite{MR957441}, $H$ is involutory. Let $\Vect_{\fdim}(k)$ denote the category of finite-dimensional vector spaces over $k$. The forgetful functor $\fdMod(H) \to \Vect_\fdim(k)$ is a separable functor with $k$-linear section
\begin{gather*}
  \Pi_{V,W}: \Hom_k(V, W) \to \Hom_H(V, W)
  \quad (V, W \in \Mod(H)) \\
  \Pi_{V,W}(f)(v) = S(\Lambda_{(1)}) f(\Lambda_{(2)} v)
  \quad (f \in \Hom_k(V, W), v \in V)
\end{gather*}
where $\Lambda \in H$ is the {\em Haar integral} ({\em i.e.}, the two-sided integral such that $\varepsilon(\Lambda) = 1$). Let $\Bil(V)$ denote the set of all bilinear forms on $V$. Instead of the map \eqref{eq:transposition-sep}, we prefer to consider the map
\begin{equation*}
  \widetilde{\Sigma}_V: \Bil(V) \to \Bil(V),
  \quad \widetilde{\Sigma}_V(b)(v, w) = b(\Lambda_{(1)} v, \Lambda_{(2)} w),
  \quad (b \in \Bil(V), v, w \in V)
\end{equation*}
which makes the diagram
\begin{equation*}
  \begin{CD}
    \Bil(V) & \ \cong \ & \Hom_k(V, V^\vee) @>{\Pi_{V,V^\vee}}>> \Hom_H(V, V^\vee) & \ \cong \ & \Bil_H(V) \\
    @V{\widetilde{\Sigma}_V}VV @V{\widetilde{\Trans}_{V,V}}VV @VV{\Trans_{V,V}}V @VV{\Sigma_V}V \\
    \Bil(V) & \ \cong \ & \Hom_k(V, V^\vee) @<{}<{\text{inclusion}}< \Hom_H(V, V^\vee) & \ \cong \ & \Bil_H(V)
  \end{CD}
\end{equation*}
commutes; see \S\ref{sec:piv-alg}, especially Theorem~\ref{thm:piv-FS-ind-ch}, for the details. Let $\rho: H \to \End_k(V)$ be the algebra map corresponding to the action $H \otimes V \to V$. By Lemma~\ref{lem:trace-1}, we have
\begin{equation*}
  \nu(V) = \Trace(\widetilde{\Sigma}_{V})
  = \Trace \Big( \rho(\Lambda_{(1)}) \circ \rho(\Lambda_{(2)}) \Big)
  = \chi_V(\Lambda_{(1)} \Lambda_{(2)})
\end{equation*}
where $\chi_V = \Trace \circ \rho$. This is the FS indicator for $H$ introduced by Linchenko and Montgomery \cite{MR1808131}. In the case where $H = k G$, we have $\nu(V) = |G|^{-1} \sum_{g \in G} \chi_V(g^2)$ since $\Lambda = |G|^{-1} \sum_{g \in G} g$. Formula~\eqref{eq:FS-formula-cpt} for compact groups is obtained in a similar way; see \S\ref{sec:coalgebras} for details.

\subsection{Twisted Duality}
\label{subsec:tw-FS-ind}

To deal with some twisted versions of the Frobenius--Schur theorem, we introduce a {\em twisting adjunction} of a category with duality and give a method to twist the original duality functor by using a twisting adjunction. Our method can be thought as a generalization of the arguments in \cite[\S4]{MR2879228}.

Let $\mathcal{C}$ be a category with duality. Suppose that we are given an adjunction $(F, G, \eta, \varepsilon): \mathcal{C} \to \mathcal{C}$ and a natural transformation $\xi_X: F(X^\vee) \to G(X)^\vee$. Define $\zeta_X$ by
\begin{equation}
  \label{eq:duality-trans-1}
  \begin{CD}
    \zeta_X: G(X^\vee) @>{j_{G(X^\vee)}}>> G(X^\vee)^{\vee\vee}
    @>{\xi^\vee_{X^\vee}}>> F(X^{\vee\vee})^\vee
    @>{F(j_X)^\vee}>> F(X)^\vee
  \end{CD}
\end{equation}
$\zeta_X$ is a natural transformation making the following diagrams commute:
\begin{equation}
  \label{eq:duality-trans-2}
  \begin{CD}
    F(X) @>{F(j_X)}>> F(X^{\vee\vee})\phantom{,} \\
    @V{j_{F(X)}}VV @VV{\xi_{X^\vee}}V \\
    F(X)^{\vee\vee} @>>{\zeta_X^\vee}> G(X^\vee)^\vee,
  \end{CD}
  \qquad \qquad
  \begin{CD}
    G(X) @>{G(j_X)}>> G(X^{\vee\vee})\phantom{.} \\
    @V{j_{G(X)}}VV @VV{\zeta_{X^\vee}}V \\
    G(X)^{\vee\vee} @>>{\xi_X^\vee}> F(X^\vee)^\vee
  \end{CD}
\end{equation}
Indeed, the commutativity of the first diagram is checked as follows:
\begin{align*}
  \zeta_X^\vee \circ j_{F(X)}
  = j_{G(X^\vee)}^\vee \circ \xi_{X^\vee}^{\vee\vee} \circ F(j_{X})^{\vee\vee} \circ j_{F(X)}
  = j_{G(X^\vee)}^\vee \circ j_{G(X^\vee)^\vee} \circ \xi_{X^\vee} \circ F(j_{X})
  = \xi_{X^\vee} \circ F(j_{X})
\end{align*}
The commutativity of the second one is checked in a similar way as follows:
\begin{align*}
  \zeta_{X^\vee} \circ G(j_X)
  & = F(j_{X^\vee})^\vee \circ \xi_{X^{\vee\vee}}^\vee \circ j_{G(X^{\vee\vee})} \circ G(j_X) \\
  & = F(j_{X^\vee})^\vee \circ \xi_{X^{\vee\vee}}^\vee \circ G(j_X)^{\vee\vee} \circ j_{G(X)}
  = F(j_{X^\vee})^\vee \circ F(j_X^\vee)^{\vee} \circ \xi_{X}^\vee \circ j_{G(X)}
  = \xi_{X}^\vee \circ j_{G(X)}
\end{align*}
Now we define the {\em twisted duality functor} by $X^\sharp = G(X^\vee)$. The problem is when $\mathcal{C}$ is a category with duality with this new duality functor $(-)^\sharp$.

\begin{lemma}
  \label{lem:twisting-adjunction}
  Define $\omega: \id_\mathcal{C} \to (-)^\sharp \circ (-)^\sharp$ by
  \begin{equation}
    \label{eq:twisted-piv}
    \begin{CD}
      \omega_X: X @>{\eta_X}>> GF(X) @>{GF(j)}>> GF(X^{\vee\vee})
      @>{G(\xi_{X^\vee})}>> G(G(X^\vee)^\vee) = X^{\sharp\sharp}
    \end{CD}
  \end{equation}
  The triple $(\mathcal{C}, (-)^\sharp, \omega)$ is a category with duality if
  \begin{equation}
    \label{eq:duality-trans-3}
    \begin{CD}
      \Big( FG(X^\vee)
      @>{F(\zeta_X)}>> F(F(X)^\vee)
      @>{\xi_{F(X)}}>> (GF(X))^\vee
      @>{\eta_X^\vee}>> X^\vee
      \Big) = \varepsilon_{X^\vee}
    \end{CD}
  \end{equation}
  holds for all $X \in \mathcal{C}$. If, moreover, $\mathcal{C}$ is strong, then the following are equivalent: \\
  \indent {\rm (1)} $F$ is an equivalence. \\
  \indent {\rm (2)} $G$ is an equivalence. \\
  \indent {\rm (3)} $(\mathcal{C}, (-)^\sharp, \omega)$ is strong.
\end{lemma}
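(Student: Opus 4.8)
The plan is to check the three clauses of Definition~\ref{def:cat-with-duality} for the pair $\bigl((-)^\sharp,\omega\bigr)$ directly, and then, under the strongness hypothesis, to reduce the equivalence of (1)--(3) to a statement about the functor $G$ alone by invoking Lemma~\ref{lem:duality-strong}. The only genuinely computational point is the analogue of the duality axiom \eqref{eq:duality-1} for $(-)^\sharp$; I expect the bookkeeping there --- keeping straight the many nested applications of $(-)^\vee$, $F$ and $G$, and reading $\zeta_X$ off correctly from its definition \eqref{eq:duality-trans-1} --- to be the main obstacle, though conceptually the calculation is short.

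For the first assertion, note that $(-)^\sharp=G\circ(-)^\vee$ is evidently a $k$-linear contravariant endofunctor, and $\omega$, being by \eqref{eq:twisted-piv} the composite of the natural transformations $\eta$, $GF(j)$ and $G(\xi_{(-)^\vee})$, is a natural transformation $\id_{\mathcal{C}}\to(-)^{\sharp\sharp}$. What remains is $(\omega_X)^\sharp\circ\omega_{X^\sharp}=\id_{X^\sharp}$. Here I would abbreviate $\alpha_Z:=\xi_{Z^\vee}\circ F(j_Z)\colon F(Z)\to G(Z^\vee)^\vee$, so that \eqref{eq:twisted-piv} becomes $\omega_Z=G(\alpha_Z)\circ\eta_Z$ and, comparing with \eqref{eq:duality-trans-1}, $\alpha_X^\vee\circ j_{G(X^\vee)}=\zeta_X$. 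Using contravariance of $(-)^\vee$ and functoriality of $G$ one expands $(\omega_X)^\sharp\circ\omega_{X^\sharp}=G\bigl(\eta_X^\vee\circ G(\alpha_X)^\vee\circ\alpha_{X^\sharp}\bigr)\circ\eta_{G(X^\vee)}$; since $\eta_{G(X^\vee)}$ is the unit at $G(X^\vee)$, the second counit--unit equation in \eqref{eq:adjunction-2} reduces the goal to showing $\eta_X^\vee\circ G(\alpha_X)^\vee\circ\alpha_{X^\sharp}=\varepsilon_{X^\vee}$ in $\Hom_{\mathcal{C}}\bigl(FG(X^\vee),X^\vee\bigr)$. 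Now applying naturality of $\xi$ at the morphism $\alpha_X\colon F(X)\to G(X^\vee)^\vee$ rewrites $G(\alpha_X)^\vee\circ\alpha_{X^\sharp}$ as $\xi_{F(X)}\circ F\bigl(\alpha_X^\vee\circ j_{G(X^\vee)}\bigr)=\xi_{F(X)}\circ F(\zeta_X)$, so the left-hand side equals $\eta_X^\vee\circ\xi_{F(X)}\circ F(\zeta_X)$, which is precisely $\varepsilon_{X^\vee}$ by the hypothesis \eqref{eq:duality-trans-3}. This completes the first part; the only clever move is to invoke naturality of $\xi$ at $\alpha_X$ itself.

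For the second assertion, assume $\mathcal{C}$ strong. The equivalence (1)$\Leftrightarrow$(2) is the standard fact that in an adjunction $(F,G,\eta,\varepsilon)$ one of $F$, $G$ is an equivalence iff the other is, iff both $\eta$ and $\varepsilon$ are isomorphisms (combine ``$F$ fully faithful $\Leftrightarrow$ $\eta$ iso'' from \cite[IV.3]{MR1712872}, its dual statement for $G$ and $\varepsilon$, and uniqueness of adjoints). For (2)$\Leftrightarrow$(3), the key observation is that, viewed as a functor $\mathcal{C}\to\mathcal{C}^{\op}$, the twisted duality functor factors as $(-)^\sharp=G^{\op}\circ D$ with $D=(-)^\vee\colon\mathcal{C}\to\mathcal{C}^{\op}$. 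Since $\mathcal{C}$ is strong, $D$ is an equivalence by Lemma~\ref{lem:duality-strong}, so $(-)^\sharp$ is an equivalence iff $G^{\op}$ is, i.e.\ iff $G$ is. On the other hand, by the first part $(\mathcal{C},(-)^\sharp,\omega)$ is a category with duality, so Lemma~\ref{lem:duality-strong} applied to \emph{it} says it is strong iff $(-)^\sharp\colon\mathcal{C}\to\mathcal{C}^{\op}$ is an equivalence. Chaining these equivalences yields (3)$\Leftrightarrow$``$G$ is an equivalence''$\Leftrightarrow$(2)$\Leftrightarrow$(1), as desired.
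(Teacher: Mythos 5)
Your proposal is correct and follows essentially the same route as the paper: a direct verification of $(\omega_X)^\sharp\circ\omega_{X^\sharp}=\id_{X^\sharp}$ via naturality of $\xi$, the hypothesis \eqref{eq:duality-trans-3} and the counit--unit equations \eqref{eq:adjunction-2}, followed by Lemma~\ref{lem:duality-strong} for the equivalence of (1)--(3). Your bookkeeping with $\alpha_Z=\xi_{Z^\vee}\circ F(j_Z)$ is in fact slightly more economical than the paper's computation, which routes through the alternative expression $\omega_X=G(\zeta_X^\vee\circ j_{F(X)})\circ\eta_X$ and therefore needs additional appeals to the naturality of $j$ and to \eqref{eq:duality-1}.
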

\begin{proof}
  Let $X \in \mathcal{C}$. By~\eqref{eq:duality-trans-1}, $\omega_X$ can be expressed in two ways as follows:
  \begin{equation*}
    \omega_X = G(\xi_{X^\vee} \circ F(j_{X})) \circ \eta_X = G(\zeta_{X} \circ j_{F(X)}) \circ \eta_X
  \end{equation*}
  For each $X \in \mathcal{C}$, we compute
  \begin{align*}
    \omega_X^\sharp \circ \omega_{X^\sharp}
    & = G(\omega_X^\vee) \circ \omega_{G(X^\vee)}
    = G(\omega_X^\vee) \circ G\Big( \xi_{G(X^\vee)^\vee} \circ F(j_{G(X^\vee)}) \Big) \circ \eta_{G(X^\vee)} \\
    & = G \Big( \eta_X^\vee \circ G(\zeta_X^\vee \circ j_{F(X)})^\vee
    \circ \xi_{G(X^\vee)^\vee} \circ F(j_{G(X^\vee)}) \Big) \circ \eta_{G(X^\vee)} \\
    & = G \Big(\eta_X^\vee \circ \xi_{F(X)} \circ F(j_{F(X)}^\vee \circ j_{F(X)^\vee} \circ \zeta_X) \Big) \circ \eta_{G(X^\vee)} \\
    & = G \Big(\eta_X^\vee \circ \xi_{F(X)} \circ F(\zeta_X) \Big) \circ \eta_{G(X^\vee)}
    = G (\varepsilon_{X^\vee}) \circ \eta_{G(X^\vee)} = \id_{G(X^\vee)} = \id_{X^\sharp}
  \end{align*}
  Here, the fourth equality follows from the naturality of $\xi$, the fifth from~\eqref{eq:duality-1}, the sixth from the Assumption~\eqref{eq:duality-trans-3}, and the seventh from \eqref{eq:adjunction-2}. Now we have shown that the triple $(\mathcal{C}, (-)^\sharp, \omega_X)$ is a category with duality.

  It is easy to prove the rest of the statement; $(1)\Leftrightarrow(2)$ follows from basic properties of adjunctions. To show $(2)\Leftrightarrow(3)$, recall Lemma~\ref{lem:duality-strong}.
\end{proof}

In view of Lemma~\ref{lem:twisting-adjunction}, we call a quintuple $\mathbf{t} = (F, G, \eta, \varepsilon, \xi)$ satisfying \eqref{eq:duality-trans-3} a {\em twisting adjunction} for $\mathcal{C}$. Given such a quintuple $\mathbf{t}$, we denote by $\mathcal{C}^{\mathbf{t}}$ the triple $(\mathcal{C}, (-)^\sharp, \omega)$ constructed in Lemma \ref{lem:twisting-adjunction}. Now we introduce an involution of a category with duality:

\begin{definition}
  \label{def:duality-involution}
  An {\em involution} of $\mathcal{C}$ is a triple $\mathbf{t} = (F, \xi, \eta)$ such that $(F, \xi)$ is a strong duality preserving functor on $\mathcal{C}$ and $\eta$ is an isomorphism
  \begin{equation}
    \label{eq:cat-w-dual-invol-1}
    \eta: (\id_\mathcal{C}, \id_{(-)^\vee}) \to (F, \xi) \circ (F, \xi)
  \end{equation}
  of duality preserving functors satisfying
  \begin{equation}
    \label{eq:cat-w-dual-invol-2}
    \eta_{F(X)} = F(\eta_X)
  \end{equation}
  for all $X \in \mathcal{C}$. We say that $\mathbf{t}$ is {\em strict} if $\xi$ and $\eta$ are identities.
\end{definition}

Note that \eqref{eq:cat-w-dual-invol-1} is an isomorphism of such functors if and only if
\begin{equation}
  \label{eq:cat-w-dual-invol-3}
  \eta_X^\vee \circ \xi_{F(X)} \circ F(\xi_X) \circ \eta_{X^\vee} = \id_{X^\vee}
\end{equation}
holds for all $X \in \mathcal{C}$.

An involution $(F, \xi, \eta)$ of $\mathcal{C}$ is a special type of twisting adjunction; indeed, by~\eqref{eq:cat-w-dual-invol-2}, the quadruple $(F, F, \eta, \eta^{-1})$ is an adjunction. By the definition of duality preserving functors, the natural transformation \eqref{eq:duality-trans-1} is given by
\begin{align*}
  \zeta_X
  & = F(j_{X})^\vee \circ \xi_{X^{\vee}}^\vee \circ j_{F(X^{\vee})}
  = F(j_{X})^\vee \circ \xi_{X^{\vee\vee}} \circ F(j_{X^{\vee}})
  = \xi_{X^{\vee\vee}} \circ F(j_X^\vee) \circ F(j_{X^{\vee}}) = \xi_X
\end{align*}
Since the counit is $\eta^{-1}$, \eqref{eq:duality-trans-3} is equivalent to \eqref{eq:cat-w-dual-invol-3}. Hence $(F, F, \eta, \eta^{-1}, \xi)$ is a twisting adjunction for $\mathcal{C}$. From now, we identify an involution of $\mathcal{C}$ with the corresponding twisting adjunction.

Suppose that $\mathcal{C}$ is a category with duality over $k$. Let $\mathbf{t} = (F, G, \dotsc)$ be a twisting adjunction for $\mathcal{C}$. $\mathbf{t}$ is said to be {\em $k$-linear} if the functor $F: \mathcal{C} \to \mathcal{C}$ is $k$-linear. If this is the case, $G$ is also $k$-linear as the right adjoint of $F$  (see,  e.g., \cite[IV.1]{MR1712872}) and hence $\mathcal{C}^\mathbf{t}$ is a category with duality over $k$.

\begin{definition}
  \label{def:tw-FS-ind}
  Let $\mathcal{C}$ be a category with duality over $k$, and let $\mathbf{t}$ be a $k$-linear twisting adjunction for $\mathcal{C}$. The ($\mathbf{t}$-){\em twisted FS indicator} $\nu^\mathbf{t}(X)$ of $X \in \mathcal{C}$ is defined by
  \begin{equation*}
    \nu^\mathbf{t}(X) = \nu(X^\mathbf{t})
  \end{equation*}
  where $X^\mathbf{t} \in \mathcal{C}^{\mathbf{t}}$ is the object $X$ regarded as an object of $\mathcal{C}^\mathbf{t}$.
\end{definition}

To study the twisted FS indicator, it is useful to introduce the twisted transposition map. Let $\mathcal{C}$ be a category with duality (not necessarily over $k$). Given a twisting adjunction $\mathbf{t} = (F, G, \eta, \varepsilon, \xi)$ for $\mathcal{C}$, the ($\mathbf{t}$-){\em twisted transposition map}
\begin{equation}
  \label{eq:tw-trans-1}
  \Trans_{X,Y}^\mathbf{t}: \Hom_\mathcal{C}(F(X), Y^\vee) \to \Hom_\mathcal{C}(F(Y), X^\vee)
  \quad (X, Y \in \mathcal{C})
\end{equation}
is defined for $f: F(X) \to Y^\vee$ by
\begin{equation*}
  \begin{CD}
    \Trans_{X,Y}^\mathbf{t}(f): F(Y) @>{F(f^\vee \circ j_Y)}>> F(F(X)^\vee)
    @>{\xi_{F(X)}}>> (GF(X))^\vee @>{\eta_X^\vee}>> X^\vee
  \end{CD}
\end{equation*}
For a while, let $\Trans^\sharp_{X,Y}: \Hom_\mathcal{C}(X, Y^\sharp) \to \Hom_\mathcal{C}(Y, X^\sharp)$ denote the transposition map for $\mathcal{C}^\mathbf{t}$. One can easily check that the diagram
\begin{equation}
  \label{eq:tw-trans-2}
  \begin{CD}
    \Hom_\mathcal{C}(F(X), Y^\vee) @>{\cong}>> \Hom_\mathcal{C}(X, G(Y^\vee))
    & \ = \ & \Hom_\mathcal{C}(X, Y^\sharp) \\
    @V{\Trans_{X,Y}^\mathbf{t}}VV @. @VV{\Trans_{X,Y}^\sharp}V \\
    \Hom_\mathcal{C}(F(Y), X^\vee) @>>{\cong}> \Hom_\mathcal{C}(Y, G(X^\vee))
    & \ = \ & \Hom_\mathcal{C}(Y, X^\sharp)
  \end{CD}
\end{equation}
commutes for all $X, Y \in \mathcal{C}$, where the horizontal arrows are the natural bijection given by~\eqref{eq:adjunction-3}. Hence, under the condition of Definition~\ref{def:tw-FS-ind}, we have
\begin{equation*}
  \nu^\mathbf{t}(X) = \Trace(\Trans^\sharp_{X,X}) = \Trace(\Trans_{X,X}^\mathbf{t})
\end{equation*}

The properties of the twisted FS indicator can be obtained as follows: Apply previous results to $\mathcal{C}^\mathbf{t}$ and then interpret the results in terms of $\mathcal{C}$ and $\mathbf{t}$ by using the commutative Diagram~\eqref{eq:tw-trans-2}. Following this scheme, a twisted version of Proposition~\ref{prop:FS-ind-abs-simple} is established as follows:

\begin{proposition}
  \label{prop:tw-FS-ind-abs-simple}
  Let $\mathcal{C}$ be an Abelian category with strong duality over $k$, let $X \in \mathcal{C}$, and let $\mathbf{t} = (F, G, \dotsc)$ be a $k$-linear twisting adjunction for $\mathcal{C}$ such that $F$ is an equivalence of categories. Then:

  {\rm (a)} If $X$ is a finite biproduct of simple objects, then $\nu^\mathbf{t}(F(X)) = \nu^\mathbf{t}(X^{\vee})$.

  {\rm (b)} If $X$ is absolutely simple, then $\nu^\mathbf{t}(X) \in \{ 0, \pm 1 \}$. $\nu^\mathbf{t}(X) \ne 0$ if and only if $F(X) \cong X^\vee$.
\end{proposition}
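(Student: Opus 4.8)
The plan is to deduce the statement from Proposition~\ref{prop:FS-ind-abs-simple} applied to the twisted category $\mathcal{C}^\mathbf{t}$, as the scheme before the statement suggests. First I would verify that $\mathcal{C}^\mathbf{t}$ satisfies the standing hypotheses: its underlying category is the Abelian category $\mathcal{C}$; it is a category with duality over $k$ because $\mathbf{t}$ is $k$-linear, so $G$ and hence $(-)^\sharp = G \circ (-)^\vee$ are $k$-linear; it satisfies the finiteness condition \eqref{eq:assumption-fin-dim} since $\Hom_{\mathcal{C}^\mathbf{t}}(X,Y) = \Hom_\mathcal{C}(X,Y)$; and it is strong by $(1) \Rightarrow (3)$ of Lemma~\ref{lem:twisting-adjunction}, using that $F$ is an equivalence and $\mathcal{C}$ has strong duality. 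The one point here that is not immediate, and which I expect to be the crux of part (a), is that $F$ being an equivalence forces $\xi$ to be a natural isomorphism: in the composite \eqref{eq:twisted-piv} defining $\omega_X$ the factors $\eta_X$ and $G F(j_X)$ are isomorphisms and $\omega_X$ itself is an isomorphism by strongness of $\mathcal{C}^\mathbf{t}$, so $G(\xi_{X^\vee})$ is an isomorphism, hence so is $\xi_{X^\vee}$ (as $G$ is an equivalence, by $(1) \Leftrightarrow (2)$ of Lemma~\ref{lem:twisting-adjunction}), and since every object is isomorphic via $j$ to a $\vee$-dual, $\xi$ is a natural isomorphism. Then $\zeta$ is a natural isomorphism too by \eqref{eq:duality-trans-1}, so $\zeta_Y \colon Y^\sharp = G(Y^\vee) \to F(Y)^\vee$ is an isomorphism for every $Y \in \mathcal{C}$.

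For (b), I would use that absolute simplicity depends only on the underlying $k$-linear category, so $X$ is absolutely simple in $\mathcal{C}$ exactly when it is so in $\mathcal{C}^\mathbf{t}$; then Proposition~\ref{prop:FS-ind-abs-simple}(b) in $\mathcal{C}^\mathbf{t}$ gives $\nu^\mathbf{t}(X) \in \{0, \pm 1\}$ with $\nu^\mathbf{t}(X) \ne 0$ iff $X \cong X^\sharp = G(X^\vee)$ in $\mathcal{C}$. Since the unit $\eta$ and counit $\varepsilon$ of the adjunction $(F, G, \eta, \varepsilon)$ are isomorphisms ($F$ being an equivalence), applying $F$ turns an isomorphism $X \cong G(X^\vee)$ into $F(X) \cong FG(X^\vee) \cong X^\vee$, and applying $G$ turns $F(X) \cong X^\vee$ into $X \cong GF(X) \cong G(X^\vee)$; hence $\nu^\mathbf{t}(X) \ne 0$ iff $F(X) \cong X^\vee$, which is (b).

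For (a), I would note that an equivalence is exact and additive, so $F$ carries a finite biproduct $X$ of simple objects of $\mathcal{C}$ to a finite biproduct $F(X)$ of simple objects of $\mathcal{C}$, equivalently of $\mathcal{C}^\mathbf{t}$; Proposition~\ref{prop:FS-ind-abs-simple}(a) in $\mathcal{C}^\mathbf{t}$ applied to $F(X)$ then gives $\nu^\mathbf{t}(F(X)) = \nu^\mathbf{t}(F(X)^\sharp)$. Now $F(X)^\sharp = G(F(X)^\vee) \cong (F(F(X)))^\vee \cong X^\vee$, the first isomorphism being $\zeta_{F(X)}$ and the second obtained by applying $(-)^\vee$ to the isomorphism $\eta_X \colon X \to F(F(X))$; since $\nu^\mathbf{t}$ is an isomorphism invariant (Proposition~\ref{prop:FS-ind-basic}(a) in $\mathcal{C}^\mathbf{t}$), this yields $\nu^\mathbf{t}(F(X)) = \nu^\mathbf{t}(X^\vee)$.

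In short, the only genuinely non-formal ingredient is the observation in the first paragraph that an equivalence $F$ forces $\xi$, and hence $\zeta$, to be an isomorphism, i.e.\ that the $\sharp$-duality is naturally identified with $(-)^\vee \circ F$; granting that, both parts are routine translations of Proposition~\ref{prop:FS-ind-abs-simple} across the identity-on-objects passage between $\mathcal{C}$ and $\mathcal{C}^\mathbf{t}$, with Diagram~\eqref{eq:tw-trans-2} accounting for the identification of $\nu^\mathbf{t}$ with the FS indicator computed in $\mathcal{C}^\mathbf{t}$.
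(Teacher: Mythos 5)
Your overall strategy --- transport Proposition~\ref{prop:FS-ind-abs-simple} to $\mathcal{C}^{\mathbf{t}}$ and translate back --- is exactly the scheme the paper prescribes (the paper gives no further details), and your preliminaries and part (b) are correct: in particular the observation that strongness of $\mathcal{C}^{\mathbf{t}}$ forces $\xi$, and hence $\zeta$, to be a natural isomorphism is right, and in (b) you use the unit $\eta_X\colon X\to GF(X)$ and the counit $\varepsilon_{X^\vee}\colon FG(X^\vee)\to X^\vee$ with the correct types.

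Part (a), however, has a genuine gap at the decisive step. You justify the second isomorphism in $F(X)^{\sharp}=G(F(X)^\vee)\cong (F(F(X)))^\vee\cong X^\vee$ by ``the isomorphism $\eta_X\colon X\to F(F(X))$''. But the unit of the adjunction $(F,G,\eta,\varepsilon)$ is $\eta_X\colon X\to GF(X)$; it is $GF(X)$, not $FF(X)$, that is isomorphic to $X$. What $\zeta_{F(X)}$ actually provides is $F(X)^{\sharp}\cong\bigl(F^{2}(X)\bigr)^{\vee}$, and $F^{2}(X)\not\cong X$ for a general twisting adjunction, so Proposition~\ref{prop:FS-ind-abs-simple}(a) applied to $F(X)$ in $\mathcal{C}^{\mathbf{t}}$ yields $\nu^{\mathbf{t}}(F(X))=\nu^{\mathbf{t}}\bigl((F^{2}(X))^{\vee}\bigr)$, not $\nu^{\mathbf{t}}(X^{\vee})$. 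The identification you want is available precisely when $F=G$ (i.e.\ when $\mathbf{t}$ is an involution, where indeed $\eta\colon\id\to F^{2}$), which is the only case the paper subsequently uses, but it does not follow from $F$ being an equivalence. The gap is not cosmetic: on $\Vect^{\mathbb{Z}}_{\fdim}$ (with $k_a$ the one-dimensional object in degree $a$ and $(k_a)^{\vee}=k_{-a}$), taking $F=$ shift by $+2$, $G=$ shift by $-2$, $\eta=\varepsilon=\id$ and $\xi=\id$ gives a $k$-linear twisting adjunction with $F$ an equivalence for which $(k_a)^{\sharp}=k_{-a-2}$, so $\nu^{\mathbf{t}}(k_a)\ne 0$ iff $a=-1$; with $X=k_{-3}$ one then has $\nu^{\mathbf{t}}(F(X))=\nu^{\mathbf{t}}(k_{-1})=\pm1$ while $\nu^{\mathbf{t}}(X^{\vee})=\nu^{\mathbf{t}}(k_{3})=0$. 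What the transport argument does give in general is $\nu^{\mathbf{t}}(X)=\nu^{\mathbf{t}}(X^{\sharp})=\nu^{\mathbf{t}}(G(X^{\vee}))$, equivalently $\nu^{\mathbf{t}}(G(X))=\nu^{\mathbf{t}}(X^{\vee})$; so you should either restrict (a) to involutions (or assume $F\cong G$) or prove it in that reformulated form.
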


Now let $H$ be a finite-dimensional semisimple Hopf algebra over an algebraically closed field $k$ of characteristic zero. We give two examples of $k$-linear twisting adjunctions for $\fdMod(H)$.

\begin{example}
  \label{ex:tw-adj-1}
  An automorphism $\tau$ of $H$ induces a strict monoidal autoequivalence on $\fdMod(H)$. If $\tau^2 = \id_H$, then it gives rise to a strict involution of $\fdMod(H)$, which we denote by the same symbol $\tau$. The study of the $\tau$-twisted FS indicator leads us to the results of Sage and Vega \cite{MR2879228}; see \S\ref{sec:piv-alg}.
\end{example}

\begin{example}
  Let $L \in \fdMod(H)$ be a left $H$-module such that $h_{(1)} \ell \otimes h_{(2)} = h_{(2)} \ell \otimes h_{(1)}$ holds for all $h \in H$ and $\ell \in L$. This condition implies that, for each $X \in \fdMod(H)$, the map
  \begin{equation*}
    \mathrm{flip}: L \otimes X \to X \otimes L,
    \quad \ell \otimes x \mapsto x \otimes \ell
    \quad (\ell \in L, x \in X)
  \end{equation*}
  is an isomorphism of left $H$-modules. Fix a basis $\{ \ell_i \}$ of $L$ and define
  \begin{gather*}
    \eta_X: X \to L \otimes L^\vee \otimes X,
    \quad \eta_X(x) = \sum \ell_i \otimes \ell_i^\vee \otimes x \\
    \varepsilon_X: L^\vee \otimes L \otimes X \to X,
    \quad \varepsilon_X(\ell_i^\vee \otimes \ell_j \otimes x) = \delta_{i j} x
  \end{gather*}
  for $x \in X$, where $\{ \ell_i^\vee \}$ is the dual basis of $\{ \ell_i \}$. The quadruple $(F = L^\vee \otimes (-), G = L \otimes (-), \eta, \varepsilon)$ is an adjunction on $\fdMod(H)$. Moreover, if we define $\xi_X$ by
  \begin{equation*}
    \begin{CD}
      \xi_X: L^\vee \otimes X^\vee @>{\cong}>> (X \otimes L)^\vee @>{\mathrm{flip}^\vee}>> (L \otimes X)^\vee
    \end{CD}
  \end{equation*}
  then the quintuple $\mathbf{t}(L):  = (F, G, \eta, \varepsilon, \xi)$ is a twisting adjunction for $\fdMod(H)$. Since, in general, $F$ and $G$ are not monoidal, $\mathbf{t}(L)$ is of different type of twisting adjunctions from Example~\ref{ex:tw-adj-1}.

  Following the above notation, we write $X^\sharp$ for $G(X^\vee)$. To interpret the $\mathbf{t}(L)$-twisted FS indicator $\nu(V; L) := \nu^{\mathbf{t}(L)}(V)$, we recall that there is an isomorphism
  \begin{equation*}
    \Hom_H(X, Y^\sharp) = \Hom_H(X, L \otimes Y^\vee) \cong \Hom_H(X \otimes Y, L)
  \end{equation*}
  natural in $X, Y \in \fdMod(H)$. The transposition map for $\fdMod(H)^{\mathbf{t}(L)}$ induces
  \begin{equation*}
    \Sigma_V^L: \Hom_H(V \otimes V, L) \to \Hom_H(V \otimes V, L),
    \quad \Sigma_V^L(b)(v, w) = b(w, v)
  \end{equation*}
  via the above isomorphism. Hence we have $\nu(V; L) = \dim_k B_H^+(V; L) - \dim_k B_H^-(V; L)$, where $B_H^{\pm}(V; L)$ is the eigenspace of $\Sigma_V^L$ with eigenvalue $\pm 1$.

  Let $B(V; L)$ denote the set of all bilinear maps $V \times V \to L$. To express $\nu(V; L)$ by using the characters of $V$ and $L$, we use the map
  \begin{equation*}
    \widetilde{\Sigma}_V^L: B(V; L) \to B(V; L),
    \quad \widetilde{\Sigma}_V^L(b)(v, w)
    = S(\Lambda_{(1)}) \cdot b(\Lambda_{(2)} w, \Lambda_{(3)} v)
  \end{equation*}
  which makes the following diagrams commute:
  \begin{equation*}
    \begin{CD}
      B(V; L) & \ \cong \ & \Hom_k(V \otimes V, L) @>{\Pi_{V \otimes V, L}}>>
      \Hom_H(V \otimes V, L) & \ \cong \ & \Hom_H(V, V^\sharp)\phantom{.} \\
      @V{\widetilde{\Sigma}_V^L}VV @. @VV{\Sigma_{V}^{L}}V @VV{\Trans_{V,V}^\sharp}V \\
      B(V; L) & \ \cong \ & \Hom_k(V \otimes V, L) @<{}<{\text{inclusion}}<
      \Hom_H(V \otimes V, L) & \ \cong \ & \Hom_H(V, V^\sharp)
    \end{CD}
  \end{equation*}
  Now, let, in general, $f: A \to B$, $g: B \to A$ and $h: M \to M$ be linear maps between finite-dimensional vector spaces. Then, in a similar way as Lemma~\ref{lem:trace-1}, one can show that the trace of
  \begin{equation*}
    T: \Hom_k(A \otimes B, M) \to \Hom_k(A \otimes B, M),
    \quad T(\mu)(a \otimes b) = h \mu(g(b) \otimes f(a))
  \end{equation*}
  is given by $\Trace(T) = \Trace(h) \Trace(f g)$. By using this formula, we have
  \begin{equation*}
    \nu(V; L) = \Trace(\widetilde{\Sigma}_{V}^{L}) = \chi_L(S(\Lambda_{(1)})) \chi_V(\Lambda_{(2)} \Lambda_{(3)})
  \end{equation*}
\end{example}

If $\dim_k L = 1$, then $L \otimes (-)$ is an equivalence and hence Proposition~\ref{prop:tw-FS-ind-abs-simple} can be applied to the above example. By the above arguments, we now obtain in the following another type of twisted version of the Frobenius--Schur theorem for semisimple Hopf algebras.

\begin{theorem}
  \label{thm:FS-ind-Hopf-tw-2}
  Let $\alpha: H \to k$ be an algebra map such that $\alpha(h_{(1)}) h_{(2)} = \alpha(h_{(2)}) h_{(1)}$ holds for all $h \in H$ (or, equivalently, let $\alpha$ be a central grouplike element of the dual Hopf algebra $H^\vee$), and let $L$ be the left $H$-module corresponding to $\alpha$. Then, for all simple module $V \in \fdMod(H)$, we have
  \begin{equation*}
    \nu(V; L) = \alpha(S(\Lambda_{(1)})) \chi_V(\Lambda_{(2)} \Lambda_{(3)}) \in \{ 0, \pm 1 \}
  \end{equation*}
  Moreover, for a simple module $V \in \fdMod(H)$, the following statements are equivalent: \\
  \indent {\rm (1)} $\nu(V; L) \ne 0$. \\
  \indent {\rm (2)} $V \cong L \otimes V^\vee$. \\
  \indent {\rm (3)} There exists a non-degenerate bilinear form $b: V \otimes V \to k$ satisfying
  \begin{equation*}
    b(h_{(1)} v, h_{(2)} w) = \alpha(h) b(v, w)
    \text{\quad for all $h \in H$ and $v, w \in V$}
  \end{equation*}
  If one of the above equivalent statements holds, then such a bilinear form $b$ is unique up to scalar multiples and satisfies $b(w, v) = \nu(V; L) b(v, w)$ for all $v, w \in V$.
\end{theorem}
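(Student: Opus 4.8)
The plan is to reduce everything to the general theory of the twisted FS indicator specialized to the twisting adjunction $\mathbf{t}(L)$, combined with Schur's lemma. Since $L$ is the one-dimensional $H$-module afforded by $\alpha$, we have $\chi_L = \alpha$; substituting this into the formula $\nu(V;L) = \chi_L(S(\Lambda_{(1)}))\,\chi_V(\Lambda_{(2)}\Lambda_{(3)})$ obtained above yields the displayed expression for $\nu(V;L)$. Because $\dim_k L = 1$, the functor $L^\vee \otimes (-)$ is an autoequivalence of $\fdMod(H)$, so Proposition~\ref{prop:tw-FS-ind-abs-simple} applies with $\mathcal{C} = \fdMod(H)$ and $\mathbf{t} = \mathbf{t}(L)$; moreover, $V$ is absolutely simple since $k$ is algebraically closed. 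Part~(b) of that proposition then gives $\nu(V;L)\in\{0,\pm 1\}$ at once, together with the equivalence of (1) with $F(V) \cong V^\vee$, i.e.\ $L^\vee \otimes V \cong V^\vee$. Tensoring the latter isomorphism with $L$ and using $L \otimes L^\vee \cong k$ turns it into $V \cong L\otimes V^\vee$, which is statement (2); hence (1) $\Leftrightarrow$ (2).

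For (2) $\Leftrightarrow$ (3) and the uniqueness assertion I would pass through the natural isomorphisms
\[
  \Hom_H(V, L\otimes V^\vee)\ \cong\ \Hom_H(V\otimes V, L)\ \cong\ B_H(V;L),
\]
where $B_H(V;L)$ denotes the space of bilinear forms $b\colon V\times V\to k$ with $b(h_{(1)}v,h_{(2)}w)=\alpha(h)b(v,w)$ for all $h,v,w$; the second isomorphism is just the observation that an $H$-linear map $V\otimes V\to L\cong k$ is precisely such a form. Under this correspondence the morphism $\phi\colon V\to L\otimes V^\vee$ attached to $b$ has kernel equal to the left radical of $b$, so $b$ is non-degenerate exactly when $\phi$ is injective; since $\dim_k V = \dim_k(L\otimes V^\vee)$, this holds exactly when $\phi$ is an isomorphism. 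Thus a non-degenerate form exists iff $V\cong L\otimes V^\vee$, which is (2) $\Leftrightarrow$ (3). When these hold, $V$ and $L\otimes V^\vee$ are isomorphic simple modules, so $\dim_k\Hom_H(V,L\otimes V^\vee)=1$ by Schur's lemma, whence $\dim_k B_H(V;L)=1$ and $b$ is unique up to a scalar.

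For the symmetry of $b$, I would use the commutative diagram established above relating the transposition map $\Trans^\sharp_{V,V}$ of $\fdMod(H)^{\mathbf{t}(L)}$ to the involution $\Sigma^L_V$ on $B_H(V;L)$ given by $\Sigma^L_V(b)(v,w)=b(w,v)$. Since $\Sigma^L_V$ corresponds to $\Trans^\sharp_{V,V}$ under the above isomorphism, the identity~\eqref{eq:transposition-3} applied inside $\fdMod(H)^{\mathbf{t}(L)}$ gives $(\Sigma^L_V)^2=\id$, so on the one-dimensional space $B_H(V;L)$ the operator $\Sigma^L_V$ acts as multiplication by a scalar $c$ with $c^2=1$; taking traces, $c = \Trace(\Sigma^L_V) = \Trace(\Trans^\sharp_{V,V}) = \nu(V;L)$. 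Therefore $\Sigma^L_V(b)=\nu(V;L)\,b$, that is, $b(w,v)=\nu(V;L)\,b(v,w)$, as claimed.

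The one step that requires genuine care, rather than a reference to earlier results, is the middle paragraph: I must check that the natural isomorphism $\Hom_H(V, L\otimes V^\vee)\cong\Hom_H(V\otimes V,L)$ really does match isomorphisms of modules with non-degenerate forms, and (if one prefers not to lean on the diagram in the text) that it intertwines $\Trans^\sharp_{V,V}$ with the flip $\Sigma^L_V$. Everything else is a mechanical application of Proposition~\ref{prop:tw-FS-ind-abs-simple}, Schur's lemma, and the identity~\eqref{eq:transposition-3}.
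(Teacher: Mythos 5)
Your proposal is correct and follows essentially the same route as the paper: the formula comes from specializing $\chi_L=\alpha$ in the trace computation for the twisting adjunction $\mathbf{t}(L)$, the values and the equivalence (1)$\Leftrightarrow$(2) come from Proposition~\ref{prop:tw-FS-ind-abs-simple} applied to the autoequivalence $L^\vee\otimes(-)$, and (2)$\Leftrightarrow$(3), uniqueness, and the symmetry statement come from the identification $\Hom_H(V,L\otimes V^\vee)\cong\Hom_H(V\otimes V,L)$ together with Schur's lemma and the involutivity of $\Sigma^L_V$. The paper leaves exactly these details to "the above arguments," and you have filled them in as intended (it also sketches a second proof later, via the pivotal algebra $(H,T_\alpha,1)$, which you do not need).
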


For the case where $H = k G$ is the group algebra of a finite group $G$, the above theorem has been obtained by Mizukawa \cite[Theorem~3.5]{MR2787651}.

\subsection{Group Action on a Pivotal Monoidal Categories}
\label{subsec:group-action-pivotal}

We have concentrated on studying generalizations of the second FS indicator. Here we briefly explain how to define the higher twisted FS indicators for $k$-linear pivotal monoidal categories by generalizing those for semisimple Hopf algebras due to Sage and Vega \cite{MR2879228}. As we have remarked in Section 1, the details are left for future work.

For a set $S$, we denote by $\underline{S}$ the category whose objects are the elements of $S$ and whose morphisms are the identity morphisms.  If $G$ is a group, then $\underline{G}$ is a strict monoidal category with tensor product given by $x \otimes y = x y$ ($x, y \in G$).

Let $\mathcal{C}$ be a $k$-linear pivotal monoidal category with pivotal structure $j$. We denote by $\underline{\mathrm{Aut}}_\mathrm{piv}(\mathcal{C})$ the category of $k$-linear monoidal autoequivalences of $\mathcal{C}$ that {\em preserve the pivotal structure} in the sense of \cite{MR2381536}. This is a strict monoidal category with respect to the composition of monoidal functors. By an {\em action} of $G$ on $\mathcal{C}$, we mean a strong monoidal functor
\begin{equation*}
  \underline{G} \to \underline{\mathrm{Aut}}_{\mathrm{piv}}(\mathcal{C}), \quad g \mapsto F_g \quad (g \in G)
\end{equation*}
Note that, by definition, there are natural isomorphisms $\id_\mathcal{C} \cong F_1$ and $F_{x} \circ F_{y} \cong F_{x y}$ of monoidal functors. We say that an action $\underline{G} \to \underline{\mathrm{Aut}}_{\mathrm{piv}}(\mathcal{C})$ is {\em strict} if it is strict as a monoidal functor and, moreover, $F_g: \mathcal{C} \to \mathcal{C}$ is strict as a monoidal functor for all $g \in G$.

Now suppose that an action of $G$ on $\mathcal{C}$ is given. The {\em crossed product} $\mathcal{C} \rtimes G$ is a monoidal category defined as follows: As a $k$-linear category, $\mathcal{C} \rtimes G = \bigoplus_{g \in G} \mathcal{C} \rtimes g$, where $\mathcal{C} \rtimes g = \mathcal{C}$ is a copy of $\mathcal{C}$. Given an object $X \in \mathcal{C}$, we denote by $(X, g)$ the object $X$ regarded as an object of $\mathcal{C} \rtimes g \subset \mathcal{C} \rtimes G$. The tensor product of $\mathcal{C} \rtimes G$ is given by
\begin{equation*}
  (X, g) \otimes (Y, g') = (X \otimes F_g(Y), g g')
\end{equation*}
see \cite{MR1815142} and \cite{MR2480712} for details. We now claim:

\begin{lemma}
  \label{lem:crossed-pivotal}
  $\mathcal{C} \rtimes G$ is rigid. The dual object of $(X, g) \in \mathcal{C} \rtimes g$ is given by
  \begin{equation*}
    (X, g)^\vee = (F_{g^{-1}}(X^\vee), g^{-1})
  \end{equation*}
  Moreover, $\mathcal{C} \rtimes G$ is a pivotal monoidal category with pivotal structure given by
  \begin{equation*}
    \begin{CD}
      (X, g) = X @>{j_X}>> X^{\vee\vee} \cong F_g^{} F_{g^{-1}} (X^{\vee\vee})
      @>{F_g^{}(\xi_{g^{-1};X^{\vee}})}>>  F_{g}^{}(F_{g^{-1}}^{} (X^\vee)^\vee)
      = (X, g)^{\vee\vee}
    \end{CD}
  \end{equation*}
  where $\xi_{g^{-1};V}: F_{g^{-1}}(V^\vee) \to F_{g^{-1}}(V)^{\vee}$ is the duality transform \cite[\S1]{MR2381536} of $F_{g^{-1}}: \mathcal{C} \to \mathcal{C}$.
\end{lemma}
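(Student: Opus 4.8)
The plan is to prove the two assertions separately: first rigidity together with the formula for duals, then the pivotal structure.

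For rigidity I would avoid constructing evaluation and coevaluation morphisms for a general homogeneous object by hand, and instead exploit the factorization $(X,g)\cong (X,1)\otimes(\mathbf 1,g)$, which holds because $(X,1)\otimes(\mathbf 1,g)=(X\otimes F_1(\mathbf 1),g)\cong(X,g)$ using $F_1(\mathbf 1)\cong\mathbf 1$. The full subcategory $\mathcal C\rtimes 1$ is monoidally equivalent to $\mathcal C$ via the identity functor and the coherence isomorphism $F_1\cong\id_\mathcal C$, so each $(X,1)$ has dual $(X^\vee,1)$; and $(\mathbf 1,g)$ is invertible with inverse $(\mathbf 1,g^{-1})$, since $(\mathbf 1,g)\otimes(\mathbf 1,g^{-1})=(\mathbf 1\otimes F_g(\mathbf 1),1)\cong(\mathbf 1,1)$, so in particular $(\mathbf 1,g)$ is dualizable with $(\mathbf 1,g)^\vee\cong(\mathbf 1,g^{-1})$. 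Since the class of dualizable objects is closed under tensor products and duals, $(X,g)^\vee\cong(\mathbf 1,g)^\vee\otimes(X,1)^\vee\cong(\mathbf 1,g^{-1})\otimes(X^\vee,1)\cong(F_{g^{-1}}(X^\vee),g^{-1})$, which is the stated formula; a short bookkeeping with the coherence isomorphisms of the action then produces explicit $\mathrm{ev}$ and $\mathrm{coev}$ whose snake identities reduce to those in $\mathcal C$.

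Computing the double dual from this formula gives $(X,g)^{\vee\vee}=(F_g(F_{g^{-1}}(X^\vee)^\vee),g)$, so a pivotal morphism on the $g$-graded component is a natural isomorphism $X\to F_g(F_{g^{-1}}(X^\vee)^\vee)$. The candidate in the statement is the composite of $j_X\colon X\to X^{\vee\vee}$, the coherence isomorphism $X^{\vee\vee}\cong F_gF_{g^{-1}}(X^{\vee\vee})$, and $F_g(\xi_{g^{-1};X^\vee})$. Each factor is an isomorphism: $\xi_{g^{-1};V}$ is invertible because $F_{g^{-1}}$ lies in $\underline{\mathrm{Aut}}_\mathrm{piv}(\mathcal C)$ and hence is a strong (indeed monoidal) duality preserving equivalence. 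Naturality of the composite follows from naturality of $j$, of $\xi_{g^{-1};-}$, and of the coherence isomorphisms.

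The substantive point, which I expect to be the main obstacle, is that this family is a \emph{monoidal} natural transformation $\id_{\mathcal C\rtimes G}\to(-)^{\vee\vee}$, i.e. genuinely a pivotal structure in the sense of Ng--Schauenburg. One must check compatibility of the candidate with the tensor product $(X,g)\otimes(Y,g')=(X\otimes F_g(Y),gg')$, which unwinds into one large diagram assembled from three ingredients: (i) the hexagon expressing that the duality transform $\xi_{g;-}$ of the monoidal functor $F_g$ is compatible with the monoidal constraint of $F_g$, so that $\xi_{g;X\otimes Y}$ factors through $\xi_{g;X}$ and $\xi_{g;Y}$; (ii) the compatibility of $\xi_{x;-}$, $\xi_{y;-}$, $\xi_{xy;-}$ under the coherence isomorphism $F_xF_y\cong F_{xy}$, and the analogous statement for $F_1\cong\id_\mathcal C$; and (iii) the condition that each $F_g$ preserves the pivotal structure, i.e. the square \eqref{eq:duality-2} (with $\xi$ an isomorphism) commutes for $F=F_g$. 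Combining these is lengthy but purely formal; I would organize it by first treating the extreme cases $X=\mathbf 1$ (invertible objects, handled by (iii)) and $g=g'=1$ (the copy of $\mathcal C$, handled since $\mathcal C$ is pivotal), and then reducing the general case along the factorization $(X,g)\cong(X,1)\otimes(\mathbf 1,g)$. Finally, the pivotal axiom $(j_{(X,g)})^\vee\circ j_{(X,g)^\vee}=\id$ is inherited from the corresponding axiom for $j$ in $\mathcal C$ through the same reduction.
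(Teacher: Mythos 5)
Your plan is sound, and it is worth noting at the outset that the paper does not actually supply a proof to compare against: it states only that the lemma ``is easy to prove'' when the action is strict and that ``the proof for general cases is tedious and omitted for brevity.'' The implicit approach there is direct verification (write down evaluation and coevaluation for $(X,g)$ and check the snake and monoidality identities by hand, which collapses to the identities in $\mathcal{C}$ when all coherence data are identities). Your route is genuinely different and better organized for the non-strict case: the factorization $(X,g)\cong(X,1)\otimes(\mathbf{1},g)$, together with the facts that $(\mathbf{1},g)$ is invertible, that $X\mapsto(X,1)$ is a strong monoidal embedding of $\mathcal{C}$, and that dualizable objects are closed under tensor product, yields rigidity and the formula $(X,g)^\vee\cong(F_{g^{-1}}(X^\vee),g^{-1})$ without writing any explicit duality morphisms; iterating gives $(X,g)^{\vee\vee}=(F_g(F_{g^{-1}}(X^\vee)^\vee),g)$, matching the target of the stated composite. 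You also correctly isolate the one substantive point --- monoidality of the candidate $j_{(X,g)}$ --- and the three ingredients needed: compatibility of the duality transform $\xi_{g;-}$ with the monoidal constraint of $F_g$, its compatibility with the isomorphisms $F_xF_y\cong F_{xy}$ and $F_1\cong\id_{\mathcal{C}}$, and the pivotal-preserving square \eqref{eq:duality-2} for each $F_g$ (which is exactly what membership in $\underline{\mathrm{Aut}}_{\mathrm{piv}}(\mathcal{C})$ provides, and is what relates $j_{F_g(Y)}$ to $F_g(j_Y)$ in the mixed case). One small caution in the final reduction: besides the two ``extreme'' families of pairs you name, the regrouping of $(X,1)\otimes(\mathbf{1},g)\otimes(Y,1)\otimes(\mathbf{1},g')$ also forces you through pairs of the form $((\mathbf{1},g),(\mathbf{1},g'))$ and the exchange $(\mathbf{1},g)\otimes(Y,1)\cong(F_g(Y),1)\otimes(\mathbf{1},g)$; these are covered by your ingredient (ii) and (iii) respectively, but should be listed explicitly among the cases to be checked. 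With that bookkeeping made precise, the argument is complete, and the axiom $(j_{(X,g)})^\vee\circ j_{(X,g)^\vee}=\id$ does follow formally once $j$ is a monoidal natural isomorphism.
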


In the most important case for us where the action of $G$ is strict, this lemma is easy to prove. The proof for general cases is tedious and omitted for brevity.

\begin{definition}
  Let $\mathcal{C}$ be a $k$-linear pivotal monoidal category satisfying~\eqref{eq:assumption-fin-dim} and suppose that an action of $G$ on $\mathcal{C}$ is given. For a positive integer $n$ and an element $g \in G$, we define the {\em $g$-twisted $n$-th FS indicator $\nu_n^g(V)$} of $V \in \mathcal{C}$ by $\nu_n^g(V) = \nu_n((V, g))$, where $\nu_n$ in the right-hand side stands for the $n$-th FS indicator of Ng and Schauenburg \cite{MR2381536} for the $k$-linear pivotal monoidal category $\mathcal{C} \rtimes G$.  
\end{definition}

For a pair of positive integers $(n, r)$, the $(n, r)$-th FS indicator $\nu_{n,r}$ is also defined in \cite{MR2381536}. It is clear how to define the {\em $g$-twisted $(n, r)$-th FS indicator $\nu_{n,r}^g$} of $V \in \mathcal{C}$.

We explain that our definition agrees with that of Sage and Vega \cite{MR2879228}. For simplicity, we treat all monoidal categories as if they were strict. Note that, as an object of $\mathcal{C}$, we have
\begin{equation*}
  (V, g)^{\otimes n} = 
  V \otimes F_g^{}(V) \otimes F_g^2(V) \otimes \dotsb \otimes F_g^{n - 1} (V)
  \quad (:= \widetilde{V^{\otimes n}})
\end{equation*}
Now let $H$ be a finite-dimensional semisimple Hopf algebra over an algebraically closed field $k$ of characteristic zero. Then the group $G = \mathrm{Aut}_{\text{Hopf}}(H)^{\op}$ naturally acts on $\mathcal{C} = \fdMod(H)$. If $g: H \to H$ is an automorphism such that $g^n = \id_H$, then the map
\begin{equation*}
  E_{(V, g)}^{(n)}: \Hom_{\mathcal{C} \rtimes G}(\mathbf{1}_{\mathcal{C} \rtimes G}, (V, g)^{\otimes n})
  \to \Hom_{\mathcal{C} \rtimes G}(\mathbf{1}_{\mathcal{C} \rtimes G}, (V, g)^{\otimes n})
\end{equation*}
used to define the $n$-th FS indicator in \cite{MR2381536} coincides with the map
\begin{equation*}
  \alpha: (\widetilde{V^{\otimes n}})^H \to (\widetilde{V^{\otimes n}})^H, \quad
  \sum_{i_1, \dotsc, i_n} v_{i_1}^1 \otimes v_{i_2}^2 \dotsb \otimes v_{i_n}^n \mapsto
  \sum_{i_1, \dotsc, i_n} v_{i_1}^2 \otimes \dotsb \otimes v_{i_n}^n \otimes v_{i_1}^1
\end{equation*}
under the canonical identification $(\widetilde{V^{\otimes n}})^H = \Hom_{\mathcal{C} \rtimes G}(\mathbf{1}_{\mathcal{C} \rtimes G}, (V, g)^{\otimes n})$. Since the twisted FS indicator of \cite{MR2879228} is equal to the trace of the map $\alpha$ \cite[Theorem~3.5]{MR2879228}, our definition agrees with that \cite{MR2879228} in the case where both are defined.

Recall that a pivotal monoidal category is a category with duality. If $G = \langle a \mid a^2 = 1 \rangle$ acts on a $k$-linear pivotal monoidal category $\mathcal{C}$, then the functor $F_a: \mathcal{C} \to \mathcal{C}$ is naturally an involution of $\mathcal{C}$ in the sense of Definition~\ref{def:duality-involution}. Let $\mathbf{t}$ denote this involution. By using the crossed product $\mathcal{C} \rtimes G$, the category $\mathcal{C}^\mathbf{t}$ constructed in Lemma~\ref{lem:twisting-adjunction} can be described as follows:

\begin{proposition}
  $\mathcal{C}^{\mathbf{t}} \to \mathcal{C} \rtimes G$, $X \mapsto (X, a)$ is a $k$-linear fully faithful duality preserving functor.
\end{proposition}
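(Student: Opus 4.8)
The plan is to introduce the obvious functor $\Phi\colon \mathcal{C}^{\mathbf{t}} \to \mathcal{C} \rtimes G$, $X \mapsto (X, a)$ (sending $f \in \Hom_{\mathcal{C}^{\mathbf{t}}}(X, Y) = \Hom_\mathcal{C}(X, Y)$ to the same morphism regarded in $\mathcal{C} \rtimes a$), to observe that its $k$-linearity and full faithfulness are automatic, and to put the real effort into the duality-preserving structure. Since, as a $k$-linear category, $\mathcal{C} \rtimes G = \bigoplus_{g \in G} \mathcal{C} \rtimes g$ with each summand $\mathcal{C} \rtimes g$ a copy of $\mathcal{C}$ embedded as a full subcategory (there are no morphisms between distinct summands), $\Phi$ is $k$-linear and fully faithful essentially by construction -- in fact it is an equivalence of $\mathcal{C}$ onto $\mathcal{C} \rtimes a$. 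So all the content lies in equipping $\Phi$ with a natural transformation $\xi'_X\colon \Phi(X^\sharp) \to \Phi(X)^\vee$ making the coherence square \eqref{eq:duality-2} commute.

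The key observation is that $\xi'$ can be taken to be the \emph{identity}. Recall that the twisting adjunction attached to the involution $\mathbf{t} = F_a$ is $(F_a, F_a, \eta, \eta^{-1}, \xi_a)$, so the twisted duality functor of $\mathcal{C}^{\mathbf{t}}$ is $(-)^\sharp = F_a \circ (-)^\vee$. On the other hand, Lemma~\ref{lem:crossed-pivotal} gives $(X, a)^\vee = (F_{a^{-1}}(X^\vee), a^{-1})$; since $a^{-1} = a$ in $G$, so that $F_{a^{-1}} = F_a$ literally as functors, this is exactly $(F_a(X^\vee), a) = \Phi(X^\sharp)$, and the analogous comparison using the formula for duals of morphisms in $\mathcal{C} \rtimes G$ gives $\Phi(f^\sharp) = \Phi(f)^\vee$. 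Hence $\Phi \circ (-)^\sharp = (-)^\vee \circ \Phi$ on the nose, so $\xi' := \id$ is a legitimate (natural, invertible) choice, and \eqref{eq:duality-2} with $\xi' = \id$ collapses to the single equation $\Phi(\omega_X) = J_{(X,a)}$, where $\omega$ is the pivotal morphism of $\mathcal{C}^{\mathbf{t}}$ from \eqref{eq:twisted-piv} and $J$ is that of $\mathcal{C} \rtimes G$ from Lemma~\ref{lem:crossed-pivotal}.

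To establish this equation I would unwind both sides in $\mathcal{C}$. Specializing the formula of Lemma~\ref{lem:crossed-pivotal} at $g = a = a^{-1}$ gives
\[
  J_{(X,a)} = F_a(\xi_{a; X^\vee}) \circ \theta_{X^{\vee\vee}} \circ j_X ,
\]
where $\theta\colon \id_\mathcal{C} \xrightarrow{\ \sim\ } F_a F_{a^{-1}} = F_a F_a$ is the structural isomorphism of the $G$-action; on the other hand, for the involution $\mathbf{t}$ (with $F = G = F_a$, $\xi = \xi_a$ and unit $\eta$), formula \eqref{eq:twisted-piv} reads
\[
  \omega_X = F_a(\xi_{a; X^\vee}) \circ F_a F_a(j_X) \circ \eta_X = F_a(\xi_{a; X^\vee}) \circ \eta_{X^{\vee\vee}} \circ j_X ,
\]
the second equality by naturality of $\eta\colon \id_\mathcal{C} \to F_a F_a$. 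Thus $\Phi(\omega_X) = J_{(X,a)}$ reduces to the identity $\theta = \eta$, i.e.\ to the assertion that the unit of the involution $F_a$ extracted from the action in Definition~\ref{def:duality-involution} is exactly the structural isomorphism $\id_\mathcal{C} \cong F_1 \cong F_a F_a$ coming from the $G$-action. This is the one point that requires care, and I expect it to be the main obstacle: in the strict case it is immediate because every isomorphism in sight is an identity, while in the general weak case it is a routine but -- in the spirit of the proof of Lemma~\ref{lem:crossed-pivotal} -- somewhat tedious coherence check. Granting it, $(\Phi, \id)$ is a strict duality preserving functor, and it is $k$-linear and fully faithful as shown above, which proves the proposition.
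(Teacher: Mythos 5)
Your proposal is correct and follows essentially the same route as the paper, which simply declares the proposition clear from Lemma~\ref{lem:crossed-pivotal} and the definition of $\mathcal{C}^{\mathbf{t}}$: identifying $\Phi(X^\sharp)$ with $(X,a)^\vee$ via $a=a^{-1}$, taking $\xi'=\id$, and reducing \eqref{eq:duality-2} to $\Phi(\omega_X)=J_{(X,a)}$ is exactly the intended comparison. The residual identity $\theta=\eta$ that you flag as the main obstacle in fact holds by construction, since the unit of the involution obtained from the $G$-action is precisely the structural isomorphism $\id_{\mathcal{C}}\cong F_1\cong F_aF_a$, so nothing further needs to be checked.
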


This is clear from Lemma~\ref{lem:crossed-pivotal} and the definition of $\mathcal{C}^\mathbf{t}$.

\section{Pivotal Algebras}
\label{sec:piv-alg}

\subsection{Pivotal Algebras}

In this section, we introduce and study a class of algebras such that its representation category is a category with strong duality. We first recall that $\fdMod(H)$ is a pivotal monoidal category if $H$ is a pivotal Hopf algebra \cite{KMN09}. Note that the monoidal structure of $\fdMod(H)$ is defined by using the comultiplication of $H$. Since we do not need a monoidal structure, it seems to be a good way to consider ``pivotal Hopf algebras with no comultiplication''. This is the notion of {\em pivotal algebras}, which is formally defined as follows:

\begin{definition}
  \label{def:piv-alg}
  A {\em pivotal algebra} is a triple $(A, S, g)$ consisting of an algebra $A$, an anti-algebra map $S: A \to A$, and an invertible element $g \in A$ satisfying
  $S(g) = g^{-1}$ and $S^2(a) = g a g^{-1}$ for all $a \in A$.
\end{definition}

Let $A = (A, S, g)$ be a pivotal algebra. We denote by $\Mod(A)$ the category of left $A$-modules and by $\fdMod(A)$ its full subcategory of finite-dimensional modules. Given $V \in \Mod(A)$, we can make its dual space $V^\vee$ into a left $A$-module by
\begin{equation}
  \label{eq:dual-module}
  \langle a f, v \rangle := \langle f, S(a)v \rangle \quad (a \in A, f \in V^\vee, v \in V)
\end{equation}
The assignment $V \mapsto V^\vee$ extends to a contravariant endofunctor on $\Mod(A)$. Now, for  each $V \in \Mod(A)$, we define $j_V: V \to V^{\vee\vee}$ by
\begin{equation}
  \label{eq:piv-mor-alg}
  \langle j_V(v), f \rangle = \langle f, g v \rangle
  \quad (v \in V, f \in V^\vee)
\end{equation}
The following computation shows that $j_V: V \to V^{\vee\vee}$ is $A$-linear:
\begin{gather*}
  \langle a \cdot j_V(v), f \rangle
  = \langle S(a) f, g v \rangle
  = \langle f, S^2(a) g v \rangle
  = \langle f, g a v \rangle
  = \langle j_V(a v), f \rangle
\end{gather*}
It is obvious that $j_V$ is natural in $V \in \Mod(A)$. Now we verify~\eqref{eq:duality-1} as follows:
\begin{equation*}
  \langle (j_V^{})^\vee j_{V^\vee}^{}(f), v \rangle
  = \langle j_{V^\vee}^{}(f), j_V^{}(v) \rangle
  = \langle g f, g v \rangle
  = \langle f, S(g) g v \rangle
  = \langle f, v \rangle
\end{equation*}
Note that $j_V$ is an isomorphism if and only if $\dim_k V < \infty$. We conclude:

\begin{proposition}
  \label{sec:piv-alg-cat-dual}
  Let $A$ be a pivotal algebra. Then $\Mod(A) = (\Mod(A), (-)^\vee, j)$ is an Abelian category with duality over $k$. The full subcategory $\fdMod(A)$ is an Abelian category with strong duality over $k$.
\end{proposition}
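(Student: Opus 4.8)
The plan is to verify directly that the data $(\Mod(A),(-)^\vee,j)$ satisfies every clause of Definition~\ref{def:cat-with-duality}, relying on the fact that the needed equations on $A$-modules are inherited from the corresponding statements on underlying $k$-vector spaces together with the $A$-linearity checks already carried out in the text immediately preceding the statement. First I would confirm that $(-)^\vee$ is a well-defined contravariant $k$-linear endofunctor of $\Mod(A)$: the formula $\langle af,v\rangle=\langle f,S(a)v\rangle$ gives a left action because $S$ is an \emph{anti}-algebra map, so $\langle (ab)f,v\rangle=\langle f,S(ab)v\rangle=\langle f,S(b)S(a)v\rangle=\langle a(bf),v\rangle$; on morphisms one sets $\phi^\vee=\phi^*$ (the $k$-linear transpose), which is automatically $A$-linear and reverses composition. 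Linearity over $k$ and the finiteness condition~\eqref{eq:assumption-fin-dim} for $\fdMod(A)$ are clear since $\Hom_A(V,W)\subseteq\Hom_k(V,W)$.

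Next I would observe that the paragraph preceding the statement already establishes the three substantive points: (i) $j_V$ is $A$-linear, via the computation $\langle a\cdot j_V(v),f\rangle=\langle f,S^2(a)gv\rangle=\langle f,gav\rangle=\langle j_V(av),f\rangle$, where the middle step uses $S^2(a)g=ga$, i.e. the relation $S^2(a)=gag^{-1}$; (ii) $j$ is natural in $V$, which is immediate from the defining formula; and (iii) the duality axiom~\eqref{eq:duality-1}, namely $(j_V)^\vee\circ j_{V^\vee}=\id_{V^\vee}$, via $\langle (j_V)^\vee j_{V^\vee}(f),v\rangle=\langle f,S(g)gv\rangle=\langle f,v\rangle$, using $S(g)=g^{-1}$. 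Hence $(\Mod(A),(-)^\vee,j)$ is a category with duality over $k$, and it is Abelian because $\Mod(A)$ is the module category of a ring.

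For the second assertion I would pass to $\fdMod(A)$. This is a full $k$-linear Abelian subcategory closed under the duality functor, since $\dim_k V^\vee=\dim_k V$ when $V$ is finite-dimensional, so it is a category with duality in its own right. To see that the duality is \emph{strong} it suffices, by Definition~\ref{def:cat-with-duality}, to check that $j_V$ is an isomorphism for every finite-dimensional $V$. But $j_V$ is the composite of the canonical isomorphism $\iota_V\colon V\to V^{\vee\vee}$ of~\eqref{eq:k-Vec-piv-mor} with the automorphism of $V$ given by the action of the invertible element $g$; both factors are bijective precisely when $\dim_k V<\infty$ (and $\iota_V$ is never surjective otherwise), so $j_V$ is an isomorphism exactly in the finite-dimensional case. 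This simultaneously records the remark that $j_V$ is an isomorphism iff $\dim_k V<\infty$, and completes the proof.

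I do not anticipate a serious obstacle: every step is a short diagram or element computation, and the only place where care is needed is keeping straight that $S$ reverses multiplication (so that~\eqref{eq:dual-module} really is a left action) and that the two defining relations $S(g)=g^{-1}$ and $S^2(a)=gag^{-1}$ are exactly what make $j_V$ $A$-linear and what make~\eqref{eq:duality-1} hold; these have in fact already been dispatched in the lines above the statement, so the proof amounts to assembling them.
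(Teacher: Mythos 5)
Your proposal is correct and follows essentially the same route as the paper: the paper's proof consists precisely of the element computations displayed just before the statement (the $A$-linearity of $j_V$, its naturality, and the verification of \eqref{eq:duality-1}), which you reproduce and supplement with the routine checks that \eqref{eq:dual-module} is a left action because $S$ is an anti-algebra map and that $j_V=\iota_V\circ\rho_V(g)$ is invertible exactly when $\dim_k V<\infty$. Nothing is missing.
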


Let $A$ and $B$ be algebras. Given an algebra map $f: A \to B$, we can make each left $B$-module into a left $A$-module by defining $a \cdot v = f(a) v$ ($a \in A, v \in V$). We denote by $f^\natural(V)$ the left $A$-module obtained in this way from $V$. The assignment $V \mapsto f^\natural(V)$ extends to a functor
\begin{equation}
  \label{eq:pull-back-func}
  f^\natural: \Mod(B) \to \Mod(A)
\end{equation}
By restriction, we also obtain a functor
\begin{equation}
  \label{eq:pull-back-func-fd}
  f^\natural|_\fdim: \fdMod(B) \to \fdMod(A)
\end{equation}
It is easy to see that these functors are $k$-linear, exact and faithful. If, moreover, $f$ is surjective, then they are full.

Suppose that $A = (A, S, g)$ and $B = (B, S', g')$ are pivotal algebras. A {\em morphism of pivotal algebras} from $A$ to $B$ is an algebra map $f: A \to B$ satisfying $f(g) = g'$ and $S'(f(a)) = f(S(a))$ for all $a \in A$. If $f$ is such a morphism, then the Functors \eqref{eq:pull-back-func} and \eqref{eq:pull-back-func-fd} are strict duality preserving functors.

An {\em involution} of $A$ is a morphism $\tau: A \to A$ of pivotal algebras such that $\tau^2 = \id_A$. Such a $\tau$ gives rise to a strict involution of $\Mod(A)$, which is usually denoted by the same symbol $\tau$. The proof of the following proposition is straightforward and omitted.

\begin{proposition}
  Let $A = (A, S, g)$ be a pivotal algebra, and let $\tau$ be an involution of $A$. Put  $S^\tau = S \circ \tau \, (=\tau \circ S)$. Then: \\
  \indent {\rm (a)} The triple $A^\tau = (A, S^\tau, g)$ is a pivotal algebra. \\
  \indent {\rm (b)} $\id_{\Mod(A)}$ is a strict duality preserving functor $\Mod(A^\tau) \to \Mod(A)^\tau$.
\end{proposition}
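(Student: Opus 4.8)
The plan is to prove both statements by straightforward unwinding of definitions; I do not anticipate a genuine obstacle, only the need for care in identifying what the category $\Mod(A)^\tau$ actually is.

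For part (a), I would first record the properties of $\tau$ available from its being a morphism of pivotal algebras $A \to A$ with $\tau^2 = \id_A$, namely: $\tau$ is an algebra map, $\tau(g) = g$, and $\tau \circ S = S \circ \tau$. From these, $S^\tau = S \circ \tau$ is an anti-algebra map, being the composite of the anti-algebra map $S$ with the algebra map $\tau$; moreover $S^\tau(g) = S(\tau(g)) = S(g) = g^{-1}$, and $(S^\tau)^2 = S \tau S \tau = S^2 \tau^2 = S^2$, whence $(S^\tau)^2(a) = S^2(a) = g a g^{-1}$ for all $a \in A$. This is exactly Definition~\ref{def:piv-alg} for the triple $(A, S^\tau, g)$.

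For part (b), I would begin by making the involution $\mathbf{t}$ of $\mathcal{C} = \Mod(A)$ attached to $\tau$ explicit. It is strict: its underlying functor is the pullback $F = \tau^\natural$ (which is also the adjoint $G$, since for an involution $G = F$), the duality transform $\xi$ is the identity because $\tau$ is a morphism of pivotal algebras, and the unit $\eta \colon \id \to F \circ F$ is the identity because $\tau^2 = \id_A$ makes $F \circ F = \id_{\Mod(A)}$. Substituting this into the construction of Lemma~\ref{lem:twisting-adjunction}, the twisted duality functor of $\Mod(A)^\tau = \mathcal{C}^\mathbf{t}$ is $X^\sharp = G(X^\vee) = (X^\vee)^\tau$, and the twisted pivotal morphism $\omega_X$ of \eqref{eq:twisted-piv} collapses---since $\eta$ and $\xi$ are identities and $F \circ F = \id$---to the original morphism $j_X$ of \eqref{eq:piv-mor-alg}.

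Finally I would compare this with $\Mod(A^\tau)$, which has the same underlying category $\Mod(A)$. On duality functors: the $A^\tau$-dual of $V$ has action $\langle a f, v\rangle = \langle f, S^\tau(a) v\rangle$ by \eqref{eq:dual-module}, while $(V^\vee)^\tau = \tau^\natural(V^\vee)$ has action $\langle a f, v\rangle = \langle \tau(a) f, v\rangle = \langle f, S(\tau(a)) v\rangle = \langle f, S^\tau(a) v\rangle$; the two coincide verbatim. On pivotal morphisms: that of $\Mod(A^\tau)$ is given by \eqref{eq:piv-mor-alg} with the same element $g$, hence equals $j_V = \omega_V$. Thus $\Mod(A^\tau)$ and $\Mod(A)^\tau$ carry identical duality data on the common underlying category, so the identity functor together with $\xi = \id$ is a strict duality preserving functor between them, and the defining square \eqref{eq:duality-2} commutes because all its arrows agree. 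The only step requiring attention is the explicit evaluation of $(-)^\sharp$ and $\omega$ from the general formula for $\mathcal{C}^\mathbf{t}$ in the strict-involution case, which I would carry out before making the comparison.
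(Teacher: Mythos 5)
Your proof is correct. The paper explicitly omits the proof as straightforward, and your argument is exactly the intended direct verification: part (a) from $\tau S = S\tau$, $\tau(g)=g$, $\tau^2=\id$, and part (b) by evaluating the twisted duality data $((-)^\sharp,\omega)$ of Lemma~\ref{lem:twisting-adjunction} in the strict-involution case and matching it term by term with the duality data of $\Mod(A^\tau)$.
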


This implies that the $\tau$-twisted FS indicator of $V \in \fdMod(A)$ is equal to the untwisted FS indicator of $V$ regarded as a left $A^\tau$-module. Thus, in principle, the theory of the $\tau$-twisted FS indicator reduces to that of the untwisted FS indicator of $A^\tau$, which is again a pivotal algebra.

\subsection{FS Indicator for Pivotal Algebras}

Let $A = (A, S, g)$ be a pivotal algebra, and let $V \in \fdMod(A)$. Since $\fdMod(A)$ is a category with duality over $k$ satisfying \eqref{eq:assumption-fin-dim}, we can define $\nu(V)$ in the way of Section \ref{sec:categ-with-dual}.

The FS indicator $\nu(V)$ is interpreted as follows: Let $\Bil(V)$ be the set of all bilinear forms on $V$. Recall that there is a canonical isomorphism
\begin{equation}
  \label{eq:canonical-map}
  B_V: \Hom_k(V, V^\vee) \to \Bil(V), \quad
  B_V(f)(v, w) = \langle f(v), w \rangle
\end{equation}
Let $\Bil_A(V)$ be the subset of $\Bil(V)$ consisting of those $b \in \Bil(V)$ such that
\begin{equation}
  \label{eq:S-adjoint-bilin}
  b(a v, w) = b(v, S(a)w)
  \quad (a \in A, v, w \in V)
\end{equation}
The set $\Bil_A(V)$ is in fact the image of $\Hom_A(V,V^\vee) \subset \Hom_k(V, V^\vee)$ under the canonical isomorphism \eqref{eq:canonical-map}. Now we define $\Sigma_V: \Bil_A(V) \to \Bil_A(V)$ so that
\begin{equation*}
  \begin{CD}
    \Hom_A(V, V^\vee) @>{B_V}>> \Bil_{A}(V) \\
    @V{\Trans_{V,V}}VV @VV{\Sigma_V}V \\
    \Hom_A(V, V^\vee) @>>{B_V}> \Bil_{A}(V)
  \end{CD}
\end{equation*}
commutes. If $b = B_V(f)$ for some $f \in \Hom_A(V, V^\vee)$, then we have
\begin{equation}
  \label{eq:transposition-bilin-1}
  \begin{aligned}
    \Sigma_V(b)(v, w)
    = B_V(f^\vee j_V)(v, w)
    = \langle f^\vee j_V(v), w \rangle
    = \langle f(w), g v \rangle
    = b(w, g v)
  \end{aligned}
\end{equation}
for all $v, w \in V$. In view of \eqref{eq:transposition-bilin-1}, we set
\begin{equation*}
  \Bil_A^{\pm}(V) = \{ b \in \Bil_A(V) \mid \text{$b(w, g v) = \pm b(v, w)$ for all $v, w \in V$} \}
\end{equation*}
Then, as a counterpart of Proposition~\ref{prop:FS-ind-basic} (b), we have a formula
\begin{equation}
  \label{eq:FS-ind-bilin}
  \nu(V) = \dim_k \Bil_A^+(V) - \dim_k \Bil_A^-(V)
\end{equation}
Rephrasing the results of Section 2 by using these notations, we immediately obtain the following theorem:

\begin{theorem}
  \label{thm:piv-FS}
  If $V \in \fdMod(A)$ is absolutely simple, then we have $\nu(V) \in \{ 0, \pm 1 \}$.
  Moreover, the following are equivalent: \\
  \indent {\rm (1)} $\nu(V) \ne 0$. \\
  \indent {\rm (2)} $V$ is isomorphic to $V^\vee$ as a left $A$-module. \\
  \indent {\rm (3)} There exists a non-degenerate bilinear form $b$ on $V$ satisfying~\eqref{eq:S-adjoint-bilin}. \\
  If one of the above statements holds, then such a bilinear form $b$ is unique up to scalar multiples and satisfies $b(w, g v) = \nu(V) \cdot b(v, w)$ for all $v, w \in V$.
\end{theorem}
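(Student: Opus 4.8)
\emph{Proof proposal.} The plan is to deduce everything from Proposition~\ref{prop:FS-ind-abs-simple}, applied to the Abelian category with strong duality $\fdMod(A)$ (Proposition~\ref{sec:piv-alg-cat-dual}, which satisfies~\eqref{eq:assumption-fin-dim}), together with the dictionary between $\Hom_A(V, V^\vee)$ and $\Bil_A(V)$ furnished by the canonical isomorphism $B_V$ of~\eqref{eq:canonical-map} and the identity~\eqref{eq:transposition-bilin-1}. In other words, the theorem is simply a rereading of Section~\ref{sec:categ-with-dual} through these identifications, and the task is to set up the translation carefully.

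First, since $V$ is absolutely simple as an object of $\fdMod(A)$, Proposition~\ref{prop:FS-ind-abs-simple}(b) immediately gives $\nu(V) \in \{0, \pm 1\}$ together with the equivalence of statement (1) with statement (2). It then remains to match (2) with (3) and to establish the uniqueness and symmetry claims. For (2)$\Rightarrow$(3): an $A$-module isomorphism $f \colon V \to V^\vee$ yields $b = B_V(f) \in \Bil_A(V)$ (recall that $\Bil_A(V)$ is exactly the image of $\Hom_A(V, V^\vee)$ under $B_V$), and $b$ is non-degenerate because $f$ is injective. Conversely, a non-degenerate $b$ satisfying~\eqref{eq:S-adjoint-bilin} equals $B_V(f)$ for some $f \in \Hom_A(V, V^\vee)$, and non-degeneracy forces $f$ to be injective, hence bijective since $\dim_k V = \dim_k V^\vee < \infty$; thus $V \cong V^\vee$ and (3)$\Rightarrow$(2).

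For the final assertion, assume the equivalent conditions hold. Since $V$ is absolutely simple and $V \cong V^\vee$, Schur's lemma gives $\dim_k \Hom_A(V, V^\vee) = \dim_k \End_A(V) = 1$, hence $\dim_k \Bil_A(V) = 1$ via $B_V$; in particular a non-degenerate $b$ as in (3) is unique up to a scalar. On this one-dimensional space the involution $\Sigma_V$ (which is conjugate to $\Trans_{V,V}$ by $B_V$, and which squares to the identity by~\eqref{eq:transposition-3}) acts as multiplication by $+1$ or $-1$, and its trace equals $\nu(V) = \Trace(\Trans_{V,V})$. Therefore $\Sigma_V(b) = \nu(V)\, b$, which by~\eqref{eq:transposition-bilin-1} is exactly the relation $b(w, g v) = \nu(V)\, b(v, w)$ for all $v, w \in V$.

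I do not expect any real obstacle: the substantive content is already proved in Section~\ref{sec:categ-with-dual} and only needs to be transported through the canonical identifications. The one point deserving slight care is the passage between ``non-degenerate bilinear form'' and ``isomorphism $V \to V^\vee$'', where finite-dimensionality is used to upgrade injectivity to bijectivity; everything else is read off directly from Proposition~\ref{prop:FS-ind-abs-simple} and~\eqref{eq:transposition-bilin-1}.
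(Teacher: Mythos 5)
Your proposal is correct and follows exactly the route the paper intends: the paper gives no explicit proof, stating only that the theorem is obtained by ``rephrasing the results of Section~2'' via the identifications $B_V$ and~\eqref{eq:transposition-bilin-1}, and your argument is precisely that rephrasing, with Proposition~\ref{prop:FS-ind-abs-simple}(b) supplying $\nu(V)\in\{0,\pm1\}$ and (1)$\Leftrightarrow$(2), and Schur's lemma plus the involutivity of $\Sigma_V$ on the one-dimensional space $\Bil_A(V)$ supplying the uniqueness and the relation $b(w,gv)=\nu(V)\,b(v,w)$. The details you fill in (non-degeneracy versus bijectivity of $f\colon V\to V^\vee$ in finite dimension) are exactly the right ones.
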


We denote by $\reg_A$ the left regular representation of $A$. If $A$ is finite-dimensional, then the FS indicator of $\reg_A$ is defined. In the case where $A = k G$ is the group algebra of a finite group $G$, there is a well-known formula $\nu(\reg_{k G}) = \# \{ x \in G \mid x^2 = 1 \}$. This formula is generalized to finite-dimensional pivotal algebras as follows:

\begin{theorem}
  \label{thm:piv-Tr-S}
  Let $A = (A, S, g)$ be a finite-dimensional pivotal algebra. \\
  \indent {\rm (a)} $\nu(\reg_A) = \Trace(Q)$, where $Q: A \to A$, $Q(a) = S(a)g$. \\
  \indent {\rm (b)} If $A$ is Frobenius, then $\reg_A \cong \reg_A^\vee$ as left $A$-modules. Hence, $\nu(\reg_A) = \nu(\reg_A^\vee)$.
\end{theorem}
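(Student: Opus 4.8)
The plan is to treat the two parts separately. Part (a) is a direct trace computation once $\Hom_A(\reg_A,\reg_A^\vee)$ is identified with $A^\vee$; part (b) is proved by exhibiting an explicit isomorphism $\reg_A\cong\reg_A^\vee$ of left $A$-modules, after which the equality of indicators is immediate from Proposition~\ref{prop:FS-ind-basic}(a).

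For part (a), I would use the natural linear isomorphism $\Hom_A(\reg_A,\reg_A^\vee)\cong A^\vee$ sending a morphism $f$ to $f(1)$, whose inverse sends $\phi\in A^\vee$ to the map $a\mapsto a\phi$; here $A^\vee$ carries the module structure~\eqref{eq:dual-module}. Transporting the transposition map $\Trans_{\reg_A,\reg_A}(f)=f^\vee\circ j_{\reg_A}$ (see~\eqref{eq:transposition-2}) along this isomorphism, I would evaluate the morphism $\Trans_{\reg_A,\reg_A}(f)$ at $1\in A$ and pair with an arbitrary $a\in A$, unwinding the definitions step by step: $f^\vee$ is the ordinary transpose on underlying vector spaces, $j_{\reg_A}(1)$ is the functional $h\mapsto\langle h,g\rangle$ on $A^\vee$ by~\eqref{eq:piv-mor-alg}, and the twisted action~\eqref{eq:dual-module} yields $\langle a\,f(1),g\rangle=\langle f(1),S(a)g\rangle$. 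Thus $\phi=f(1)$ is sent to $\phi\circ Q$, where $Q(a)=S(a)g$; in other words $\Trans_{\reg_A,\reg_A}$ is conjugate to the transpose $Q^\vee\colon A^\vee\to A^\vee$ of $Q$. Since conjugate operators, and an operator and its transpose, have equal traces, we get $\nu(\reg_A)=\Trace(\Trans_{\reg_A,\reg_A})=\Trace(Q^\vee)=\Trace(Q)$. (As a check, when $A=kG$ one has $g=1$, $Q$ is the antipode, and $\Trace(Q)=\#\{x\in G\mid x^2=1\}$.)

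For part (b), I would first record that, $A$ being finite-dimensional, $S$ is bijective, with inverse $S^{-1}(a)=g^{-1}S(a)g$, and that $S^{-1}$ is again an anti-algebra map. Recall that $A$ is Frobenius exactly when ${}_A A\cong{}_A(A^{\ast})$ as left $A$-modules, where $A^{\ast}=\Hom_k(A,k)$ carries the left action $(c\cdot f)(b)=f(bc)$ dual to the right regular action. On the other hand, by~\eqref{eq:dual-module}, the module $\reg_A^\vee$ is $\Hom_k(A,k)$ equipped with the left action $(c\cdot f)(v)=f(S(c)v)$. I would then verify that $f\mapsto f\circ S^{-1}$ defines an isomorphism of left $A$-modules from ${}_A(A^{\ast})$ onto $\reg_A^\vee$; the verification uses only that $S^{-1}$ reverses products. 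Composing with the Frobenius isomorphism gives $\reg_A\cong\reg_A^\vee$, and hence $\nu(\reg_A)=\nu(\reg_A^\vee)$ by Proposition~\ref{prop:FS-ind-basic}(a).

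All the calculations are routine; the only real care I expect to be needed is in the bookkeeping. In (a), one must track $f^\vee$, $j_{\reg_A}$, and the several evaluation pairings through the identification $\Hom_A(\reg_A,\reg_A^\vee)\cong A^\vee$ in the right order of composition; in (b), one must keep straight which of the several natural left actions on $\Hom_k(A,k)$ is intended at each stage and match conventions for the word ``Frobenius''.
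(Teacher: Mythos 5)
Your proposal is correct and follows essentially the same route as the paper: part (a) is the identical computation conjugating $\Trans_{\reg_A,\reg_A}$ into $Q^\vee$ via $f\mapsto f(1)$, and part (b) produces the same isomorphism $\langle f(a),b\rangle=\phi(S(a)b)$ (the paper writes the composite directly rather than factoring it through the standard Frobenius isomorphism and the twist $f\mapsto f\circ S^{-1}$, but the content is the same). The only point worth making explicit, which the paper glosses over with ``it is obvious that $f$ is bijective,'' is the bijectivity of $S$, which you correctly note follows from $S^2(a)=gag^{-1}$.
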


The part (b) is motivated by Remark~\ref{rem:FS-higher}; as we have mentioned, our definition of the FS indicator is different from that of \cite{MR2381536}. Therefore, if, for example, $A$ is a finite-dimensional pivotal Hopf algebra, then there are two definitions of the FS indicator of the regular representation of $A$. Nevertheless they are equal since a finite-dimensional Hopf algebra is Frobenius.

\begin{proof}
  (a) Recall that there is an isomorphism $\Phi: \Hom_A(\reg_A, \reg_A^\vee) \to \reg_A^\vee$ given by $\Phi(f) = f(1)$. For $f \in \Hom_A(\reg_A, \reg_A^\vee)$ and $a \in A$, we have
  \begin{equation*}
    \langle \Phi \Trans_{A,A}(f), a \rangle
    = \langle (f^\vee j_V)(1), a \rangle
    = \langle j_V(1), f(a) \rangle
    = \langle f(a), g \rangle
  \end{equation*}
  Recalling that $f: \reg_A \to \reg_A^\vee$ is $A$-linear, we compute
  \begin{equation*}
    \langle f(a), g \rangle
    = \langle f(a \cdot 1), g \rangle
    = \langle a \cdot f(1), g \rangle
    = \langle \Phi(f), S(a) g \rangle,
    = \langle Q^\vee \Phi(f), a \rangle
  \end{equation*}
  Hence we have $\Trans_{A,A} = \Phi^{-1} \circ Q^\vee \circ \Phi^{}$ and therefore
  \begin{equation*}
    \nu(A) = \Trace(\Trans_{A,A}) = \Trace(Q^\vee) = \Trace(Q)
  \end{equation*}

  (b) Suppose that $A$ is Frobenius. By definition, there exists $\phi \in A^\vee$ such that the bilinear map $A \times A \to k$, $(a, b)\mapsto \phi(a b)$ ($a, b \in A$) is non-degenerate. By using $\phi$, we define a linear map $f: \reg_A \to \reg_A^\vee$ by $\langle f(a), b \rangle = \phi(S(a) b)$ ($a, b \in A$). It is obvious that $f$ is bijective. For $a, b, c \in A$, we compute
  \begin{equation*}
    \langle f(a b), c \rangle
    = \phi(S(a b) c)
    = \phi(S(b) S(a) c)
    = \langle f(b), S(a) c \rangle
    = \langle a \cdot f(b), c \rangle
  \end{equation*}
  Thus $f$ is $A$-linear and therefore $\reg_A \cong \reg_A^\vee$ as left $A$-modules.
\end{proof}

The following Theorem~\ref{thm:piv-Tr-Sv} is motivated by the {\em trace-like invariant} of Hopf algebras studied in \cite{MR2724230} and \cite{MR2804686}. Given $V \in \Mod(A)$, we denote by $\rho_V: A \to \End_k(V)$ the algebra map induced by the action of $A$. Let $I_V := \Ker(\rho_V)$ denote the annihilator of $V$. By the definition of the dual module $V^\vee$, we have $I_{V^\vee} = S(I_V)$. Hence, if $V$ is self-dual, then $\mathrm{Im}(\rho_V) \to \mathrm{Im}(\rho_V)$, $\rho_V(a) \mapsto \rho_V(S(a))$ ($a \in A$) is well-defined. We also note that if $V \in \fdMod(A)$ is absolutely simple, then:
\begin{equation}
  \label{eq:abs-simp-surj}
  \text{The algebra map $\rho_V: A \to \End_k(V)$ is surjective}
\end{equation}

\begin{theorem}
  \label{thm:piv-Tr-Sv}
  Let $V \in \fdMod(A)$ be an absolutely simple module. If $V$ is self-dual, then, by the above arguments, the map
  \begin{equation*}
    S_V: \End_k(V) \to \End_k(V), \quad \rho_V(a) \mapsto \rho_V(S(a)) \quad (a \in A)
  \end{equation*}
  is well-defined. By using $S_V$, we also define
  \begin{equation*}
    Q_V: \End_k(V) \to \End_k(V), \quad Q_V(f) = S_V(f) \circ \rho_V(g) \quad (f \in \End_k(V))
  \end{equation*}
  Then we have:
  \begin{equation*}
    {\rm (a)} \Trace(S_V) = \nu(V) \cdot \chi_V^{}(g), \qquad \qquad
    {\rm (b)} \Trace(Q_V) = \nu(V) \cdot \dim_k(V)
  \end{equation*}
\end{theorem}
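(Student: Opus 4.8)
The plan is to recognize $S_V$ as the adjoint operation on $\End_k(V)$ relative to an invariant bilinear form, transport the two maps to $V \otimes V$ via that form, and then read off the traces from Lemma~\ref{lem:trace-1}.

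Since $V$ is absolutely simple and self-dual, Theorem~\ref{thm:piv-FS} supplies $\nu(V) \in \{+1,-1\}$ together with a non-degenerate bilinear form $b$ on $V$ satisfying~\eqref{eq:S-adjoint-bilin} and $b(w, g v) = \nu(V)\, b(v, w)$ for all $v, w \in V$; put $\epsilon = \nu(V)$. For $\phi \in \End_k(V)$, non-degeneracy of $b$ yields a unique $\phi^{\ast} \in \End_k(V)$ with $b(\phi v, w) = b(v, \phi^{\ast} w)$ for all $v, w$. Condition~\eqref{eq:S-adjoint-bilin} reads $b(\rho_V(a) v, w) = b(v, \rho_V(S(a)) w)$, hence $\rho_V(a)^{\ast} = \rho_V(S(a))$; since $\rho_V$ is surjective by~\eqref{eq:abs-simp-surj}, this forces $S_V(\phi) = \phi^{\ast}$ for every $\phi \in \End_k(V)$, and therefore $Q_V(\phi) = \phi^{\ast} \circ \rho_V(g)$.

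Next I would use the linear isomorphism $\Psi\colon V \otimes V \to \End_k(V)$, $\Psi(v \otimes u) = [\, w \mapsto b(u, w)\, v\,]$, which exists because $b$ is non-degenerate; being an isomorphism, $\Psi$ preserves traces of endomorphisms. A direct computation, repeatedly invoking the restated symmetry relation $b(y, x) = \epsilon\, b(x, g y)$, identifies $\Psi^{-1} \circ S_V \circ \Psi$ and $\Psi^{-1} \circ Q_V \circ \Psi$ with the maps
\[
  v \otimes u \;\longmapsto\; \bigl(\epsilon\, \rho_V(g)(u)\bigr) \otimes \id_V(v)
  \qquad\text{and}\qquad
  v \otimes u \;\longmapsto\; \bigl(\epsilon\, \rho_V(g)(u)\bigr) \otimes \bigl(\rho_V(g)^{-1}(v)\bigr),
\]
respectively. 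Applying Lemma~\ref{lem:trace-1} with both tensor factors equal to $V$ then gives
\[
  \Trace(S_V) = \Trace\bigl(\id_V \circ \epsilon\, \rho_V(g)\bigr) = \epsilon\, \chi_V(g) = \nu(V)\,\chi_V^{}(g),
\]
\[
  \Trace(Q_V) = \Trace\bigl(\rho_V(g)^{-1} \circ \epsilon\, \rho_V(g)\bigr) = \epsilon \dim_k(V) = \nu(V)\,\dim_k(V),
\]
which are exactly (a) and (b).

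The only step needing care is the explicit description of $\Psi^{-1} S_V \Psi$ and $\Psi^{-1} Q_V \Psi$: one must track which argument of $b$ is in play and apply $b(w, g v) = \epsilon\, b(v, w)$ in the correct direction. A reader preferring matrices can instead fix a basis $(e_i)$ of $V$, set $B = \bigl(b(e_i, e_j)\bigr)$ and let $G$ be the matrix of $\rho_V(g)$; then $\phi^{\ast}$ has matrix $B^{-1} F^{T} B$, the symmetry relation becomes $B G = \epsilon\, B^{T}$, and the elementary identity $\Trace\bigl(F \mapsto B^{-1} F^{T} C\bigr) = \Trace\bigl(B^{-1} C^{T}\bigr)$ yields $\Trace(S_V) = \Trace(B^{-1} B^{T}) = \epsilon\,\Trace(G)$ and $\Trace(Q_V) = \Trace\bigl(B^{-1}(B G)^{T}\bigr) = \epsilon\,\dim_k(V)$. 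Either way there is no conceptual difficulty; I expect the bookkeeping around the two arguments of $b$ to be the only (minor) obstacle.
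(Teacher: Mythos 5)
Your argument is correct, and for part (a) it is essentially the paper's own proof in different clothing: the paper fixes an $A$-isomorphism $p: V \to V^\vee$, shows $S_V(f) = q^{-1} f^\vee q$ with $q = p^\vee \iota$, transports $S_V$ to $V \otimes V^\vee$ and applies Lemma~\ref{lem:trace-1}, finally using $\Trans_{V,V}(p) = \nu(V)\,p$ to identify $q^{-1}p = \nu(V)\rho_V(g)$ --- which is exactly the content of your relation $b(w, g v) = \nu(V)\, b(v,w)$ under the dictionary $b \leftrightarrow p$. Where you genuinely diverge is part (b). The paper does not compute $\Trace(Q_V)$ directly at all: it observes that $E = (\End_k(V), S_V, \rho_V(g))$ is itself a pivotal algebra, that $\rho_V: A \to E$ is a morphism of pivotal algebras so $\nu(V_0) = \nu(V)$ by Proposition~\ref{prop:FS-ind-inv}, that $\reg_E \cong V_0^{\oplus d}$ gives $\nu(E) = \nu(V)\,d$ by additivity, and that $\nu(E) = \Trace(Q_V)$ by Theorem~\ref{thm:piv-Tr-S}(a). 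Your route instead reuses the tensor-transport computation from (a), replacing $\phi^\ast$ by $\phi^\ast \circ \rho_V(g)$; this is more uniform and elementary (one mechanism proves both parts, and the matrix identity $BG = \epsilon B^{T}$ makes (b) a one-line consequence of (a)), whereas the paper's version of (b) is structural and explains \emph{why} the answer is $\nu(V)\dim_k(V)$ --- it is the FS indicator of the regular representation of the matrix algebra. One small remark: your appeal to Theorem~\ref{thm:piv-FS} for the existence of $b$ is legitimate since self-duality of the absolutely simple $V$ forces $\nu(V) = \pm 1$ there, and that theorem precedes this one in the paper, so there is no circularity.
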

\begin{proof}
  (a) This can be proved in the same way as \cite[Proposition 4.5]{KMN09}. Here we give another proof: Fix an isomorphism $p: V \to V^\vee$ of left $A$-modules and define $q: V \to V^\vee$ by $q = p^\vee \iota$, where $\iota = \iota_V$ is the canonical isomorphism \eqref{eq:k-Vec-piv-mor}. Our first claim is
  \begin{equation*}
    S_V(f) = q^{-1} f^\vee q \quad (f \in \End_k(V))
  \end{equation*}
  Indeed, \eqref{eq:abs-simp-surj}, there exists $a\ in A$ such that $f = \rho_V(a)$ for some $a \in A$. Hence, we compute
  \begin{align*}
    f^\vee q = (p \rho_V(a))^\vee \iota
    = (\rho_V(S a)^\vee p)^\vee \iota
    = p^\vee \rho_V(S a)^{\vee\vee} \iota
    = q \rho_V(S a) = q S_V(f)
  \end{align*}
  Next, we determine the map $V \otimes V^\vee \to V \otimes V^\vee$ induced by $S_V$ via
  \begin{equation*}
    V \otimes V^\vee \to \End_k(V), \quad v \otimes \lambda \mapsto (x \mapsto \lambda(x) v)
    \quad (\lambda \in V^\vee, v, x \in V)
  \end{equation*}
  If $f \in \End_k(V)$ is the element corresponding to $v \otimes \lambda \in V \otimes V^\vee$, then we have
  \begin{equation*}
    \langle f^\vee q(x), y \rangle
    = \langle p^\vee \iota(x), f(y) \rangle
    = \langle p(v), x \rangle \, \lambda(y)
    \quad (x, y \in V)
  \end{equation*}
  and therefore $S_V(f)(x) = \langle p(v), x \rangle q^{-1}(\lambda)$. This means that $S_V(f)$ corresponds to the element $q^{-1}(\lambda) \otimes p(v) \in V \otimes V^\vee$ via the above isomorphism.

  By the above observation, we have that the trace of $S_V$ is equal to that of
  \begin{equation*}
    V \otimes V^\vee \to V \otimes V^\vee,
    \quad v \otimes \lambda \mapsto q^{-1}(\lambda) \otimes p(v)
    \quad (v \in V, \lambda \in V^\vee)
  \end{equation*}
  Applying Lemma~\ref{lem:trace-1}, we have $\Trace(S_V) = \Trace(q^{-1} p)$. Now we recall the definition of the transposition map and compute $q = p^\vee \iota = p^\vee j_V \rho_V(g)^{-1} = \Trans_{V,V}(p) \rho_V(g)^{-1} = \nu(V) \cdot p \, \rho_V(g)^{-1}$. Hence, we conclude $\Trace(S_V) = \Trace(q^{-1}p) = \nu(V) \Trace(\rho_V(g)) = \nu(V) \chi_V(g)$.

  (b) The triple $E = (\End_k(V), S_V, \rho_V(g))$ is a pivotal algebra and $\rho_V: A \to E$ is a morphism of pivotal algebras. Let $V_0$ denote the vector space $V$ regarded as a left $E$-module. By Proposition~\ref{prop:FS-ind-inv}, the functor $\rho_V^\natural: \Mod(E) \to \Mod(A)$ preserves the FS indicator. Since $V = \rho_V^\natural(V_0)$, we have $\nu(V_0) = \nu(V)$.

  Now let $d = \dim_k(V)$. Then we have $\reg_E \cong V_0^{\oplus d}$ as left $E$-modules and therefore $\nu(E) = \nu(V_0) d = \nu(V) d$ by Proposition~\ref{prop:FS-ind-basic}. On the other hand, $\nu(E) = \Trace(Q_V)$ by Proposition~\ref{thm:piv-Tr-S}. Thus $\Trace(Q_V) = \nu(V) d$ follows.
\end{proof}

The following is a generalization of \cite[Theorem~8.8 (iii)]{MR2104908}.

\begin{corollary}
  \label{cor:Tr-S}
  Suppose that $k$ is algebraically closed and that $A = (A, S, g)$ is a finite-dimensional semisimple pivotal algebra. Let $\{ V_i \}_{i = 1, \dotsc, n}$ be a complete set of representatives of the isomorphism classes of simple left $A$-modules. Then
  \begin{equation*}
    \Trace(S) = \sum_{i = 1}^n \nu(V_i) \chi_{i}(g)
  \end{equation*}
  where $\chi_i = \chi_{V_i}^{}$ is the character of $V_i$.
\end{corollary}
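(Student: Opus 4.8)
The plan is to exploit the Wedderburn decomposition of $A$ and to recognize the restriction of $S$ to each block as the map $S_{V_i}$ of Theorem~\ref{thm:piv-Tr-Sv}, so that the result becomes a block-by-block application of Theorem~\ref{thm:piv-Tr-Sv}(a). Since $k$ is algebraically closed, every simple left $A$-module is absolutely simple, and semisimplicity gives $A = \bigoplus_{i=1}^n e_i A$ as a direct sum of two-sided ideals (blocks), where $e_i$ is the central primitive idempotent attached to $V_i$ and $\rho_{V_i}$ restricts to an algebra isomorphism $e_i A \xrightarrow{\ \sim\ } \End_k(V_i)$ while annihilating $e_j A$ for $j \ne i$ (this uses \eqref{eq:abs-simp-surj}).

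First I would check that $S$ permutes the blocks. As $S^2(a) = g a g^{-1}$ is bijective, $S$ is bijective, and being an anti-algebra map it carries two-sided ideals to two-sided ideals. Applying the identity $I_{V^\vee} = S(I_V)$ to $V = V_i$, the annihilator $(1-e_i)A = \bigoplus_{j \ne i} e_j A$ is sent by $S$ onto the annihilator of $V_i^\vee$, which is $(1-e_{i'})A$, where $i'$ is the unique index with $V_{i'} \cong V_i^\vee$. Since $S$ is a bijective linear map, $A = S(e_i A) \oplus S((1-e_i)A) = S(e_i A) \oplus (1-e_{i'})A$, and $S(e_i A)$ is a two-sided ideal; semisimplicity forces it to equal $e_{i'}A$. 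Thus, with respect to the decomposition $A = \bigoplus_i e_i A$, the matrix of $S$ has a block-permutation pattern (block $i$ maps to block $i'$), so $e_i A$ contributes to $\Trace(S)$ only when $i = i'$, that is, only when $V_i$ is self-dual.

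Next I would identify $S|_{e_i A}$ for self-dual $V_i$. Transporting along the isomorphism $\rho_{V_i}: e_i A \to \End_k(V_i)$, the map $S|_{e_i A}$ becomes precisely the well-defined map $S_{V_i}: \End_k(V_i) \to \End_k(V_i)$, $\rho_{V_i}(a) \mapsto \rho_{V_i}(S(a))$, of Theorem~\ref{thm:piv-Tr-Sv}. Hence $\Trace(S|_{e_i A}) = \Trace(S_{V_i}) = \nu(V_i)\,\chi_i(g)$ by Theorem~\ref{thm:piv-Tr-Sv}(a). Summing the block contributions gives $\Trace(S) = \sum_{i:\ V_i\text{ self-dual}} \nu(V_i)\,\chi_i(g)$, and since $\nu(V_i) = 0$ whenever $V_i$ is not self-dual (Proposition~\ref{prop:FS-ind-abs-simple}(b), simple $\Rightarrow$ absolutely simple over $k$), the restriction on the summation index is harmless and the claimed formula follows. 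The only step demanding genuine care is the block-permutation claim, since $S$ is merely an anti-homomorphism; once that is settled, everything reduces to Theorem~\ref{thm:piv-Tr-Sv}(a). Semisimplicity is used exactly to produce the Wedderburn blocks $e_i A \cong \End_k(V_i)$, and algebraic closedness to conflate simple with absolutely simple modules.
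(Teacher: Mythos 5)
Your proposal is correct and follows essentially the same route as the paper: decompose $A$ into its Wedderburn blocks, observe that $S$ permutes the blocks according to $V_i \mapsto V_i^\vee$ so only self-dual blocks contribute to the trace, identify the restriction of $S$ to such a block with $S_{V_i}$, and apply Theorem~\ref{thm:piv-Tr-Sv}(a) together with the vanishing of $\nu(V_i)$ for non-self-dual $V_i$. The only difference is that you justify the block-permutation claim in more detail (via $I_{V^\vee}=S(I_V)$ and the fact that an anti-algebra bijection carries two-sided ideals to two-sided ideals), which the paper states without proof.
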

\begin{proof}
  Put $I = \{ 1, \dotsc, n \}$. For $i \in I$, let $\rho_i: A \to \End_k(V_i)$ denote the action of $A$ on $V_i$. By the Artin--Wedderburn theorem, we have an isomorphism
  \begin{equation*}
    A \to \End_k(V_1) \oplus \dotsb \oplus \End_k(V_n),
    \quad a \mapsto (\rho_1(a), \dotsc, \rho_n(a))
  \end{equation*}
  of algebras. $S: A \to A$ induces an anti-algebra map
  \begin{equation*}
    \tilde{S}: \End_k(V_1) \oplus \dotsb \oplus \End_k(V_n) \to \End_k(V_1) \oplus \dotsb \oplus \End_k(V_n)
  \end{equation*}
  via the isomorphism. For each $i \in I$, we have $\tilde{S}(\End_k(V_i)) \subset \End_k(V_{i^*})$, where $i^* \in I$ is the element such that $V_i^\vee \cong V_{i^*}^{}$. Hence we obtain
  \begin{equation*}
    \Trace(S) = \Trace(\tilde{S})
    = \sum_{i \in I, i^* = i} \Trace(\widetilde{S}|_{\End_k(V_i)})
    = \sum_{i \in I, i^* = i} \nu(V_i) \chi_i(g)
  \end{equation*}
  by Theorem~\ref{thm:piv-Tr-Sv}. The sum in the right-hand side is equal to $\sum_{i = 1}^n \nu(V_i) \chi_i(g)$ since $\nu(V_i) = 0$ unless $i = i^*$. The proof is done.
\end{proof}

\subsection{Separable Pivotal Algebras}

Recall that an algebra $A$ is said to be {\em separable} if it has a {\em separability idempotent}, {\em i.e.}, an element $E \in A \otimes A$ such that $E^{1} E^{2} = 1$ and $a E^{1} \otimes E^{2} = E^{1} \otimes E^{2} a$ for all $a \in A$. If such an element exists, then the forgetful functor $\Mod(A) \to \Vect_\fdim$ is separable with section $\Pi_{V,W}: \Hom_k(V, W) \to \Hom_A(V, W)$ ($V, W \in \Mod(A)$) given by
\begin{equation*}
  \Pi_{V,W}(f)(v) = E^1 f(E^2 v)
  \quad (f \in \Hom_k(V, W))
\end{equation*}
Hence, if a pivotal algebra $A = (A, S, g)$ is separable (as an algebra), then we can apply the arguments of \S\ref{subsec:sep-functor}. This is a rationale for the following theorem:

\begin{theorem}
  \label{thm:piv-FS-ind-ch}
  Let $A = (A, S, g)$ be a separable pivotal algebra with separability idempotent $E \in A \otimes A$. Then, for all $V \in \fdMod(A)$, we have
  \begin{equation*}
    \nu(V) = \chi_V(S(E^1) g E^2)
  \end{equation*}
\end{theorem}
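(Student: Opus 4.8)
The plan is to apply the separable-functor machinery of \S\ref{subsec:sep-functor} to the forgetful functor $U\colon \fdMod(A)\to \Vect_\fdim$, which is separable with the explicit section $\Pi$ attached to the separability idempotent $E$. By~\eqref{eq:FS-ind-sep}, $\nu(V)=\Trace(\widetilde{\Trans}_{V,V})$, where $\widetilde{\Trans}_{V,V}$ is the map on $\Hom_k(V,V^\vee)$ obtained by conjugating $\Trans_{V,V}$ through $\Pi$ and $U$. So the first step is to compute $\widetilde{\Trans}_{V,V}$ explicitly. It is more convenient to transport everything along the canonical isomorphism $B_V\colon\Hom_k(V,V^\vee)\to\Bil(V)$ of~\eqref{eq:canonical-map} and work with a map $\widetilde{\Sigma}_V\colon \Bil(V)\to\Bil(V)$ on the space of all bilinear forms, exactly as in the semisimple-Hopf-algebra discussion in \S\ref{subsec:sep-functor}. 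Unwinding the definitions of $\Pi_{V,V^\vee}$, of the dual module structure~\eqref{eq:dual-module}, of $j_V$~\eqref{eq:piv-mor-alg}, and of $\Trans_{V,V}$~\eqref{eq:transposition-bilin-1}, I expect to arrive at a formula of the shape
\begin{equation*}
  \widetilde{\Sigma}_V(b)(v,w) = b\bigl(S(E^1)\,g\,E^2 v,\ w\bigr)\quad\text{or}\quad b\bigl(v,\ S(E^1)\,g\,E^2 w\bigr),
\end{equation*}
possibly with the two tensor legs of $E$ entering in the order dictated by the ``$b(w,gv)$'' twist in~\eqref{eq:transposition-bilin-1}; the precise placement of $S$, $g$ and the swap is the one bookkeeping point that has to be done carefully.

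Once $\widetilde{\Sigma}_V$ is identified as ``precompose the first (or second) argument with the operator $\rho_V(S(E^1)gE^2)$ on $V$'', the second step is a trace computation. Here I would invoke the dual form of Lemma~\ref{lem:trace-1}: the map $T^\vee\colon B(V,V)\to B(V,V)$ given by $T^\vee(b)(v,w)=b(g(w),f(v))$ has trace $\Trace(fg)$. In our situation one of $f,g$ is the identity and the other is $\rho_V(S(E^1)gE^2)$ (summed over the implicit Sweedler-type index of $E$), so the trace is $\Trace\bigl(\rho_V(S(E^1)gE^2)\bigr)=\chi_V(S(E^1)gE^2)$. Combining with~\eqref{eq:FS-ind-sep}, which guarantees $\Trace(\widetilde{\Sigma}_V)=\nu(V)$ independently of the chosen section, yields $\nu(V)=\chi_V(S(E^1)gE^2)$.

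The two things that need genuine (though routine) verification are: (i) that $\Pi$ really is a $k$-linear section of $U$ in the sense of \S\ref{subsec:sep-functor} — this is the standard fact about separability idempotents, using $E^1E^2=1$ for the section property and the centrality relation $aE^1\otimes E^2=E^1\otimes E^2a$ for $A$-linearity of $\Pi_{V,W}(f)$; and (ii) the commutativity of the square defining $\widetilde{\Sigma}_V$, i.e.\ that $B_V$ intertwines $\widetilde{\Trans}_{V,V}$ with $\widetilde{\Sigma}_V$ and that the inclusion $\Hom_A(V,V^\vee)\hookrightarrow\Hom_k(V,V^\vee)$ sits in the diagram the right way — this is a direct diagram chase through the definitions above. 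I expect the main (and only real) obstacle to be purely notational: keeping the order of the two legs $E^1,E^2$ of $E$ and the anti-multiplicativity of $S$ straight through the composition of $\Pi_{V,V^\vee}$, the transposition $\Trans_{V,V}$, and the identification $B_V$, so that the element $S(E^1)gE^2$ comes out in precisely that order rather than, say, $S(E^2)gE^1$. Since $E$ is a separability idempotent one could in principle symmetrize, but the cleanest route is simply to track the indices honestly; no new idea is required beyond Lemma~\ref{lem:trace-1} and~\eqref{eq:FS-ind-sep}.
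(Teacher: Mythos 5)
Your plan is the paper's proof: pass through the separable section $\Pi$ attached to $E$, transport to bilinear forms via $B_V$, identify $\widetilde{\Sigma}_V$, and take the trace using (the dual form of) Lemma~\ref{lem:trace-1} together with \eqref{eq:FS-ind-sep}. The one place your anticipated computation goes off track is the predicted shape of $\widetilde{\Sigma}_V$. It is \emph{not} of the form $b(v,w)\mapsto b(S(E^1)gE^2\,v,\,w)$ or $b(v,\,S(E^1)gE^2\,w)$, and in the trace step it is \emph{not} the case that one of the two operators is the identity: unwinding $\Pi_{V,V^\vee}(f)(v)=E^1 f(E^2 v)$ against the dual module structure \eqref{eq:dual-module} gives $\Pi'_V(b)(v,w)=b(E^2 v,\,S(E^1)w)$, and composing with $\Sigma_V(b)(v,w)=b(w,gv)$ yields
\begin{equation*}
  \widetilde{\Sigma}_V(b)(v,w)=b\bigl(E^2 w,\ S(E^1)g\,v\bigr),
\end{equation*}
so the two legs of $E$ land on \emph{different} arguments (and the arguments are swapped). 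Since $b$ is an arbitrary bilinear form, you cannot slide $E^2$ across to the other slot, so the element $S(E^1)gE^2$ does not appear as a single operator on $V$ at this stage; it only materializes when the dual of Lemma~\ref{lem:trace-1} with $f=\rho_V(S(E^1)g)$ and $g=\rho_V(E^2)$ (summed over the tensor index of $E$) converts the trace into $\Trace(fg)=\chi_V(S(E^1)gE^2)$. Note also that your two displayed candidates lack the swap of arguments entirely; taken literally, such a map has trace $\dim_k(V)\cdot\chi_V(S(E^1)gE^2)$, not $\chi_V(S(E^1)gE^2)$. With that correction the argument is exactly the one in the paper.
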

\begin{proof}
  Define $\widetilde{\Sigma}_V: \Bil(V) \to \Bil(V)$ so that the following diagrams commute:
  \begin{equation*}
    \begin{CD}
      \Bil(V) @>{B_V^{-1}}>> \Hom_k(V, V^\vee) @>{\Pi_{V,V^\vee}}>> \Hom_A(V, V^\vee) @>{B_V^{}}>> \Bil_A(V)\phantom{.} \\
      @V{\widetilde{\Sigma}_V}VV @V{\widetilde{\Trans}_{V,V}}VV @VV{\Trans_{V,V}}V @VV{\Sigma_V}V \\
      \Bil(V) @<<{B_V^{}}< \Hom_k(V, V^\vee) @<<{\text{inclusion}}< \Hom_A(V, V^\vee) @<<{B_V^{-1}}< \Bil_A(V)
    \end{CD}
  \end{equation*}
  By the arguments in \S\ref{subsec:sep-functor}, $\nu(V)$ is equal to $\Trace(\widetilde{\Trans}_{V,V})$. However, to make the computation easier, we prefer to compute $\Trace(\widetilde{\Sigma}_V)$, which is also equal to $\nu(V)$.

  Let $\Pi'_V: \Bil(V) \to \Bil_A(V)$ be the composition of the arrows of the first row of the above diagram. If $b = B_V(f)$ for some $f \in \Hom_k(V, V^\vee)$, we have
  \begin{gather*}
    \Pi'_V(b)(v, w)
    = \Big \langle \Pi_{V,V^\vee}(f)(v), w \Big \rangle
    = \Big \langle f (E^2v), S(E^1)w \Big \rangle
    = b \Big( E^2v, S(E^1)w \Big)
  \end{gather*}
  Hence, by~\eqref{eq:transposition-bilin-1}, we have
  \begin{equation*}
    \widetilde{\Sigma}_V(b)(v, w)
    = \Sigma_V \Big( \Pi'_V(b) \Big)(v, w)
    = b \Big( E^2 w, S(E^1) g v \Big)
  \end{equation*}
  Applying Lemma~\ref{lem:trace-1}, we obtain $\nu(V) = \chi_V(S(E^1) g E^2)$.
\end{proof}

We discuss the relation between Theorem~\ref{thm:piv-FS-ind-ch} and the results of \cite{MR1808131,MR2674691,2011arXiv1110.5672G}. Let $A = (A, S, g)$ be a pivotal algebra such that the algebra $A$ is symmetric with trace form $\phi: A \to k$. By definition, the map
\begin{equation}
  \label{eq:sym-alg-trace}
  A \times A \to k,
  \quad (a, b) \mapsto \phi(a b)
  \quad (a, b \in A)
\end{equation}
is a non-degenerate bilinear symmetric form. Fix a basis $\{ b_i \}_{i \in I}$ of $A$ and let $\{ b_i^\vee \}_{i \in I}$ be the dual basis of $\{ b_i \}$ with respect to~\eqref{eq:sym-alg-trace}. As remarked in \cite{MR2674691}, we have
\begin{equation*}
  \sum_{i \in I} a b_i \otimes b_i^\vee = \sum_{i \in I} b_i \otimes b_i^\vee a
\end{equation*}
for all $a \in A$. Hence $v_A = \sum_{i \in I} b_i b_i^\vee \in A$ is a central element, called the {\em volume} of $(A, \phi)$.

Now we suppose that the base field $k$ is of characteristic zero and $A$ is split semisimple over $k$. Then, as Doi showed in \cite{MR2674691}, the volume $v_A$ is invertible and hence $E = \sum_{i \in I} b_i \otimes b_i^\vee v_A^{-1} \in A \otimes A$ is a separability idempotent of $A$. By Theorem~\ref{thm:piv-FS-ind-ch}, the FS indicator of $V \in \Rep(A)$ is given by
\begin{equation*}
  \nu(V) = \sum_{i \in I} \chi_V(S(b_i) g b_i^\vee v_A^{-1})
\end{equation*}
If $V$ is simple, then, by Schur's lemma, $v_A$ acts on $V$ as $\chi_V(v_A) \dim_k(V)^{-1} \cdot \id_V$. Hence, we have
\begin{equation}
  \label{eq:FS-symm-alg-1}
  \nu(V) = \frac{\dim_k(V)}{\chi_V(v_A)} \sum_{i \in I} \chi_V(S(b_i) g b_i^\vee)
\end{equation}
The {\em Schur element} of a simple module $V \in \Rep(A)$ is given by $c_V = \chi_V(v_A) \dim_k(V)^{-2}$ (see Remark 1.6 of \cite{MR2674691}). By using the Schur element, $\nu(V)$ is expressed as
\begin{equation}
  \label{eq:FS-symm-alg-2}
  \nu(V) = \frac{1}{c_V \dim_k(V)} \sum_{i \in I} \chi_V(S(b_i) g b_i^\vee)
\end{equation}
Letting $g = 1$, we recover the results of \cite{MR1808131,MR2674691}. Geck \cite{2011arXiv1110.5672G} assumed that $g = 1$ and $A$ has a basis $\{ b_i \}_{i \in I}$ such that $b_i^\vee = S(b_i)$. If this is the case, then we have
\begin{equation*}
  \nu(V) = \frac{1}{c_V \dim_k(V)} \sum_{i \in I} \chi_V(b_i^2)
\end{equation*}

\begin{example}[Group-like algebras]
  \label{ex:FS-ind-ch-GLalg}
  As a generalization of the group algebra of a finite group and the adjacency algebra of an association scheme, Doi \cite{MR2051736,MR2674691} introduced a {\em group-like algebra}; it is defined to be a quadruple $(A, \varepsilon, \mathcal{B}, *)$ consisting of a finite-dimensional algebra $A$, an algebra map $\varepsilon: A \to k$, a basis $\mathcal{B} = \{ b_i \}_{i \in I}$ of $A$ indexed by a set $I$, and an involutive map $*: I \to I$, $i \mapsto i^*$ satisfying the following conditions: \\
  \indent (G0) There is a special element $0 \in I$ such that $b_0 = 1$ is the unit of $A$. \\
  \indent (G1) $\varepsilon(b_i) = \varepsilon(b_{i^*}) \ne 0$ for all $i \in I$. \\
  \indent (G2) $p_{i j}^k = p_{j^* i^*}^{k^*}$ for all $i, j, k \in I$, where $p_{i j}^k$ is given by $b_i \cdot b_j = \sum_{k \in I} p_{i j}^k b_k$ ($i, j, k \in I$). \\
  \indent (G3) $p_{i j}^0 = \delta_{i j^*} \varepsilon(b_i)$ for all $i \in I$. \\
  Now let $A = (A, \varepsilon, \mathcal{B}, *)$ be a group-like algebra. Define a linear map $S: A \to A$ by $S(b_i) = b_{i^*}$ for $i \in I$. One can check that the triple $A = (A, S, 1)$ is a pivotal algebra. By an {\em involution} of $A$, we mean an involutive map $\tau: I \to I$ satisfying
  \begin{equation*}
    \tau(i^*) = \tau(i)^* \text{\quad and \quad} p_{\tau(i),\tau(j)}^{\tau(k)} = p_{i j}^k
  \end{equation*}
  for all $i, j, k \in I$. Such a map gives rise to an involution of the pivotal algebra $(A, S, 1)$.

  In what follows, we compute the $\tau$-twisted FS indicator $\nu^\tau(V)$ of a simple module $V \in \Rep(A)$ under the assumption that the base field is $\mathbb{C}$ and $\varepsilon(b_i) > 0$ for all $i \in I$. Then, by the results of Doi \cite{MR2674691}, $A$ is semisimple. Note that $A$ is a symmetric algebra with trace form given by $\phi(b_i) = \delta_{i 0}$ and the dual basis of $\{ b_i \}$ with respect to \eqref{eq:sym-alg-trace} is given by $b_i^\vee = \varepsilon(b_i)^{-1} b_{i^*}$. Applying \eqref{eq:FS-symm-alg-1} and \eqref{eq:FS-symm-alg-2} to $A^\tau = (A, S \circ \tau, 1)$, we obtain the following formula:
  \begin{equation*}
    \nu^\tau(V)
    = \frac{\dim_\mathbb{C}(V)}{\chi_V(v_A)} \sum_{i \in I} \frac{1}{\varepsilon(b_i)} \chi_V^{}(b_{\tau(i)} b_i)
    = \frac{1}{c_V \dim_\mathbb{C}(V)} \sum_{i \in I} \frac{1}{\varepsilon(b_i)} \chi_V^{}(b_{\tau(i)} b_i)
  \end{equation*}
In particular, applying this formula to the adjacency algebra of an association scheme, we recover the formula of Hanaki and Terada \cite{TeradaJunya:2006-03}.

To obtain the twisted Frobenius--Schur theorem for this class of algebras, combine the above formula with Theorem~\ref{thm:piv-FS}; we then have $\nu^\tau(V) \in \{ 0, \pm 1 \}$. Moreover, $\nu^\tau(V) \ne 0$ if and only if there exists a non-degenerate bilinear form $\beta$ on $V$ such that $\beta(b_{\tau(i)} v, w) = \beta(v, b_{i^*} w)$ for all $i \in I$ and $v, w \in V$. Such a bilinear form $\beta$ is symmetric if $\nu^\tau(V) = +1$ and skew-symmetric if $\nu^\tau(V) = -1$.
\end{example}

\begin{example}[Weak Hopf $C^*$-algebras]
  \label{ex:FS-ind-ch-WHA}
We assume that the base field is $\mathbb{C}$. A {\em weak Hopf algebra} is an algebra $H$ which is a coalgebra at the same time such that there exists a special map $S: H \to H$ called the antipode; see \cite{MR1726707} and \cite{MR1793595} for the precise definition. We note that the antipode of a weak Hopf algebra is known to be an anti-algebra map.

  Let $H$ be a finite-dimensional weak Hopf $C^*$-algebra; see \cite[\S4]{MR1726707} for the precise definition. There exists an element $g \in H$, called the {\em canonical grouplike element} \cite[\S4]{MR1726707}, satisfying $S(g) = g^{-1}$, $S^2(x) = g x g^{-1}$ for all $x \in H$, and some other good properties. In particular, the triple $(H, S, g)$ is a pivotal algebra and therefore the FS indicator $\nu(V)$ is defined for each $V \in \fdMod(H)$.

  We can express $\nu(V)$ by using the {\em Haar integral} \cite[\S3]{MR1726707}; if $\Lambda \in H$ is the Haar integral in $H$, then $E = S(\Lambda_{(1)}) \otimes \Lambda_{(2)}$ is a separability idempotent of $H$ ({\em cf}. the proof of Theorem 3.13 of \cite{MR1726707}). Applying Theorem~\ref{thm:piv-FS-ind-ch}, we have
  \begin{equation}
    \label{eq:FS-formula-WHA}
    \nu(V) = \chi_V(S^2(\Lambda_{(1)}) g \Lambda_{(2)}) = \chi_V(g \Lambda_{(1)} \Lambda_{(2)})
  \end{equation}
  Combining the above formula with Theorem~\ref{thm:piv-FS}, we obtain the Frobenius--Schur theorem for semisimple weak Hopf algebras. We finally give some remarks concerning this example:

  (1) Takahiro Hayashi (in private communication with the author) has proved \eqref{eq:FS-formula-WHA} and analogous formulas of the higher FS indicators for weak Hopf algebras in the case where $S^2 = \id_H$.

  (2) The formula of Linchenko and Montgomery \cite{MR1808131} is the case where $H$ is an ordinary Hopf algebra. If this is the case, then the $C^*$-condition for $H$ is not needed since we have $S^2 = \id_H$ by the theorem of Larson and Radford \cite{MR957441}. It is not known whether every semisimple weak Hopf algebra $H$ has a grouplike element $g$ such that $S^2(h) = g h g^{-1}$ for all $h \in H$. An affirmative answer to this question proves Conjecture 2.8 of \cite{MR2183279}, which states that every fusion category admits a pivotal structure.

  (3) We do not know whether our Formula \eqref{eq:FS-formula-WHA} is equivalent to \cite[(4.3)]{MR1657800} or \cite[(3.70)]{MR1793595}. In \cite{MR2104908,MR1657800,MR1793595}, formulas are proved by finding a central element $e$ such that $\nu(V) = \chi_V(e)$ for all $V$. On the other hand, the element $S(E^1) g E^2$ of our Theorem~\ref{thm:piv-FS-ind-ch} is not central in general. In \S\ref{subsec:quasi-hopf-algebras}, we give a formula of the FS indicator for quasi-Hopf algebras and its twisted version. For the above reason, it is not straightforward to derive the formula of Mason and Ng \cite{MR2104908} from our formula.

  (4) By an involution of $H$, we mean an involutive algebra map $\tau: H \to H$ which is also a coalgebra map. Such a map $\tau$ is in fact an involution of the pivotal algebra $(H, S, g)$ and the $\tau$-twisted FS indicator of $V \in \Rep(H)$ is given by $\nu^\tau(V) = \chi_V(g \tau(\Lambda_{(1)}) \Lambda_{(2)})$. We omit the details since these results can be proved in a similar way as the case of quasi-Hopf algebras; see \S\ref{subsec:quasi-hopf-algebras}.
\end{example}

\begin{example}[Twisting by $L \otimes (-)$]
  By using separable pivotal algebras, Theorem~\ref{thm:FS-ind-Hopf-tw-2} can be proved as follows: Let $H$, $\alpha$ and $L$ be as in that theorem and define $T_\alpha: H \to H$ by $T_\alpha(h) = \alpha(h_{(1)}) S(h_{(2)})$ ($h \in H$). It is easy to see that the triple $H' = (H, T_\alpha, 1)$ is a pivotal algebra and the identity functor is a strict duality preserving functor between $\fdMod(H)^{\mathbf{t}(L)}$ and $\fdMod(H')$. Hence, by Proposition~\ref{prop:FS-ind-inv} and Theorem~\ref{thm:piv-FS-ind-ch}, we have
  \begin{equation*}
    \nu(V; L) = \chi_V(T_\alpha S(\Lambda_{(1)}) \Lambda_{(2)})
    = \alpha(S(\Lambda_{(1)})) \chi_V(\Lambda_{(2)} \Lambda_{(3)})
  \end{equation*}
  for all $V \in \fdMod(H)$. The meaning of $\nu(V; L)$ is obtained by applying Theorem~\ref{thm:piv-FS} to $H'$.
\end{example}

\subsection{Quasi-Hopf Algebras}
\label{subsec:quasi-hopf-algebras}

We derive a formula of Mason and Ng \cite{MR2104908} and its twisted version from our results.

Recall that a {\em quasi-Hopf algebra} \cite{MR1047964} is a data $H = (H, \Delta, \varepsilon, \Phi, S, \alpha, \beta)$ consisting of an algebra $H$, algebra maps $\Delta: H \to H \otimes H$ and $\varepsilon: H \to k$, an anti-algebra automorphism $S: H \to H$, elements $\alpha, \beta \in H$ and an invertible element $\Phi \in H^{\otimes 3}$ with inverse $\overline{\Phi}$ satisfying numerous conditions. Let $H_i = (H_i, \Delta_i, \varepsilon_i, \Phi_i, S_i, \alpha_i, \beta_i)$ be quasi-Hopf algebras $(i = 1, 2$). A {\em morphism of quasi-Hopf algebras} from $H_1$ to $H_2$ is an algebra map $f: H_1 \to H_2$ satisfying
\begin{gather*}
  \Delta_2 f = (f \otimes f)\Delta_1, \quad
  \varepsilon_2 f = \varepsilon_1, \quad
  \Phi_2 = (f \otimes f \otimes f)(\Phi_1) \\
  S_2 f = f S_1, \quad 
  \alpha_2 = f(\alpha_1), \quad 
  \beta_2 = f(\beta_2)
\end{gather*}
Hence, by an {\em involution} of a quasi-Hopf algebra $H$, we shall mean a morphism $\tau: H \to H$ of quasi-Hopf algebras such that $\tau^2 = \id_H$.

If $H$ is a quasi-Hopf algebra, then $\fdMod(H)$ is a rigid monoidal category. Given $V \in \fdMod(H)$, we denote by $e_V: V^\vee \otimes V \to k$ and $c_V: k \to V \otimes V^\vee$ the evaluation and the coevaluation, respectively. Here we need to recall that the dual module of $V$ is defined by the same way as~\eqref{eq:dual-module} and the maps $e_V$ and $c_V$ are given by
\begin{equation*}
  e_V(\lambda \otimes v) = \langle \lambda, \alpha v \rangle
  \quad (\lambda \in V^\vee, v \in V); \quad
  c_V(1) = \sum_{i = 1}^n \beta v_i \otimes v^i
\end{equation*}
where $\{ v_i \}_{i = 1, \dotsc, n}$ is a basis of $V$ and $\{ v^i \}$ is the dual basis.

We need additional assumptions on $H$ so that $H$ is a pivotal algebra. In what follows, we suppose that $k$ is an algebraically closed field of characteristic zero and $H$ is a finite-dimensional semisimple quasi-Hopf algebra. Then $\fdMod(H)$ is a fusion category \cite{MR2183279} such that each its object has an integral Frobenius--Perron dimension. Therefore, by the results of \cite{MR2183279}, $\fdMod(H)$ has a {\em canonical pivotal structure}, {\em i.e.}, an isomorphism $j: \id_{\fdMod(H)} \to (-)^{\vee\vee}$ of $k$-linear monoidal functors such that, for all $V \in \fdMod(H)$, the composition
\begin{equation*}
  \begin{CD}
    k @>{c_V}>> V \otimes V^\vee @>{j_V \otimes \id_{V^\vee}}>> V^{\vee\vee} \otimes V^\vee @>{e_{V^\vee}}>> k
  \end{CD}
\end{equation*}
maps $1 \in k$ to $\dim_k(V) \in k$. Now let $g \in H$ be the image of $1 \in H$ under
\begin{equation*}
  \begin{CD}
    H @>{j_H}>> H^{\vee\vee} @>{\iota_H^{-1}}>> H
  \end{CD}
\end{equation*}
where $\iota_H$ is the Isomorphism~\eqref{eq:k-Vec-piv-mor}. We call $g$ the {\em canonical pivotal element} of $H$. By definition, $g$ is invertible and satisfies $S(g) g = 1$ and $S^2(h) = g a g^{-1}$ for all $a \in H$; see \cite{MR2104908} and \cite{MR2095575} for details. Hence $(H, S, g)$ is a pivotal algebra. Now we remark:

\begin{lemma}
  \label{lem:can-piv-1}
  Let $f: H_1 \to H_2$ be an isomorphism between finite-dimensional semisimple quasi-Hopf algebras. Then we have $f(g_1) = g_2$, where $g_i \in H_i$ is the canonical pivotal element of $H_i$.
\end{lemma}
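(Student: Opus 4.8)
The plan is to show that the isomorphism $f$ transports the canonical pivotal structure of $\fdMod(H_2)$ to that of $\fdMod(H_1)$, and then to read off the statement by evaluating at the regular representation. Throughout, write $j^{(i)}$ for the canonical pivotal structure of $\fdMod(H_i)$, so that by the definition of the canonical pivotal element one has $\iota_{H_i}(g_i) = j^{(i)}_{\reg_{H_i}}(1)$, where $\iota_{H_i}$ is the canonical map~\eqref{eq:k-Vec-piv-mor} and $\reg_{H_i}$ is identified with the object $H_i \in \fdMod(H_i)$.

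First I would record the relevant properties of the pull-back functor $f^\natural\colon \fdMod(H_2) \to \fdMod(H_1)$ of~\eqref{eq:pull-back-func-fd}. Since $f$ is an isomorphism of quasi-Hopf algebras, $(f^{-1})^{\natural}$ is a strict two-sided inverse of $f^\natural$; since $\Delta_2 f = (f \otimes f)\Delta_1$, $\varepsilon_2 f = \varepsilon_1$ and $(f \otimes f \otimes f)(\Phi_1) = \Phi_2$, the functor $f^\natural$ with identity structure morphisms is strict monoidal; since $S_2 f = f S_1$, one has $f^\natural(V^\vee) = (f^\natural V)^\vee$ as $H_1$-modules, exactly as in~\eqref{eq:dual-module}; and since $f(\alpha_1) = \alpha_2$ and $f(\beta_1) = \beta_2$, the functor $f^\natural$ carries the evaluation and coevaluation morphisms $e_V, c_V$ of $\fdMod(H_2)$ to those of $\fdMod(H_1)$. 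In particular $f^\natural(V^{\vee\vee}) = (f^\natural V)^{\vee\vee}$, and $f^\natural$ sends the scalar morphism $\dim_k(V)\cdot\id_k$ to $\dim_k(f^\natural V)\cdot\id_k$, since it leaves underlying vector spaces unchanged.

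Next I would carry out the transport argument. For $W \in \fdMod(H_1)$ put $j'_W := j^{(2)}_{(f^{-1})^{\natural}W}$, a morphism $W \to W^{\vee\vee}$ by the previous step. Because $f^\natural$ is strict monoidal and compatible with $e$, $c$ and the dimension scalar, the family $j'$ is an isomorphism $\id_{\fdMod(H_1)} \to (-)^{\vee\vee}$ of $k$-linear monoidal functors satisfying $e_{W^\vee} \circ (j'_W \otimes \id_{W^\vee}) \circ c_W = \dim_k(W)\cdot\id_k$ for all $W$. Invoking the uniqueness of such a pivotal structure on the fusion category $\fdMod(H_1)$ --- two candidates differ by a monoidal natural automorphism of $\id$ which on each simple $V_i$ is a scalar $\lambda_i$, and comparing the resulting dimension scalars forces $\lambda_i = 1$ since $\dim_k(V_i) \ne 0$ in characteristic zero, cf.~\cite{MR2183279} --- I conclude $j' = j^{(1)}$; equivalently $j^{(1)}_{f^\natural V} = f^\natural\!\big(j^{(2)}_V\big)$ for all $V \in \fdMod(H_2)$.

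Finally I would evaluate at the regular representation. The algebra isomorphism $f$ is itself an isomorphism $\reg_{H_1} \xrightarrow{\ \sim\ } f^\natural(\reg_{H_2})$ of left $H_1$-modules. Applying the naturality of $j^{(1)}$ to this isomorphism, substituting $j^{(1)}_{f^\natural \reg_{H_2}} = f^\natural\!\big(j^{(2)}_{\reg_{H_2}}\big)$ from the previous step, and evaluating at $1 \in H_1$ (with $f(1) = 1$) yields $j^{(2)}_{\reg_{H_2}}(1) = f^{\vee\vee}\!\big(j^{(1)}_{\reg_{H_1}}(1)\big)$ in $H_2^{\vee\vee}$, where $f^{\vee\vee}$ denotes the double $k$-linear dual of $f$. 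Using $j^{(i)}_{\reg_{H_i}}(1) = \iota_{H_i}(g_i)$ together with the naturality $f^{\vee\vee}\circ\iota_{H_1} = \iota_{H_2}\circ f$ of~\eqref{eq:k-Vec-piv-mor}, this becomes $\iota_{H_2}(g_2) = \iota_{H_2}(f(g_1))$, whence $f(g_1) = g_2$ because $\iota_{H_2}$ is injective. I expect the only substantive ingredient to be the uniqueness of the canonical pivotal structure used in the transport step --- which is where the semisimplicity and the characteristic-zero/fusion-category hypotheses enter, through~\cite{MR2183279} --- while the remaining steps are bookkeeping with the pull-back functor and with the naturality of the canonical double-duality maps.
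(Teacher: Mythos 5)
Your proof is correct and follows essentially the same route as the paper's: both reduce the claim to the fact that the strict monoidal equivalence $f^\natural$ preserves the canonical pivotal structure, and then read off $f(g_1)=g_2$ at the regular representation. The only difference is that the paper simply cites \cite[Corollary~6.2]{MR2381536} for that preservation, whereas you derive it directly from the uniqueness of the dimension-normalized pivotal structure on a fusion category, which makes the argument self-contained but does not change its substance.
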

\begin{proof}
  $f$ induces a functor $f^\natural: \fdMod(H_1) \to \fdMod(H_2)$. By the definition of morphisms of quasi-Hopf algebras, the functor $f^\natural$ is a $k$-linear strict monoidal equivalence. The result follows from the fact that such a functor preserves the canonical pivotal structure \cite[Corollary~6.2]{MR2381536}.
\end{proof}

From this lemma, we see that an involution $\tau$ of the quasi-Hopf algebra $H$ is an involution of the pivotal algebra $(H, S, g)$. As we have observed in \S\ref{sec:piv-alg-cat-dual}, $\tau$ gives rise to an involution of $\fdMod(H)$ and hence the $\tau$-twisted FS indicator $\nu^\tau(V)$ is defined for $V \in \fdMod(H)$. By Theorem~\ref{thm:piv-FS} applied to $A = (H, S \circ \tau, g)$, we have the following property of $\nu^\tau$:

\begin{theorem}
  Let $V \in \fdMod(H)$ be a simple module. Then $\nu^\tau(V) \in \{ 0, \pm 1 \}$ and the following statements are equivalent: \\
  \indent {\rm (1)} $\nu^\tau(V) \ne 0$. \\
  \indent {\rm (2)} $\tau^\natural(V)$ is isomorphic to the dual module $V^\vee$ as a $H$-module. \\
  \indent {\rm (3)} There exists a non-degenerate bilinear form $b$ on $V$ satisfying \\
  \begin{equation*}
    b(\tau(h) v, w) = b(v, S(h) w) \text{\quad for all $v, w \in V$}
  \end{equation*}
  If one of the above statements holds, then such a bilinear form $b$ is unique up to scalar multiples and satisfies $b(w, g v) = \nu^\tau(V) \cdot b(v, w)$ for all $v, w \in V$.
\end{theorem}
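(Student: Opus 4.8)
The plan is to derive the theorem by applying Theorem~\ref{thm:piv-FS} to the pivotal algebra $A^\tau = (H, S^\tau, g)$, where $S^\tau = S \circ \tau$, and then to translate its conditions (1)--(3) back into the language of $H$. First I would record that $A^\tau$ is indeed a pivotal algebra whose FS indicator computes $\nu^\tau$: by Lemma~\ref{lem:can-piv-1} the involution $\tau$ fixes the canonical pivotal element, $\tau(g) = g$, and $S\tau = \tau S$ holds by the definition of a morphism of quasi-Hopf algebras, so $\tau$ is an involution of the pivotal algebra $(H, S, g)$ in the sense of \S\ref{sec:piv-alg}; hence $A^\tau$ is a pivotal algebra, and, as observed there, $\nu^\tau(V)$ equals the untwisted FS indicator of $V$ viewed as an object of $\fdMod(A^\tau)$. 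Since $k$ is algebraically closed and $V$ is finite-dimensional simple, Schur's lemma gives $\End_H(V) = k$, so $V$ is absolutely simple in $\fdMod(A^\tau)$; Theorem~\ref{thm:piv-FS} then immediately delivers $\nu^\tau(V) \in \{0, \pm 1\}$, the equivalence of its three conditions, and the uniqueness and symmetry statement for $b$, the latter reading $b(w, gv) = \nu^\tau(V)\, b(v, w)$ because the pivotal element of $A^\tau$ is again $g$.

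The remaining step is to match up the conditions. For (2): by~\eqref{eq:dual-module} the dual of $V$ in $\fdMod(A^\tau)$ carries the action $\langle a f, v \rangle = \langle f, S^\tau(a) v \rangle = \langle f, S(\tau(a)) v \rangle$, which is exactly $\tau^\natural(V^\vee)$, where $V^\vee$ denotes the $H$-dual of $V$. Since $\tau^2 = \id_H$, the functor $\tau^\natural$ is an equivalence equal to its own inverse, so the statement that $V$ is isomorphic to its $A^\tau$-dual is equivalent to $\tau^\natural(V) \cong V^\vee$, which is condition (2). For (3): condition~\eqref{eq:S-adjoint-bilin} for $A^\tau$ reads $b(a v, w) = b(v, S(\tau(a)) w)$ for all $a \in H$; the bijective substitution $a = \tau(h)$ together with $\tau^2 = \id_H$ turns this into $b(\tau(h) v, w) = b(v, S(h) w)$, which is condition (3).

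I expect no genuine obstacle here: the argument is a reduction together with routine bookkeeping. The points that need care are that $\tau$ fixes $g$ (supplied by Lemma~\ref{lem:can-piv-1}), that the bijectivity and involutivity of $\tau$ are invoked correctly both in the substitution $a = \tau(h)$ and in passing from a condition about $\tau^\natural(V^\vee)$ to one about $\tau^\natural(V)$, and that the passage from simple to absolutely simple uses the algebraic closedness of $k$.
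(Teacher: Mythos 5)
Your proposal is correct and follows exactly the paper's route: the paper obtains this theorem by applying Theorem~\ref{thm:piv-FS} to the pivotal algebra $(H, S\circ\tau, g)$, using Lemma~\ref{lem:can-piv-1} to see that $\tau$ fixes the canonical pivotal element and hence is an involution of the pivotal algebra $(H,S,g)$. The translations of conditions (2) and (3) that you spell out are the routine bookkeeping the paper leaves implicit, and they are carried out correctly.
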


Next we express the number $\nu^\tau(V)$ by using the character of $V$. To that end, it is sufficient to find a separability idempotent of $H$. Let $\Lambda \in H$ be the Haar integral of $H$ (see \cite{1999math4164H} and \cite{MR1615781}). We set
\begin{align*}
  p_L & = \Phi^2 S^{-1}(\Phi^1 \beta) \otimes \Phi^{3}, &
  q_L & = S(\overline{\Phi}{}^1) \alpha \overline{\Phi}{}^2 \otimes \overline{\Phi}{}^{3} \\
  p_R & = \overline{\Phi}{}^1 \otimes \overline{\Phi}{}^2 \beta S(\overline{\Phi}{}^1), &
  q_R & = \Phi^1 \otimes S^{-1}(\alpha \Phi^{3}) \Phi^2
\end{align*}
and fix $p \in \{ p_L, p_R \}$ and $q \in \{ q_L, q_R \}$. Following \cite[Lemma~3.1]{MR2104908}, we have
\begin{align}
  \label{eq:q-Hopf-integ-1}
  \Lambda_{(1)} p^1 a \otimes \Lambda_{(2)} p^2
  & = \Lambda_{(1)} p^1 \otimes \Lambda_{(2)} p^2 S(a) \\
  \label{eq:q-Hopf-integ-2}
  S(a) q^1 \Lambda_{(1)} \otimes q^2 \Lambda_{(2)}
  & = q^1 \Lambda_{(1)} \otimes a q^2 \Lambda_{(2)}
\end{align}
for all $a \in A$. From these identities, we see that both $S(\Lambda_{(1)}p^1) \otimes \alpha \Lambda_{(2)} p^2$ and $q^1 \Lambda_{(1)} \beta \otimes S(q^2 \Lambda_{(2)})$ are separability idempotents. Applying Theorem~\ref{thm:piv-FS-ind-ch} to $(H, S \tau, g)$, we have
\begin{equation*}
  \nu^\tau(V)
  = \chi_V \Big( S \tau S(\Lambda_{(1)}p^1) g \alpha \Lambda_{(2)} p^2 \Big)
  = \chi_V \Big( S \tau (q^1 \Lambda_{(1)} \beta) g S(q^2 \Lambda_{(2)}) \Big)
\end{equation*}
for all $V \in \fdMod(H)$. Hence, by using the former expression, we compute
\begin{align*}
  \nu^\tau(V)
  & = \chi_V(S^2(\tau(\Lambda_{(1)}p^1)) g \cdot \alpha \Lambda_{(2)} p^2)
  = \chi_V(g \cdot \tau(\Lambda_{(1)}p^1) \cdot \alpha \Lambda_{(2)} p^2) \\
  & = \chi_V(g \cdot \tau(\Lambda_{(1)}p^1 \tau(\alpha)) \cdot \Lambda_{(2)} p^2)
  \mathop{=}^{\eqref{eq:q-Hopf-integ-1}} \chi_V(g \cdot \tau(\Lambda_{(1)}p^1) \Lambda_{(2)} p^2 \cdot S\tau(\alpha))) \\
  & = \chi_V(S\tau(\alpha) g \cdot \tau(\Lambda_{(1)}p^1) \Lambda_{(2)} p^2)
\end{align*}
Note that the formula of Mason and Ng in \cite{MR2104908} does not involve $g$. To exclude $g$ from the above formula of $\nu^\tau(V)$, we require:

\begin{lemma}
  \label{lem:can-piv-2}
  Fix $p \in \{ p_L, p_R \}$ and $q \in \{ q_L, q_R \}$. Then we have
  \begin{equation}
    \label{eq:q-Hopf-integ-3}
    g^{-1} S(\beta) = S(\Lambda_{(1)} p^1) \Lambda_{(2)} p^2, \quad
    S(\alpha)g = S(q^2\Lambda_{(2)}) q^1\Lambda_{(1)}
  \end{equation}
\end{lemma}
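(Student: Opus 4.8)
The plan is to deduce both identities in~\eqref{eq:q-Hopf-integ-3} from the explicit description of the canonical pivotal element $g$ that comes out of Hausser--Nill's integral theory for quasi-Hopf algebras, as exploited by Mason and Ng in~\cite{MR2104908}. As a preliminary observation, note that the relations~\eqref{eq:q-Hopf-integ-1} and~\eqref{eq:q-Hopf-integ-2} hold for each choice $p\in\{p_L,p_R\}$ and $q\in\{q_L,q_R\}$; applying $S\otimes\id$ to~\eqref{eq:q-Hopf-integ-1} and multiplying the two tensor legs gives $S(a)\cdot S(\Lambda_{(1)}p^1)\Lambda_{(2)}p^2=S(\Lambda_{(1)}p^1)\Lambda_{(2)}p^2\cdot S(a)$ for all $a\in H$, so, $S$ being bijective, the element $S(\Lambda_{(1)}p^1)\Lambda_{(2)}p^2$ is central; symmetrically $S(q^2\Lambda_{(2)})q^1\Lambda_{(1)}$ is central. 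In particular, once~\eqref{eq:q-Hopf-integ-3} is proved it shows a posteriori that these expressions are independent of the chosen representatives, so in the argument I may fix $p=p_R$ and $q=q_R$, the representatives used in~\cite{MR2104908}.

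For the first identity I would invoke the Hausser--Nill formula that expresses $g$ (equivalently $g^{-1}$) as an explicit element built from the Haar integral $\Lambda$, the quasi-antipode data $\alpha,\beta$, and the associator $\Phi$. Granting such a formula, the claimed equality $g^{-1}S(\beta)=S(\Lambda_{(1)}p^1)\Lambda_{(2)}p^2$ becomes a finite manipulation using the quasi-Hopf axioms for $(\Delta,\varepsilon,\Phi,S,\alpha,\beta)$, the intertwining relations~\eqref{eq:q-Hopf-integ-1}--\eqref{eq:q-Hopf-integ-2}, and the defining properties $S(g)g=1$ and $S^2(h)=ghg^{-1}$ of the canonical pivotal element. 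For the second identity, $S(\alpha)g=S(q^2\Lambda_{(2)})q^1\Lambda_{(1)}$, I would pass to a suitable ``opposite'' quasi-Hopf algebra $H^{\op}$ --- with antipode $S^{-1}$, with $\alpha$ and $\beta$ essentially interchanged, with the same Haar integral (as $\Lambda$ is two-sided), and in which the roles of $p_{L/R}$ and $q_{L/R}$ are swapped --- under which the first identity for $H^{\op}$ unwinds to the second identity for $H$; alternatively, one can apply $S$ directly to the first identity and push the resulting $S$-versus-$S^{-1}$ and $\alpha$-versus-$\beta$ discrepancies through by hand, which is more laborious.

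The step I expect to be the main obstacle is pinning down, with the correct sign and side conventions, the explicit form of the canonical pivotal element $g$ of a semisimple quasi-Hopf algebra and matching it to the conventions of~\cite{MR2104908}: $g$ is singled out not merely by implementing $S^2$ --- that only determines it up to a central grouplike element --- but by the Frobenius--Perron dimension normalization of the canonical pivotal structure on the fusion category $\fdMod(H)$, and every intermediate expression carries the associator $\Phi$, so the bookkeeping is heavy. Once that formula is secured, verifying~\eqref{eq:q-Hopf-integ-3} is routine, if tedious.
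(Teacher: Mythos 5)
Your plan correctly isolates the real difficulty --- relating the canonical pivotal element $g$, which is pinned down only by the Frobenius--Perron normalization of the canonical pivotal structure on $\fdMod(H)$, to explicit expressions in $\Lambda$, $\alpha$, $\beta$ and $\Phi$ --- but it does not overcome it. The entire content of the lemma is deferred to an unstated ``Hausser--Nill formula for $g$'' followed by a ``finite manipulation'' that is never performed; since $S^2(h)=ghg^{-1}$ determines $g$ only up to a central grouplike element, any explicit formula you import must itself be matched against the dimension normalization, which is precisely the step you flag as the main obstacle and then leave open. The preliminary centrality observation (apply $S\otimes\id$ to~\eqref{eq:q-Hopf-integ-1} and multiply the legs) is correct but only tells you a posteriori that the expressions are well defined; it does not advance the identification with $g^{-1}S(\beta)$ or $S(\alpha)g$. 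As written, the proposal is a research plan rather than a proof.

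The paper avoids the explicit formula for $g$ altogether. For the second identity it fixes a simple module $V$ and compares two $H$-linear functionals $V\otimes V^\vee\to k$: one is $v\otimes f\mapsto\langle q^2\Lambda_{(2)}f,\,q^1\Lambda_{(1)}v\rangle$, which is $H$-linear by~\eqref{eq:q-Hopf-integ-2} and sends the coevaluation element $c_V(1)$ to $\dim_k(V)$ by the normalization of the Haar integral; the other is $e_{V}\circ(j_V\otimes\id_{V^\vee})$, which has the same two properties by the very definition of the canonical pivotal structure. Since $\Hom_H(V\otimes V^\vee,k)\cong\End_H(V)\cong k$, the two functionals coincide, which unwinds to $S(\alpha)g=S(q^2\Lambda_{(2)})q^1\Lambda_{(1)}$ acting on $V$; semisimplicity then promotes this to an identity in $H$. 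The first identity is quoted from Schauenburg's paper (with an inversion of the convention for $g$) and admits the same style of argument using the coevaluation in place of the evaluation. If you want to salvage your route, you would have to supply and verify the explicit formula for $g$ yourself, at which point you would essentially be reproving the lemma by the representation-theoretic argument above.
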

\begin{proof}
  The first identity is proved in \cite{MR2095575} (where our $g$ appears as $g^{-1}$) and the second can be proved in a similar way. For the sake of completeness, we give a detailed proof of the second identity.

  Let $V$ be a simple $H$-module and set $c = c_V(1)$. The map
  \begin{equation*}
    e_1: V \otimes V^\vee \to k,
    \quad e_2(v \otimes f) = \langle q^2\Lambda_{(2)} f, q^1 \Lambda_{(1)} v \rangle
    \quad (v \in V, f \in V^\vee)
  \end{equation*}
  is an $H$-linear map such that $e_1(c) = \dim_k(V)$. On the other hand, by the definition of the canonical pivotal structure, we see that
  \begin{equation*}
    \begin{CD}
      e_2: V \otimes V^\vee @>{j_V \otimes \id_{V^\vee}}>> V^{\vee\vee} \otimes V^\vee @>{e_V}>> k
    \end{CD}
  \end{equation*}
  has the same property. Since $\Hom_H(V \otimes V^\vee, k) \cong \Hom_H(V, V) \cong k$, we have $e_1 = e_2$. This implies that $\langle f, S(\alpha)g v \rangle = \langle f, S(q^2 \Lambda_{(2)}) t^1 \Lambda_{(1)} v \rangle$ holds for all $f \in V^\vee$ and $v \in V$. In conclusion, $S(\alpha)g = S(q^2\Lambda_{(2)}) q^1\Lambda_{(1)}$ holds on each simple module $V$. Since $H$ is semisimple, the identity holds in $H$.
\end{proof}

By Lemmas~\ref{lem:can-piv-1} and~\ref{lem:can-piv-2}, we have $S\tau(\alpha) g = \tau(S(\alpha)g) = S(\tau(q^2\Lambda'_{(2)})) \cdot \tau(q^1 \Lambda'_{(1)})$, where $\Lambda' = \Lambda$ is a copy of $\Lambda$. Hence we compute:
\begin{align*}
  \nu^\tau(V)
  & = \chi_V(S(\tau(q^2\Lambda'_{(2)})) \cdot \tau(q^1 \Lambda'_{(1)})
  \cdot \tau(\Lambda_{(1)}p^1) \Lambda_{(2)} p^2) \\
  & = \chi_V(\tau(q^1 \Lambda'_{(1)} \Lambda_{(1)}p^1)
  \cdot \Lambda_{(2)} p^2 S(\tau(q^2\Lambda'_{(2)}))) \\
  & = \chi_V(\tau(q^1 \Lambda'_{(1)} \Lambda_{(1)}p^1 \tau(q^2\Lambda'_{(2)})) \cdot \Lambda_{(2)} p^2) \\
  & = \chi_V(\tau(q^1 \Lambda'_{(1)} \Lambda_{(1)}p^1)
  q^2\Lambda'_{(2)} \Lambda_{(2)} p^2)
\end{align*}
Since $\Delta: H \to H \otimes H$ is an algebra map, we have $\Lambda_{(1)}' \Lambda_{(1)} \otimes \Lambda_{(2)}' \Lambda_{(2)} = \Delta(\Lambda' \Lambda) = \varepsilon(\Lambda') \Delta(\Lambda) = \Delta(\Lambda)$. Hence, we finally obtain $\nu^\tau(V) = \chi_V (\tau(q^1 \Lambda_{(1)}p^1) q^2 \Lambda_{(2)} p^2)$. Letting $\tau = \id_H$, we recover the results of Mason and Ng \cite{MR2104908}. Assuming $H$ to be a Hopf algebra, we recover the results of Sage and Vega \cite{MR2879228}.

\section{Coalgebras}
\label{sec:coalgebras}

\subsection{Copivotal Coalgebras}

In this section, we introduce the dual notion of pivotal algebras and study the Frobenius--Schur theory for them. For the reader's convenience, we briefly recall some basic results on coalgebras.

Given a coalgebra $C$, we denote by $\Com(C)$ the category of {\em right} $C$-comodules and by $\fdCom(C)$ its full subcategory of finite-dimensional objects. We express the coaction of $V \in \Com(C)$ as
\begin{equation*}
  \rho_V: V \to V \otimes C,
  \quad v \mapsto v_{(0)} \otimes v_{(1)}
  \quad (v \in V)
\end{equation*}

The {\em convolution product} of $\lambda, \mu \in C^\vee$ is defined by $\langle \lambda \star \mu, c \rangle = \langle \lambda, c_{(1)} \rangle \langle \mu, c_{(2)} \rangle$ for all $c \in C$. $C^\vee$ is an algebra, called the {\em dual algebra}, with multiplication $\star$ and unit $\varepsilon$. The algebra $C^\vee$ acts from the left on each $V \in \Com(C)$ by
$\text{$\rightharpoonup$}: C^\vee \otimes V \to V$, $\lambda \rightharpoonup v = v_{(0)} \langle \lambda, v_{(1)} \rangle$ ($\lambda \in C^\vee, v \in V$). This defines a $k$-linear fully faithful functors $\Com(C) \to \Mod(C^\vee)$ and
\begin{equation}
  \label{eq:com-to-mod}
  \fdCom(C) \to \fdMod(C^\vee)
\end{equation}
which are not equivalences in general. If $C$ is finite-dimensional, then these functors are isomorphisms of categories. See,  e.g., \cite{MR1786197} for details.

Fix a basis $\{ v_i \}_{i = 1, \dotsc, n}$ of $V \in \fdCom(C)$. Then we can define $c_{i j} \in C$ by $\rho_V(v_j) = \sum_{i = 1}^n v_i \otimes c_{i j}$ ($j = 1, \dotsc, n$). The matrix $(c_{i j})$ is called the {\em matrix corepresentation} of $V$ with respect to the basis $\{ v_i \}$. By the definition of comodules, we have
\begin{equation}
  \label{eq:matrix-corep}
  \Delta(c_{i j}) = \sum_{s = 1}^n c_{i s} \otimes c_{s j},
  \quad \varepsilon(c_{i j}) = \delta_{i j}
\end{equation}
for all $i, j = 1, \dotsc, n$. Hence $C_V = \mathrm{span}_k \{ c_{i j} \mid i, j = 1, \dotsc, n \}$ is a subcoalgebra of $C$. We call $C_V$ the {\em coefficient subcoalgebra} of $C$. $C_V$ has a special element $t_V = \sum_i c_{i i}$, called the {\em character} of $V$. If we regard $V$ as a left $C^\vee$-module via \eqref{eq:com-to-mod} and denote its character by $\chi_V$, then we have
\begin{equation}
  \label{eq:coalg-character}
  \chi_V(\lambda)
  = \langle \lambda, c_{11} \rangle + \dotsb + \langle \lambda, c_{n n} \rangle
  = \lambda(t_V)
\end{equation}
for all $\lambda \in C^\vee$.

Now let $V$ be a finite-dimensional vector space with basis $\{ v_i \}_{i = 1, \dotsc, n}$ and let $\{ v^i \}$ denote the dual basis. Then $\coEnd(V) = V^\vee \otimes V$ has a basis $e_{i j} = v^i \otimes v_j$ ($i, j = 1, \dotsc, n$) and turns into a coalgebra with $\Delta(e_{i j}) = \sum_{s = 1}^n e_{i s} \otimes e_{s j}$, $\varepsilon(e_{i j}) = \delta_{i j}$. $\coEnd(V)$ coacts on $V$ from the right by
\begin{equation*}
  V \to V \otimes \coEnd(V),
  \quad v_j \mapsto \sum_{j = 1}^n v_i \otimes e_{i j}
  \quad (j = 1, \dotsc, n)
\end{equation*}
Suppose that $C$ coacts on $V$. Let $(c_{i j})$ be the matrix corepresentation of $V$ with respect to $\{ v_i \}$. By \eqref{eq:matrix-corep}, the linear map $\phi: \coEnd(V) \to C$, $\phi(e_{i j}) = c_{i j}$ is a coalgebra map. Conversely, if a coalgebra map $\phi: \coEnd(V) \to C$ is given, $V$ is a right $C$-comodule by
\begin{equation*}
  \rho: V \to V \otimes C,
  \quad v_j \mapsto \sum_{i = 1}^n v_i \otimes \phi(e_{i j})
  \quad (j = 1, \dotsc, n)
\end{equation*}
These constructions give a bijection between the set of linear maps $\rho: V \to V \otimes C$ making $V$ into a right $C$-comodule and the set of coalgebra maps $\phi: \coEnd(V) \to C$.

Suppose that $V \in \fdCom(C)$ is absolutely simple. As the dual of~\eqref{eq:abs-simp-surj}, we have that the corresponding coalgebra map $\phi: \coEnd(V) \to C$ is injective. Let $(c_{i j})$ be the matrix corepresentation of $V$ with respect to some basis of $V$. The injectivity of $\phi$ implies that the set $\{ c_{i j} \}$ is linearly independent.

Now we introduce {\em copivotal coalgebras} as the dual notion of pivotal algebras:

\begin{definition}
  A {\em copivotal coalgebra} is a triple $(C, S, \gamma)$ consisting of a coalgebra $C$, an anti-coalgebra map $S: C \to C$ and a linear map $\gamma: C \to k$ satisfying
  \begin{equation*}
    S^2(c) = \langle \gamma, c_{(1)} \rangle c_{(2)} \langle \overline{\gamma}, c_{(3)} \rangle
    \text{\quad and \quad} \overline{\gamma} = \gamma \circ S
  \end{equation*}
  for all $c \in C$, where $\overline{\gamma}: C \to k$ is the inverse of $\gamma$ with respect to $\star$.
\end{definition}

Let $C = (C, S, \gamma)$ is a copivotal coalgebra. If $V \in \fdCom(C)$, then we can make $V^\vee$ into a right $C$-comodule as follows: First fix a basis $\{ v_i \}$ of $V$ and let $(c_{i j})$ be the matrix corepresentation of $V$ with respect to the basis $\{ v_i \}$. Then we define the coaction of $C$ on $V^\vee$ by
\begin{equation}
  \label{eq:dual-comodule}
  \rho_{V^\vee}: V^\vee \to V^\vee \otimes C,
  \quad \rho_{V^\vee}(v^j) = \sum_{i = 1}^n v^i \otimes S(c_{j i})
  \quad (i = 1, \dotsc, n)
\end{equation}
where $\{ v^i \}$ is the dual basis of $\{ v_i \}$. This coaction does not depend on the choice of the basis and has the following characterization, which is rather useful than the above explicit formula:
\begin{equation*}
  \langle f_{(0)}, v \rangle f_{(1)} = \langle f, v_{(0)} \rangle S(v_{(1)})
  \quad (f \in V^\vee, v \in V)
\end{equation*}
For each $V \in \fdCom(C)$, we define $j_V: V \to V^{\vee\vee}$ by
\begin{equation*}
  \langle j_V(v), f \rangle = \langle f, \gamma \rightharpoonup v \rangle
  \quad (= \langle f, v_{(0)} \rangle \langle \gamma, v_{(1)} \rangle)
  \quad (f \in V^\vee, v \in V)
\end{equation*}
In a similar way as Proposition~\ref{sec:piv-alg-cat-dual}, we prove:

\begin{proposition}
  $\fdMod(C)$ is a category with strong duality over $k$.
\end{proposition}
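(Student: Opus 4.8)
The plan is to deduce the statement from its algebra counterpart, Proposition~\ref{sec:piv-alg-cat-dual}, using the fully faithful embedding~\eqref{eq:com-to-mod}. The first step is to observe that the dual coalgebra $C^\vee$, equipped with the convolution product, becomes a pivotal algebra $(C^\vee, S^\vee, \gamma)$: the transpose $S^\vee\colon C^\vee\to C^\vee$ of the anti-coalgebra map $S$ is an anti-algebra map of the convolution algebra (this is immediate from $\Delta\circ S=(S\otimes S)\circ(\text{flip})\circ\Delta$), one has $\langle S^\vee(\gamma),c\rangle=\langle\gamma,S(c)\rangle=\langle\gamma\circ S,c\rangle=\langle\overline{\gamma},c\rangle$ so that $S^\vee(\gamma)=\overline{\gamma}$ is the $\star$-inverse of $\gamma$, and evaluating on $c\in C$ gives $\langle (S^\vee)^2(\lambda),c\rangle=\langle\lambda,S^2(c)\rangle=\langle\gamma\star\lambda\star\overline{\gamma},c\rangle$, i.e.\ $(S^\vee)^2(\lambda)=\gamma\star\lambda\star\overline{\gamma}$. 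Hence Proposition~\ref{sec:piv-alg-cat-dual} applies: $\fdMod(C^\vee)$ is an Abelian category with strong duality over $k$, whose duality functor and pivotal morphism are given by~\eqref{eq:dual-module} and~\eqref{eq:piv-mor-alg} with $g=\gamma$.

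The second step is to check that the $k$-linear fully faithful functor $\Phi\colon\fdCom(C)\to\fdMod(C^\vee)$ of~\eqref{eq:com-to-mod}, $\lambda\rightharpoonup v=v_{(0)}\langle\lambda,v_{(1)}\rangle$, intertwines the two dualities strictly. For $V\in\fdCom(C)$, unwinding~\eqref{eq:dual-module} shows that the $C^\vee$-module $\Phi(V)^\vee$ has action $\langle\lambda\cdot f,v\rangle=\langle f,S^\vee(\lambda)v\rangle=\langle f,v_{(0)}\rangle\langle\lambda,S(v_{(1)})\rangle$, which is exactly the $C^\vee$-action induced by the $C$-comodule structure~\eqref{eq:dual-comodule} on $V^\vee$ (equivalently, by its basis-free description $\langle f_{(0)},v\rangle f_{(1)}=\langle f,v_{(0)}\rangle S(v_{(1)})$). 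Thus $\Phi(V^\vee)=\Phi(V)^\vee$, and in particular the image of $\Phi$ is closed under the module dual. Likewise, comparing~\eqref{eq:piv-mor-alg} with the definition of $j_V$ preceding the proposition, both send $v$ to the functional $f\mapsto\langle f,\gamma\rightharpoonup v\rangle$, so $\Phi(j_V)=j_{\Phi(V)}$; and $h^\vee$ for a comodule map $h$ is the ordinary transpose, so $\Phi(h^\vee)=\Phi(h)^\vee$. Consequently $(\Phi,\id)$ is a strict duality preserving functor once $\fdCom(C)$ is equipped with $\bigl((-)^\vee,j\bigr)$.

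The third step is to transport the axioms. Since $\Phi$ is $k$-linear, fully faithful, and intertwines the dualities, the identity $(j_X)^\vee\circ j_{X^\vee}=\id_{X^\vee}$ in $\fdCom(C)$ follows by applying $\Phi$ and using the corresponding identity in $\fdMod(C^\vee)$; naturality of $j$ is transported the same way. Moreover $j_V$ is an isomorphism in $\fdCom(C)$ because $\Phi(j_V)=j_{\Phi(V)}$ is an isomorphism in $\fdMod(C^\vee)$ and its inverse, being a morphism between objects in the (replete on morphisms) image of $\Phi$, lifts uniquely through the fully faithful $\Phi$. Finally, $\fdCom(C)$ is a $k$-linear Abelian category (standard) with $k$-linear duality functor, and the finiteness condition~\eqref{eq:assumption-fin-dim} holds since $\Hom_{\fdCom(C)}(V,W)\cong\Hom_{C^\vee}(V,W)$ sits inside $\Hom_k(V,W)$. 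Combining these yields that $\fdCom(C)$ is a category with strong duality over $k$.

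I do not expect a serious obstacle; the content is bookkeeping about transpose conventions. The one point needing a little care is the well-definedness of the dual comodule structure~\eqref{eq:dual-comodule} — independence of the chosen basis and the comodule axioms — which is precisely what makes the image of $\Phi$ closed under the module dual; this follows from $S$ being an anti-coalgebra map, or can be checked directly from the basis-free characterization. The only place finite-dimensionality (hence \emph{strength} of the duality rather than mere duality) enters is the invertibility of $j_V$: the operator $\gamma\rightharpoonup(-)$ is a linear automorphism of $V$ with inverse $\overline{\gamma}\rightharpoonup(-)$, and $j_V$ is its composite with the classical isomorphism $\iota_V\colon V\to V^{\vee\vee}$ of~\eqref{eq:k-Vec-piv-mor}.
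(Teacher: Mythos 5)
Your proof is correct, but it takes a different route from the paper. The paper proves this proposition by repeating, in coalgebraic form, the direct verifications used for Proposition~\ref{sec:piv-alg-cat-dual}: one checks by hand that the coaction \eqref{eq:dual-comodule} is well defined, that $j_V$ is a comodule morphism, natural, satisfies \eqref{eq:duality-1}, and is invertible in finite dimension. You instead reduce to the already-proved algebra case by first establishing that $(C^\vee, S^\vee, \gamma)$ is a pivotal algebra and that the fully faithful embedding \eqref{eq:com-to-mod} sends the dual comodule $V^\vee$ to the dual module $\Phi(V)^\vee$ and $j_V$ to $j_{\Phi(V)}$, and then transporting the axioms back along the faithful functor. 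This is a legitimate reordering: the paper derives these same facts (the dual pivotal algebra and the strictness of \eqref{eq:com-to-mod} as a duality preserving functor) immediately \emph{after} the proposition, whereas you use them as the engine of the proof; your version buys economy (no repeated computation of $A$-linearity, naturality, and \eqref{eq:duality-1}) at the cost of having to argue slightly more carefully that the relevant structure maps land in the image of $\Phi$ and that identities and invertibility descend through a fully faithful functor, all of which you do address. The one hypothesis you rely on that the paper only asserts, rather than proves, is the full faithfulness of \eqref{eq:com-to-mod}; since the paper takes this as known, your argument is complete.
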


The triple $C^\vee = (C^\vee, S^\vee, \gamma)$ is a pivotal algebra, which we call the {\em dual pivotal algebra} of $C$. Let $V \in \fdMod(C)$. For all $\lambda \in C^\vee$, $f \in V^\vee$ and $v \in V$, we have
\begin{equation*}
  \langle \lambda \rightharpoonup f, v \rangle
  = \langle f_{(0)}, v \rangle \langle \lambda, f_{(1)} \rangle
  = \langle f, v_{(0)} \rangle \langle \lambda, S(v_{(1)}) \rangle
  = \langle f, S^\vee(\lambda) \rightharpoonup v \rangle
\end{equation*}
This implies that the Functor \eqref{eq:com-to-mod} is in fact a strict duality preserving functor. In what follows, we often regard $\fdCom(C)$ as a full subcategory of $\fdMod(C^\vee)$.

\subsection{FS Indicator for Copivotal Coalgebras}

Let $C = (C, S, \gamma)$ be a copivotal coalgebra, and let $V \in \fdCom(V)$. We denote by $\Bil(V)$ the set of all bilinear forms on $V$ and by $\Bil_C(V)$ its subset consisting of those $b \in \Bil(V)$ satisfying
\begin{equation}
  \label{eq:S-adjoint-bilin-coalg}
  b(v_{(0)}, w) v_{(1)} = b(v, w_{(0)}) S(w_{(1)}) \text{\quad for all $v, w \in V$}
\end{equation}
$\Bil_C(V)$ is the image of $\Hom_C(V, V^\vee) \subset \Hom_k(V, V^\vee)$ under the canonical Isomorphism \eqref{eq:canonical-map}. Define $\Sigma_V: \Bil_C(V) \to \Bil_C(V)$ in the same way as before. Then, for all $b \in \Bil_C(V), v, w \in V$, we have
\begin{equation*}
  \Sigma_V(b)(v, w)
  = b(w, \gamma \rightharpoonup v)
\end{equation*}
Now let $\Bil_{C}^\pm(V)$ be the eigenspace of $\Sigma_V$ with eigenvalue $\pm 1$:
\begin{equation*}
  \Bil_{C}^{\pm}(V) = \{ b \in \Bil_{C}(V) \mid
  \text{$b(w, \gamma \rightharpoonup v) = \pm b(v, w)$ for all $v, w \in V$} \}
\end{equation*}
Then we have $\nu(V) = \dim_k \Bil_C^+(V) - \dim_k \Bil_C^-(V)$ as a counterpart of Proposition~\ref{prop:FS-ind-basic} (b). Now we immediately obtain the following coalgebraic version of Theorem \ref{thm:piv-FS}:

\begin{theorem}
  \label{thm:copiv-FS}
  If $V \in \fdCom(C)$ is absolutely simple, then we have $\nu(V) \in \{ 0, \pm 1 \}$. Moreover, the following are equivalent: \\
  \indent {\rm (1)} $\nu(V) \ne 0$. \\
  \indent {\rm (2)} $V$ is isomorphic to $V^\vee$ as a right $C$-comodule. \\
  \indent {\rm (3)} There exists a non-degenerate bilinear form $b$ on $V$ satisfying~\eqref{eq:S-adjoint-bilin-coalg}. \\
  If one of the above statements holds, then such a bilinear form $b$ is unique up to scalar multiples and satisfies $b(w, \gamma \rightharpoonup v) = \nu(V) \cdot b(v, w)$ for all $v, w \in V$.
\end{theorem}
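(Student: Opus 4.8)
The plan is to obtain Theorem~\ref{thm:copiv-FS} exactly as one obtains Theorem~\ref{thm:piv-FS}, namely by specialising the results of Section~\ref{sec:categ-with-dual} to $\fdCom(C)$ and translating them through the interpretation of $\nu(V)$ in terms of bilinear forms established just above the statement. Concretely, since the preceding proposition says that $\fdCom(C)$ is an Abelian category with strong duality over $k$, and $V$ is absolutely simple there, Proposition~\ref{prop:FS-ind-abs-simple}(b) applies verbatim and yields at once that $\nu(V)\in\{0,\pm1\}$ and that $\nu(V)\ne 0$ if and only if $V\cong V^\vee$ as right $C$-comodules; this is the equivalence (1)$\Leftrightarrow$(2).

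For (2)$\Leftrightarrow$(3) I would use the canonical isomorphism $B_V\colon\Hom_k(V,V^\vee)\xrightarrow{\sim}\Bil(V)$ of~\eqref{eq:canonical-map}, which restricts to an isomorphism $\Hom_C(V,V^\vee)\xrightarrow{\sim}\Bil_C(V)$, where by definition $\Bil_C(V)$ is the set of bilinear forms satisfying~\eqref{eq:S-adjoint-bilin-coalg}. A comodule map $f\colon V\to V^\vee$ is an isomorphism precisely when it is injective (both spaces being finite-dimensional of the same dimension), which is precisely when $B_V(f)$ is non-degenerate; hence an isomorphism $V\cong V^\vee$ of comodules is literally the same datum as a non-degenerate form $b\in\Bil_C(V)$, giving (2)$\Leftrightarrow$(3).

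Suppose now one, hence all, of (1)--(3) holds. Then $V$ is self-dual and absolutely simple, so $\dim_k\Bil_C(V)=\dim_k\Hom_C(V,V^\vee)=\dim_k\End_C(V)=1$; in particular $b$ is unique up to a scalar, and by Schur's lemma every nonzero element of $\Hom_C(V,V^\vee)$ is an isomorphism, so every nonzero $b\in\Bil_C(V)$ is automatically non-degenerate. Finally, $\Sigma_V$ is an involution of the one-dimensional space $\Bil_C(V)$ (the counterpart of~\eqref{eq:transposition-3}, transported by $B_V$), hence acts by $+1$ or $-1$; since $\nu(V)=\dim_k\Bil_C^+(V)-\dim_k\Bil_C^-(V)$, that scalar must equal $\nu(V)$. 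Combining this with the formula $\Sigma_V(b)(v,w)=b(w,\gamma\rightharpoonup v)$ computed before the statement gives $b(w,\gamma\rightharpoonup v)=\nu(V)\cdot b(v,w)$, as claimed.

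An equally short alternative is to deduce the whole theorem from Theorem~\ref{thm:piv-FS} applied to the dual pivotal algebra $C^\vee=(C^\vee,S^\vee,\gamma)$: the functor~\eqref{eq:com-to-mod} is $k$-linear, fully faithful and strict, hence strong, as a duality preserving functor, so Proposition~\ref{prop:FS-ind-inv} shows it preserves the FS indicator, and since a finite-dimensional $C^\vee$-submodule of $V$ is automatically a subcomodule, $V$ is still absolutely simple as a $C^\vee$-module. There is no genuine obstacle in either route; the only points that need care are the bookkeeping identifications --- that $\Bil_C(V)$ is exactly the space of $S^\vee$-adjoint forms in the sense of~\eqref{eq:S-adjoint-bilin} and that $g=\gamma\in C^\vee$ acts on $V$ as $\gamma\rightharpoonup(-)$ --- together with the transfer of absolute simplicity along~\eqref{eq:com-to-mod}.
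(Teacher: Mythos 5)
Your proposal is correct and follows essentially the same route as the paper, which obtains Theorem~\ref{thm:copiv-FS} by "immediately" specialising Proposition~\ref{prop:FS-ind-abs-simple} and the identity $\nu(V)=\dim_k\Bil_C^+(V)-\dim_k\Bil_C^-(V)$ to $\fdCom(C)$, exactly as in Theorem~\ref{thm:piv-FS}; your bookkeeping of the equivalence (2)$\Leftrightarrow$(3) and of the sign via the involution $\Sigma_V$ on the one-dimensional space $\Bil_C(V)$ is precisely the intended argument. The alternative reduction to the dual pivotal algebra $C^\vee$ is also sound and is in fact the device the paper uses for the neighbouring results on copivotal coalgebras.
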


We prove several statements concerning the FS indicator of $V \in \fdCom(C)$. The proof will be done by reducing to the case of pivotal algebras in the following way: First fix a subcoalgebra $D \subset C$ satisfying
\begin{equation}
  \label{eq:copiv-subcoalg}
  \dim_k(D) < \infty, \quad C_V \subset D \text{\quad and \quad} S(D) \subset D
\end{equation}
Note that such a subcoalgebra $D$ always exists. Indeed, by~\eqref{eq:dual-comodule}, we have $C_{X^\vee} = S(C_X)$ for all $X \in \fdCom(C)$. If $V$ is a subcomodule of $X$, then $C_V$ is a subcoalgebra of $C_X$. Therefore, since $X = V \oplus V^\vee$ is self-dual and has $V$ as a subcomodule, $D = C_{V \oplus V^\vee}$ satisfies \eqref{eq:copiv-subcoalg}.

It is obvious that the triple $D = (D, S|_D, \gamma|_D)$ is a copivotal coalgebra and hence $D^\vee$ is a pivotal algebra. As we remarked in the above, the functor
\begin{equation*}
  \begin{CD}
    F_D: \fdMod(D^\vee) @>{\cong}>{\eqref{eq:com-to-mod}}> \fdCom(D) @>{\text{inclusion}}>> \fdCom(C)
  \end{CD}
\end{equation*}
is a $k$-linear fully faithful strict duality preserving functor. Hence, by Proposition~\ref{prop:FS-ind-inv}, $F_D$ preserves the FS indicator. By {\em regarding $V$ as a left $D^\vee$-module}, we mean taking $V_0 \in \fdMod(D^\vee)$ such that $F_D(V_0) = V$ and then identifying $V_0$ with $V$.

Now we prove an analogue of Theorem~\ref{thm:piv-Tr-S}. Let $\reg_C$ denote the coalgebra $C$ regarded as a right $C$-comodule by the comultiplication.

\begin{theorem}
  \label{thm:copiv-Tr-S}
  Let $C = (C, S, \gamma)$ be a finite-dimensional copivotal coalgebra. \\
  {\rm (a)} $\nu(\reg_C^\vee) = \Trace(Q)$, where $Q: C \to C$, $c \mapsto S(c_{(1)}) \langle \gamma, c_{(2)} \rangle$. \\
  {\rm (b)} If $C$ is co-Frobenius, then $\reg_C \cong \reg_C^\vee$ as right $C$-comodules. Hence, $\nu(\reg_C) = \nu(\reg_C^\vee)$.
\end{theorem}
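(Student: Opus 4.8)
The plan is to dualize the proof of Theorem~\ref{thm:piv-Tr-S} in the two parts.

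For part (a), the structural input is that $\reg_C$ is the cofree right $C$-comodule on $k$: for each $N\in\fdCom(C)$ the map
\[
  \Phi_N\colon\Hom_C(N,\reg_C)\to N^\vee,\qquad \Phi_N(f)=\varepsilon_C\circ f,
\]
is a natural isomorphism of vector spaces, with inverse $\psi\mapsto\bigl(n\mapsto\psi(n_{(0)})\,n_{(1)}\bigr)$. Since $\fdCom(C)$ is a category with strong duality, Proposition~\ref{prop:FS-ind-dual} applied to $X=\reg_C$ gives $\nu(\reg_C^\vee)=\Trace(\Trans^{\op}_{\reg_C,\reg_C})$, where $\Trans^{\op}_{\reg_C,\reg_C}$ is the operator $g\mapsto j_{\reg_C}^{-1}\circ g^\vee$ on $\Hom_C(\reg_C^\vee,\reg_C)$. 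I would transport this operator to $C$ itself along the composite of $\Phi_{\reg_C^\vee}\colon\Hom_C(\reg_C^\vee,\reg_C)\to(\reg_C^\vee)^\vee=\reg_C^{\vee\vee}$ with $j_{\reg_C}^{-1}\colon\reg_C^{\vee\vee}\to\reg_C=C$, and then verify by a direct Sweedler computation---using \eqref{eq:dual-comodule} for the dual comodule, the formula $\langle j_V(v),f\rangle=\langle f,\gamma\rightharpoonup v\rangle$ for the pivotal morphism, and the fact that $S$ is an anti-coalgebra map---that the resulting endomorphism of $C$ is exactly $Q\colon c\mapsto S(c_{(1)})\langle\gamma,c_{(2)}\rangle$ (or else $Q^\vee$, in which case one appeals to $\Trace(Q^\vee)=\Trace(Q)$). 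This is the exact dual of the identity $\Trans_{A,A}=\Phi^{-1}\circ Q^\vee\circ\Phi$ established in the proof of Theorem~\ref{thm:piv-Tr-S}(a), and it yields $\nu(\reg_C^\vee)=\Trace(Q)$.

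For part (b), I would reduce to the algebra statement already proved. For finite-dimensional $C$ the coalgebra $C$ is co-Frobenius if and only if the dual algebra $C^\vee$ is a Frobenius algebra; moreover, under the strict duality preserving isomorphism $\fdCom(C)\cong\fdMod(C^\vee)$ the comodule $\reg_C$ corresponds to the left $C^\vee$-module $(C^\vee)^\vee$, which is isomorphic to the left regular module $\reg_{C^\vee}$ precisely when $C^\vee$ is Frobenius. Hence, if $C$ is co-Frobenius, then $\reg_C\cong\reg_{C^\vee}$ in $\fdCom(C)=\fdMod(C^\vee)$, where $\reg_{C^\vee}$ is the regular module of the dual pivotal algebra $C^\vee=(C^\vee,S^\vee,\gamma)$. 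Applying Theorem~\ref{thm:piv-Tr-S}(b) to the Frobenius algebra $C^\vee$ gives $\reg_{C^\vee}\cong\reg_{C^\vee}^\vee$, and applying the duality functor to $\reg_C\cong\reg_{C^\vee}$ gives $\reg_C^\vee\cong\reg_{C^\vee}^\vee$; chaining these isomorphisms yields $\reg_C\cong\reg_C^\vee$ as right $C$-comodules, whence $\nu(\reg_C)=\nu(\reg_C^\vee)$ by Proposition~\ref{prop:FS-ind-basic}(a). Alternatively one can dualize the proof of Theorem~\ref{thm:piv-Tr-S}(b) directly, building a comodule isomorphism $\reg_C\to\reg_C^\vee$ out of the co-Frobenius datum in the same way the map $\langle f(a),b\rangle=\phi(S(a)b)$ is built from a Frobenius functional.

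The step I expect to be the main obstacle is the Sweedler bookkeeping in part (a): confirming that the transported operator on $C$ is exactly $Q$ and not a variant differing by an extra application of $S$ or by a swap of tensor legs. The delicate points are how $\Phi_N$ interacts with the morphism-level duality $g\mapsto g^\vee$ on comodule maps, and the direction in which $j_{\reg_C}^{-1}$ is applied (equivalently, the appearance of the convolution inverse $\overline{\gamma}$, which one then absorbs using $S^2(c)=\langle\gamma,c_{(1)}\rangle c_{(2)}\langle\overline{\gamma},c_{(3)}\rangle$); getting the placement of $S$ and of the evaluation against $\gamma$ correct is where the care is required. Part (b) is then routine once the identification ``co-Frobenius $\Leftrightarrow$ $C^\vee$ Frobenius, with $\reg_C\cong\reg_{C^\vee}$'' is in place.
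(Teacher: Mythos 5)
Your proposal is correct, and part (b) is essentially the paper's argument: the paper also passes to the Frobenius algebra $C^\vee$, invokes Theorem~\ref{thm:piv-Tr-S}(b), and chains the resulting isomorphisms (explicitly as $\reg_C \xrightarrow{j} \reg_C^{\vee\vee} \xrightarrow{S^{\vee\vee}} \reg_{C^\vee}^\vee \cong \reg_{C^\vee} \xrightarrow{S^\vee} \reg_C^\vee$). Part (a), however, takes a genuinely different route. The paper first shows that $S^\vee: \reg_{C^\vee} \to \reg_C^\vee$ is an isomorphism of left $C^\vee$-modules (a two-line Sweedler check), then simply cites Theorem~\ref{thm:piv-Tr-S}(a) for the pivotal algebra $(C^\vee, S^\vee, \gamma)$ to get $\nu(\reg_C^\vee) = \Trace(Q')$ with $Q'(\lambda) = S^\vee(\lambda)\star\gamma$, and finishes by observing $Q' = Q^\vee$. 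You instead stay on the coalgebra side, using Proposition~\ref{prop:FS-ind-dual} together with the cofreeness isomorphism $\Hom_C(N,\reg_C)\cong N^\vee$. This works: carrying out your transport, the operator induced on $C$ comes out as $R(c) = \langle\overline{\gamma}, c_{(1)}\rangle\, S(c_{(2)})$ rather than $Q(c) = S(c_{(1)})\langle\gamma, c_{(2)}\rangle$; writing $U(c) = c_{(1)}\langle\gamma, c_{(2)}\rangle$ one has $Q = S\circ U$ and $R = U\circ S$, so $\Trace(R) = \Trace(Q)$ by cyclicity of the trace --- exactly the kind of discrepancy you flagged, and it resolves as you anticipated. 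The trade-off is that the paper's isomorphism $S^\vee$ is reused verbatim in part (b), making its proof of (a) do double duty, whereas your route for (a) is self-contained but leaves you still needing some such isomorphism for (b): your assertion that $\reg_C$ ``corresponds to'' $(C^\vee)^\vee$ under \eqref{eq:com-to-mod} is only true up to a non-identity isomorphism (the actions $\lambda\rightharpoonup c = c_{(1)}\langle\lambda,c_{(2)}\rangle$ and $\lambda\cdot c = \langle\lambda,S(c_{(1)})\rangle c_{(2)}$ do not agree on the nose), so you would still have to exhibit a map such as $S^\vee$ (or its double dual composed with $j$) to justify that step; this is a small but real omission rather than an error.
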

\begin{proof}
  (a) Write $A = C^\vee$ and regard the right $C$-comodule $\reg_C^\vee$ as a left $A$-module via~\eqref{eq:com-to-mod}. To avoid confusion, we denote by $\rightharpoonup$ the action of $A$ on $\reg_A$ and by $\rightharpoondown$ that on $\reg_C^\vee$. Since the coaction of $\mu \in \reg_C^\vee$ is characterized as $\langle \mu_{(0)}, c \rangle \mu_{(1)} = \langle \mu, c_{(1)} \rangle S(c_{(2)})$, the action $\rightharpoondown: A \times \reg_C^\vee \to \reg_C^\vee$ is given by
  \begin{equation*}
    \langle f \rightharpoondown \mu, c \rangle
    = \langle f, S(c_{(2)}) \rangle \langle \mu, c_{(1)} \rangle
    \quad (f \in A, \mu \in \reg_C^\vee, c \in C)
  \end{equation*}
  Consider the map $S^\vee: \reg_A \to \reg_C^\vee$. For $\lambda \in A$, $f \in \reg_A$ and $c \in C$, we have
  \begin{gather*}
    \langle S^\vee(\lambda \rightharpoonup f), c \rangle
    = \langle \lambda \star f, S(c) \rangle
    = \langle \lambda, S(c_{(2)}) \rangle \langle f, S(c_{(1)}) \rangle \\
    = \langle \lambda, S(c_{(2)}) \rangle \langle S^\vee(f), c_{(1)} \rangle
    = \langle \lambda \rightharpoondown S^\vee(f), c \rangle
  \end{gather*}
  and therefore $S^\vee: \reg_A \to \reg_C^\vee$ is an isomorphism of $A$-modules. Applying Theorem~\ref{thm:piv-Tr-S} to $A = C^\vee$, we see that $\nu(\reg_C^\vee) = \nu(\reg_A)$ is equal to the trace of the map $Q': A \to A$, $\lambda \mapsto S^\vee(\lambda) \star \gamma$ ($\lambda \in C^\vee$). Since $\langle Q'(\lambda), c \rangle = \langle \lambda, S(c_{(1)}) \rangle \langle \gamma, c_{(2)} \rangle = \langle \lambda, Q(c) \rangle$, $Q'$ is the dual map of $Q$. Hence, we have $\nu(\reg_C^\vee) = \Trace(Q') = \Trace(Q)$.

  (b) If $C$ is co-Frobenius, then $A$ is Frobenius. Thus, by Theorem~\ref{thm:piv-Tr-S}, there is an isomorphism $\varphi: \reg_A \to \reg_A^\vee$ of $A$-modules. In the proof of (1), we see that $S^\vee: \reg_A \to \reg_C^\vee$ is an isomorphism of $A$-modules. Regarding them as isomorphisms in the category $\fdCom(C)$, we obtain an isomorphism
  \begin{equation*}
    \begin{CD}
      \reg_C @>{j}>> \reg_C^{\vee\vee} @>{S^{\vee\vee}}>> \reg_A^\vee @>{\varphi}>> \reg_A @>{S^\vee}>> \reg_C^\vee
    \end{CD}
  \end{equation*}
  of right $C$-comodules.
\end{proof}

The following is a coalgebraic version of Theorem~\ref{thm:piv-Tr-Sv}.

\begin{theorem}
  \label{thm:copiv-Tr-Sv}
  Let $V \in \fdCom(C)$ be an absolutely simple comodule and suppose that $V$ is self-dual. Then, by \eqref{eq:dual-comodule}, the map
  \begin{equation*}
    S_V: C_V \to C_V, \quad S_V(c) = S(c) \quad (c \in C)
  \end{equation*}
  is well-defined. We also define
  \begin{equation*}
    Q_V: C_V \to C_V, \quad Q_V(c) = S(\gamma \rightharpoonup c)
    \quad (= S(c_{(1)}) \gamma(c_{(2)})) \quad (c \in C)
  \end{equation*}
  Then we have:
  \begin{equation*}
    \mathrm{(1)} \Trace(S_V)  = \nu(V) \cdot \gamma(t_V) \qquad
    \mathrm{(2)} \Trace(Q_V) = \nu(V) \cdot \dim_k(V)
  \end{equation*}
\end{theorem}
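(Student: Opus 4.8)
The plan is to reduce both formulas to the pivotal-algebra Theorems~\ref{thm:piv-Tr-S} and~\ref{thm:piv-Tr-Sv} by dualization, following the strategy used elsewhere in this section. Since $V$ is self-dual, \eqref{eq:dual-comodule} gives $S(C_V) = C_{V^\vee} = C_V$, so $D := C_V$ is a finite-dimensional subcoalgebra satisfying~\eqref{eq:copiv-subcoalg}; hence $D = (C_V, S|_{C_V}, \gamma|_{C_V})$ is a copivotal coalgebra whose dual pivotal algebra is $D^\vee = (C_V^\vee, S_D^\vee, g_D)$, where $S_D^\vee := (S|_{C_V})^\vee$ and $g_D := \gamma|_{C_V} \in C_V^\vee$ is its pivotal element. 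I would first record that $F_D : \fdMod(D^\vee) \to \fdCom(C)$ is a $k$-linear fully faithful strict duality preserving functor, hence preserves the FS indicator by Proposition~\ref{prop:FS-ind-inv}; writing $V_0 \in \fdMod(D^\vee)$ for the object with $F_D(V_0) = V$, we get $\nu(V_0) = \nu(V)$, and $V_0$ is absolutely simple and (since $F_D$ is duality preserving and fully faithful) self-dual. Because $V_0$ is absolutely simple, $\dim_k C_V = (\dim_k V_0)^2$, so the algebra map $\rho_{V_0} : D^\vee \to \End_k(V_0)$, which is surjective by~\eqref{eq:abs-simp-surj}, is an isomorphism; through it $D^\vee$ is identified with the pivotal algebra $(\End_k(V_0), S_{V_0}, \rho_{V_0}(g_D))$ of Theorem~\ref{thm:piv-Tr-Sv}.

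For (1): the map $S_V : C_V \to C_V$ is literally $S|_{C_V}$, well defined because $C_V$ is a subcoalgebra with $S(C_V) = C_V$, and its transpose on $C_V^\vee$ is $S_D^\vee$, which under the identification above is exactly the map $S_{V_0}$ of Theorem~\ref{thm:piv-Tr-Sv}(a). Using $\Trace(f) = \Trace(f^\vee)$ and that theorem,
\begin{equation*}
  \Trace(S_V) = \Trace(S_D^\vee) = \Trace(S_{V_0}) = \nu(V_0)\,\chi_{V_0}(g_D).
\end{equation*}
I would then conclude with $\nu(V_0) = \nu(V)$ and $\chi_{V_0}(g_D) = \langle \gamma|_{C_V}, t_{V_0}\rangle = \gamma(t_V)$, using~\eqref{eq:coalg-character} and the fact that $V_0$ and $V$ carry the same coaction, so $t_{V_0} = t_V$.

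For (2): the map $Q_V : C_V \to C_V$, $Q_V(c) = S(c_{(1)})\gamma(c_{(2)})$, is well defined for the same reason, and a short Sweedler-notation computation shows its transpose on $C_V^\vee$ is $\lambda \mapsto S_D^\vee(\lambda) \star \gamma|_{C_V}$, which is precisely the map $Q_{V_0}$ of Theorem~\ref{thm:piv-Tr-Sv}(b) read through $\rho_{V_0}$ (equivalently, it is the map $Q$ of Theorem~\ref{thm:piv-Tr-S}(a) for $D^\vee$). Hence $\Trace(Q_V) = \Trace(Q_{V_0}) = \nu(V_0)\dim_k(V_0) = \nu(V)\dim_k(V)$; alternatively one may invoke Theorem~\ref{thm:piv-Tr-S}(a) together with $\reg_{D^\vee} \cong V_0^{\oplus \dim_k V}$ and Proposition~\ref{prop:FS-ind-basic}(c).

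The substantive part is entirely bookkeeping: verifying that $S_V$ and $Q_V$ really land in $C_V$, and — the point to be careful about — that under the isomorphism $D^\vee \cong \End_k(V_0)$ the antipode, pivotal element and character of $D^\vee$ are matched with the data $S_{V_0}$, $\rho_{V_0}(g_D)$, $\chi_{V_0}$ used in Theorems~\ref{thm:piv-Tr-S} and~\ref{thm:piv-Tr-Sv}. I expect the main pitfall to be keeping the dualization conventions consistent throughout — trace of a map versus trace of its transpose, and the chain of identifications $C_V^\vee \cong \coEnd(V_0)^\vee \cong \End_k(V_0)$ — since each individual step is routine but they must all be made compatibly.
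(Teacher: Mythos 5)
Your proposal is correct and follows essentially the same route as the paper: reduce to the dual pivotal algebra $(C_V)^\vee \cong \End_k(V)$ and invoke the pivotal-algebra results, with part (1) being exactly the paper's argument via Theorem~\ref{thm:piv-Tr-Sv}(a) and \eqref{eq:coalg-character}. For part (2) the paper instead applies the coalgebraic Theorem~\ref{thm:copiv-Tr-S} to $C_V$ viewed as a comodule over itself (using that $\coEnd(V)$ is co-Frobenius and $C_V \cong V^{\oplus d}$), but this is just the dualization of the computation you carry out on $(C_V)^\vee$, so the two arguments coincide in substance.
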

\begin{proof}
  (1) We regard $V$ as a left $(C_V)^\vee$-module and denote its character by $\chi_V$. Applying Theorem~\ref{thm:piv-Tr-Sv} to the dual pivotal algebra $(C_V)^\vee$ and by using \eqref{eq:coalg-character}, we have $\Trace(S_V) = \nu(V) \cdot \chi_V(\gamma) = \nu(V) \cdot \gamma(t_V)$.

  (2) We regard $C_V$ as a right $C_V$-comodule. Since $C_V \cong \coEnd(V)$ is co-Frobenius, by Theorem~\ref{thm:copiv-Tr-S}, we have $\nu(C_V) = \Trace(Q_V)$. Let $d = \dim_k(V)$. Since $C_V \cong V^{\oplus d}$ as a right $C$-comodule, we have $\nu(C_V) = \nu(V) d$. Hence, $\Trace(Q_V) = \nu(V) d$.
\end{proof}

Applying Corollary~\ref{cor:Tr-S} to the dual pivotal algebra of $C$, we have:

\begin{corollary}
  \label{cor:Tr-S-copiv}
  Suppose that $k$ is algebraically closed and that $C = (C, S, \gamma)$ is a finite-dimensional cosemisimple copivotal coalgebra. Let $\{ V_i \}_{i = 1, \dotsc, n}$ be a complete set of representatives of the isomorphism classes of simple right $C$-comodules. Then
  \begin{equation*}
    \Trace(S) = \sum_{i = 1}^n \nu(V_i) \gamma(t_i)
  \end{equation*}
  where $t_i = t_{V_i}$ is the character of $V_i$.
\end{corollary}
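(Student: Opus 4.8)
The plan is to deduce the identity from Corollary~\ref{cor:Tr-S} applied to the dual pivotal algebra $A := C^\vee = (C^\vee, S^\vee, \gamma)$ of $C$. First I would verify that $A$ meets the hypotheses of Corollary~\ref{cor:Tr-S}: since $C$ is finite-dimensional, $A$ is a finite-dimensional algebra, and the dual of a finite-dimensional cosemisimple coalgebra is a semisimple algebra, so $A$ is a finite-dimensional semisimple pivotal algebra over the algebraically closed field $k$. Moreover, as recorded in the discussion after the definition of copivotal coalgebras, the functor \eqref{eq:com-to-mod} is an isomorphism of categories when $\dim_k C < \infty$ and it is a strict (hence strong) duality preserving functor; it therefore identifies $\fdCom(C)$ with $\fdMod(A)$ as Abelian categories with strong duality over $k$, sends the complete set $\{V_i\}$ of simple right $C$-comodules bijectively to a complete set of simple left $A$-modules, and, by Proposition~\ref{prop:FS-ind-inv}, leaves $\nu(V_i)$ unchanged.

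Next, Corollary~\ref{cor:Tr-S} applied to $A$ gives
\[
  \Trace(S^\vee) = \sum_{i=1}^n \nu(V_i)\,\chi_i(\gamma),
\]
where $\chi_i$ is the character of $V_i$ viewed as a left $A$-module and $\gamma \in C^\vee$ is the element playing the role of the distinguished grouplike element of $A$. I would then translate both sides back to $C$: the left-hand side equals $\Trace(S)$ because $S^\vee$ is the transpose of the linear endomorphism $S$ of the finite-dimensional space $C$, and a linear map and its transpose have equal trace; the right-hand side equals $\sum_i \nu(V_i)\,\gamma(t_i)$ by the identity \eqref{eq:coalg-character}, which states $\chi_V(\lambda) = \lambda(t_V)$ for all $\lambda \in C^\vee$, specialised to $\lambda = \gamma$ and $V = V_i$. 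Combining these two reductions yields the claimed formula $\Trace(S) = \sum_{i=1}^n \nu(V_i)\,\gamma(t_i)$.

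No genuine obstacle arises here; the only care needed is bookkeeping. One must check that the "$g$" of Corollary~\ref{cor:Tr-S} for the pivotal algebra $C^\vee$ is indeed $\gamma$, that "the character $t_{V_i}$" of a comodule matches the $t_V$ appearing in \eqref{eq:coalg-character}, and that the isomorphism \eqref{eq:com-to-mod} is genuinely strict duality preserving so that Proposition~\ref{prop:FS-ind-inv} applies verbatim — all of which were established earlier. No new constructions or estimates are required.
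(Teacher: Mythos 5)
Your proposal is correct and follows exactly the paper's route: the paper likewise obtains this corollary by applying Corollary~\ref{cor:Tr-S} to the dual pivotal algebra $C^\vee = (C^\vee, S^\vee, \gamma)$, with the identifications $\Trace(S^\vee) = \Trace(S)$ and $\chi_i(\gamma) = \gamma(t_i)$ via \eqref{eq:coalg-character} that you spell out. The bookkeeping you flag (semisimplicity of $C^\vee$, the duality-preserving category isomorphism \eqref{eq:com-to-mod}) is exactly what the paper relies on implicitly.
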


\subsection{Coseparable Copivotal Coalgebras}

A coalgebra $C$ is said to be {\em coseparable} if it has a {\em coseparability idempotent}, {\em i.e.}, a bilinear form $\lambda: C \times C \to k$ satisfying $c_{(1)} \lambda(c_{(2)}, d) = \lambda(c, d_{(1)}) d_{(2)}$ and $\lambda(c_{(1)}, c_{(2)}) = \varepsilon(c)$ for all $c, d \in C$. If such a form exists, then the forgetful functor $\Com(C) \to \Vect(k)$ is separable with section $\Pi_{V,W}: \Hom_k(V, W) \to \Hom_C(V, W)$ given by
\begin{equation*}
  \begin{CD}
    \Pi_{V,W}(f):
    V @>{\rho_V}>> V \otimes C
    @>{f \otimes \id_V}>> W \otimes C
    @>{\rho_W}>> W \otimes C \otimes C
    @>{\id_W \otimes \lambda}>> W
  \end{CD}
\end{equation*}
for $f \in \Hom_k(V, W)$. The following theorem can be proved by the arguments of \S\ref{subsec:sep-functor}. Nevertheless, to avoid notational difficulties, we do not use $\Pi$ and prove the theorem by reducing to Theorem \ref{thm:piv-FS-ind-ch}.

\begin{theorem}
  \label{thm:copiv-FS-ind-ch}
  If $C = (C, S, \gamma)$ is a coseparable copivotal coalgebra with coseparability idempotent $\lambda$, then, for all $V \in \fdCom(C)$, we have
  \begin{equation*}
    \nu(V)
    = \lambda(S(\gamma \rightharpoonup t_{V(1)}), t_{V(2)})
    \quad \Big( = \lambda(S(t_{V(1)}), t_{V(3)}) \gamma(t_{V(2)}) \Big)
  \end{equation*}
\end{theorem}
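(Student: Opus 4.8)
The plan is to reduce to the algebra case, Theorem~\ref{thm:piv-FS-ind-ch}, following the scheme already used for Theorems~\ref{thm:copiv-Tr-S} and~\ref{thm:copiv-Tr-Sv}. First I would fix a finite-dimensional subcoalgebra $D \subseteq C$ satisfying~\eqref{eq:copiv-subcoalg}, so that $C_V \subseteq D$ and $S(D) \subseteq D$; such a $D$ exists by the remark following~\eqref{eq:copiv-subcoalg}. Then $D = (D, S|_D, \gamma|_D)$ is again a copivotal coalgebra, and the $k$-linear, fully faithful, strict duality preserving functor $F_D : \fdMod(D^\vee) \to \fdCom(C)$ introduced above preserves the FS indicator by Proposition~\ref{prop:FS-ind-inv}. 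Writing $V_0 \in \fdMod(D^\vee)$ for the module with $F_D(V_0) = V$, we get $\nu(V) = \nu(V_0)$; moreover the matrix corepresentation and the character $t_V$ of $V$ lie in $D$, so by~\eqref{eq:coalg-character} applied to $D$ the character of $V_0$ as a $D^\vee$-module is $\chi_{V_0}(\mu) = \langle \mu, t_V \rangle$.

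Next I would observe that $\lambda|_{D \times D}$ is a coseparability idempotent of $D$ -- immediate since $\Delta(D) \subseteq D \otimes D$ -- and translate it into a separability idempotent of the dual pivotal algebra $D^\vee = (D^\vee, S^\vee, \gamma|_D)$. Concretely, fixing a basis $\{ d_i \}$ of $D$ with dual basis $\{ d^i \}$, the element $E = \sum_{i,j} \lambda(d_i, d_j)\, d^i \otimes d^j \in D^\vee \otimes D^\vee$ is the image of $\lambda|_{D \times D}$ under the canonical isomorphism $(D \otimes D)^\vee \cong D^\vee \otimes D^\vee$, and pairing with arbitrary elements of $D$ turns the two defining identities of a separability idempotent, $E^1 E^2 = \varepsilon$ and $a E^1 \otimes E^2 = E^1 \otimes E^2 a$ for $a \in D^\vee$, exactly into the two coseparability identities $\lambda(c_{(1)}, c_{(2)}) = \varepsilon(c)$ and $c_{(1)} \lambda(c_{(2)}, d) = \lambda(c, d_{(1)}) d_{(2)}$ restricted to $D$. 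Thus $D^\vee$ is separable with separability idempotent $E$.

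Then I would apply Theorem~\ref{thm:piv-FS-ind-ch} to $D^\vee$ and $E$, which gives $\nu(V) = \nu(V_0) = \chi_{V_0}\!\left( S^\vee(E^1) \star \gamma|_D \star E^2 \right)$, where $\star$ denotes the convolution product of $D^\vee$. Expanding with the explicit form of $E$, using $\langle \mu_1 \star \mu_2 \star \mu_3, c \rangle = \langle \mu_1, c_{(1)} \rangle \langle \mu_2, c_{(2)} \rangle \langle \mu_3, c_{(3)} \rangle$ and $\langle S^\vee(d^i), x \rangle = \langle d^i, S(x) \rangle$, and then collapsing the double sum via $\sum_i \langle d^i, x \rangle d_i = x$ for $x \in D$ (noting that $S(t_{V(1)})$ and $t_{V(3)}$ both lie in $D$), one obtains $\nu(V) = \lambda(S(t_{V(1)}), t_{V(3)}) \, \gamma(t_{V(2)})$, which is the parenthesized formula in the statement. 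A final application of coassociativity and the identity $\gamma \rightharpoonup c = c_{(1)} \gamma(c_{(2)})$ rewrites this as $\lambda(S(\gamma \rightharpoonup t_{V(1)}), t_{V(2)})$, the first formula.

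I do not anticipate a genuine obstacle. The reduction machinery ($F_D$, Proposition~\ref{prop:FS-ind-inv}) is already in place from the preceding subsections, and the only mildly delicate point is the dictionary between coseparability idempotents of the finite-dimensional coalgebra $D$ (bilinear forms on $D$) and separability idempotents of the dual algebra $D^\vee$ (tensors in $D^\vee \otimes D^\vee$): one must check that the two pairs of axioms correspond under $(D \otimes D)^\vee \cong D^\vee \otimes D^\vee$, which is a short finite-dimensional linear-algebra computation. After that, only a careful but routine Sweedler calculation remains, and it produces both of the stated expressions at once. As remarked before the theorem, one could alternatively run the argument of \S\ref{subsec:sep-functor} directly with the section $\Pi$ of the coseparable forgetful functor, but reducing to Theorem~\ref{thm:piv-FS-ind-ch} avoids the notational overhead, and that is the route I would take.
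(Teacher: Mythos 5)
Your proposal is correct and follows essentially the same route as the paper's proof: restrict to a finite-dimensional subcoalgebra $D$ satisfying~\eqref{eq:copiv-subcoalg}, transport $\lambda|_{D\times D}$ to a separability idempotent of the dual pivotal algebra $D^\vee$ (the paper writes it as $\lambda_D(x,y)=\sum_i\lambda_i'(x)\lambda_i''(y)$, which is your basis expansion in different notation), apply Theorem~\ref{thm:piv-FS-ind-ch}, and translate back via~\eqref{eq:coalg-character}. No substantive difference.
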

\begin{proof}
  Fix a subcoalgebra $D$ of $C$ satisfying~\eqref{eq:copiv-subcoalg}. $D$ is coseparable with $\lambda_D = \lambda|_{D \times D}$. Since $D$ is finite-dimensional, there exist finite number of linear maps $\lambda_i', \lambda_i'': D \to k$ such that
  \begin{equation*}
    \lambda_D(x, y)
    = \sum_{i} \lambda_i'(x) \lambda_i''(y)
  \end{equation*}
  for all $x, y \in D$. It is easy to see that $E = \sum_{i} \lambda_i' \otimes \lambda_i''$ is a separability idempotent for the dual pivotal algebra $D^\vee$. Now we regard $V$ as a left $D^\vee$-module and denote its character by $\chi_V$. Applying Theorem \ref{thm:piv-FS-ind-ch} to $D$, we obtain
  \begin{equation*}
    \nu(V) = \sum_i \chi_V (S_X^\vee(\lambda_i') \star \gamma \star \lambda_i'')
  \end{equation*}
  Now the desired formula is obtained by using~\eqref{eq:coalg-character}.
\end{proof}

A {\em copivotal Hopf algebra} is a Hopf algebra $H = (H, \Delta, \varepsilon, S)$ equipped with an algebra map $\gamma: H \to k$ satisfying $S^2(x) = \langle \gamma, x_{(1)} \rangle x_{(2)} \langle \gamma, S(x_{(3)}) \rangle$ for all $x \in H$. Since $\gamma$ is an algebra map, $\gamma \circ S$ is the inverse of $\gamma$ with respect to the convolution product. Therefore a copivotal Hopf algebra is a copivotal coalgebra.

A {\em Haar functional} of a Hopf algebra $H$ is a linear map $\lambda: H \to k$ satisfying
$\langle \lambda, 1 \rangle = 1$ and
$\langle \lambda, x_{(1)} \rangle x_{(2)} = \varepsilon(x) 1 = x_{(1)} \langle \lambda, x_{(2)} \rangle$
for all $x \in H$. If $\lambda$ is a Haar functional of $H$, then the map
\begin{equation*}
  \tilde{\lambda}: H \times H \to k,
  \quad \tilde{\lambda}(x, y) = \langle \lambda, S(x) y \rangle
  \quad (x, y \in H)
\end{equation*}
is a coseparability idempotent of the coalgebra $H$. Note that we have
\begin{equation*}
  S^2(x_{(1)}) \langle \gamma, x_{(2)} \rangle
  = \langle \gamma, x_{(1)} \rangle x_{(2)} \langle \gamma, S(x_{(3)}) \rangle \langle \gamma, x_{(4)} \rangle
  = \langle \gamma, x_{(1)} \rangle x_{(2)}
\end{equation*}
for all $x \in H$. The following corollary is a direct consequence of Theorem~\ref{thm:copiv-FS-ind-ch}:

\begin{corollary}
  \label{cor:FS-ind-Haar-int}
  Regard a copivotal Hopf algebra $H = (H, \Delta, \varepsilon, S; \gamma)$ as a copivotal coalgebra. If there exists a Haar functional $\lambda: H \to k$ on $H$, then we have
  \begin{equation*}
    \nu(V) = \langle \gamma, t_{V(1)} \rangle \langle \lambda, t_{V(2)} t_{V(3)} \rangle
  \end{equation*}
  for all $V \in \fdCom(H)$.
\end{corollary}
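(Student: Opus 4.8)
\emph{Proof proposal.} The plan is to invoke Theorem~\ref{thm:copiv-FS-ind-ch} directly. Indeed, the discussion preceding the corollary already establishes that, for a Haar functional $\lambda\colon H\to k$, the bilinear form $\tilde\lambda(x,y)=\langle\lambda,\,S(x)\,y\rangle$ is a coseparability idempotent of the coalgebra underlying $H$. Thus it only remains to specialize the formula of Theorem~\ref{thm:copiv-FS-ind-ch} to this particular coseparability idempotent and then simplify using that $H$ is a copivotal Hopf algebra.

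Concretely, first I would substitute $\tilde\lambda$ into the parenthetical form of the formula in Theorem~\ref{thm:copiv-FS-ind-ch}, namely $\nu(V)=\tilde\lambda\bigl(S(t_{V(1)}),\,t_{V(3)}\bigr)\,\gamma(t_{V(2)})$, and unwind the definition of $\tilde\lambda$. Since the outer $S$ coming from $\tilde\lambda$ composes with the inner $S$ already present, this gives
\begin{equation*}
  \nu(V)=\bigl\langle\lambda,\,S^2(t_{V(1)})\,t_{V(3)}\bigr\rangle\,\langle\gamma,\,t_{V(2)}\rangle .
\end{equation*}
(One could equally well start from the first form $\nu(V)=\tilde\lambda\bigl(S(\gamma\rightharpoonup t_{V(1)}),\,t_{V(2)}\bigr)$ and unwind $\gamma\rightharpoonup(-)$; the two routes agree.)

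Next I would use the identity $S^2(x_{(1)})\,\langle\gamma,\,x_{(2)}\rangle=\langle\gamma,\,x_{(1)}\rangle\,x_{(2)}$, valid for all $x\in H$, that was derived just before the statement of the corollary (it is a consequence of $\overline\gamma=\gamma\circ S$ together with $S^2(x)=\langle\gamma,x_{(1)}\rangle x_{(2)}\langle\overline\gamma,x_{(3)}\rangle$). Applying it to the first two Sweedler legs of $t_V$ inside the displayed expression — legitimate by coassociativity, writing $\Delta^{(2)}(t_V)=(\Delta\otimes\id)\Delta(t_V)$ and using the identity on the $\Delta(t_V)$-factor — replaces $S^2(t_{V(1)})\,\langle\gamma,\,t_{V(2)}\rangle$ by $\langle\gamma,\,t_{V(1)}\rangle\,t_{V(2)}$, with the remaining leg becoming the old third leg. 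This yields
\begin{equation*}
  \nu(V)=\langle\gamma,\,t_{V(1)}\rangle\,\bigl\langle\lambda,\,t_{V(2)}\,t_{V(3)}\bigr\rangle ,
\end{equation*}
which is the asserted formula. The argument is short, and no finiteness hypothesis on $H$ is needed since Theorem~\ref{thm:copiv-FS-ind-ch} already covers possibly infinite-dimensional coseparable copivotal coalgebras. The only step requiring a moment of care is the last one: one must make sure the two-leg identity for $S^2$ and $\gamma$ is applied to the correct tensor factors of $\Delta^{(2)}(t_V)$, which is precisely where the Sweedler bookkeeping could go wrong.
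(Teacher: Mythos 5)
Your proposal is correct and is essentially the paper's own argument: the paper notes that $\tilde\lambda$ is a coseparability idempotent and that $S^2(x_{(1)})\langle\gamma,x_{(2)}\rangle=\langle\gamma,x_{(1)}\rangle x_{(2)}$, and then declares the corollary a direct consequence of Theorem~\ref{thm:copiv-FS-ind-ch}; you have simply written out that specialization, with the Sweedler bookkeeping done correctly.
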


We shall explain how can we obtain~\eqref{eq:FS-formula-cpt} from Corollary~\ref{cor:FS-ind-Haar-int}.

\begin{example}
  We work over $\mathbb{C}$. Let $G$ be a compact group. A function $f: G \to \mathbb{C}$ is said to be {\em representative} if there exist finite number of functions $f_i, g_i: G \to \mathbb{C}$ such that $f(x y) = \sum_i f_i(x) g_i(y)$ for all $x, y \in G$. We denote by $R(G)$ the algebra of continuous representative functions on $G$. $R(G)$ is in fact a Hopf algebra; the comultiplication, the counit and the antipode are given by
  \begin{equation*}
    f_{(1)}(x) f_{(2)}(y)= f(x y),
    \quad \varepsilon(f) = f(1),
    \quad S(f)(x) = f(x^{-1})
  \end{equation*}
  for $f \in R(G)$, $x, y \in G$. Define $\lambda: R(G) \to \mathbb{C}$ by $\lambda(f) = \int_G f(x) d \mu(x)$, where $\mu$ is the normalized Haar measure on $G$. We see that $\lambda$ is a Haar functional of $R(G)$ (in fact, this is the origin of this term).

  The group $G$ acts continuously from the left on each $V \in \fdCom(R(G))$ by $x \cdot v = v_{(1)}(x) \cdot v_{(0)}$ ($x \in G, v \in V$). Conversely, if $V$ is a finite-dimensional continuous representation of $G$, then $R(G)$ coacts from the right on $V$. If we fix a basis $\{ v_i \}_{i = 1, \dotsc, n}$ of $V$, the coaction of $R(G)$ is described as follows: Define $f_{i j}: G \to \mathbb{C}$ by
  \begin{equation}
    \label{eq:FS-ind-cpt-grp-1}
    x \cdot v_i = \sum_{i = 1}^n f_{i j}(x) v_j \quad (x \in G, i = 1, \dotsc, n)
  \end{equation}
  Then each $f_{i j}$ is an element of $R(G)$. The coaction of $R(G)$ on $V$ is defined by
  \begin{equation*}
    V \to V \otimes R(G), \quad v_i \mapsto \sum_{j = 1}^n v_j \otimes f_{i j} \quad (i = 1, \dotsc, n)
  \end{equation*}
  These correspondences give an isomorphism of categories with duality over $\mathbb{C}$ between $\fdCom(R(G))$ and the category of continuous representations of $G$.

  Now let $V$ be a continuous representation of $G$ with character $\chi_V$. Regarding $V$ as a right $R(G)$-comodule via the above category isomorphism, we obtain $\nu(V) = \lambda(t_{V(1)} t_{V(2)})$ by Corollary~\ref{cor:FS-ind-Haar-int}. To compute this value, we fix a basis $\{ v_i \}_{i = 1, \dotsc, n}$ of $V$ and define $f_{i j}$ by \eqref{eq:FS-ind-cpt-grp-1}. Then $t_V = f_{1 1} + \dotsb + f_{n n}$. Hence, by~\eqref{eq:matrix-corep}, we compute
  \begin{align}
    \label{eq:FS-ind-cpt-grp-2}
    \nu(V)
    = \lambda(t_{V(1)} t_{V(2)})
    = \int_G^{} \left( \sum_{i, j = 1}^n f_{i j}(x)f_{ji}(x) \right) d \mu(x)
  \end{align}
  Since the action of $x \in G$ is represented by $\rho(x) = (f_{i j}(x))_{i, j = 1, \dotsc, n}$, we have
  \begin{equation*}
    \chi_V(x^2)
    = \Trace \Big( \rho(x)^2 \Big)
    = \sum_{i, j = 1}^n f_{i j}(x)f_{ji}(x)
  \end{equation*}
  Substituting this to~\eqref{eq:FS-ind-cpt-grp-2}, we obtain~\eqref{eq:FS-formula-cpt}.
\end{example}

\section{Quantum $SL_2$}
\label{sec:quantum-sl_2}

\subsection{The Hopf Algebra $\mathcal{O}_q(SL_2)$}

In this section, we give some applications of our results to the quantum coordinate algebra $\mathcal{O}_q(SL_2)$ and the quantized universal enveloping algebra $U_q(\mathfrak{sl}_2)$. For details on these Hopf algebras, we refer the reader to \cite{MR1321145} and \cite{MR1492989}.

Throughout, the base field $k$ is assumed to be an algebraically closed field of characteristic zero. $q \in k$ denotes a fixed non-zero parameter that is not a root of unity. We use the following standard notations:
\begin{equation*}
  [n]_q = \frac{q^n - q^{-n}}{q - q^{-1}},
  \quad [n]_q! = [n]_q \cdot [n - 1]_q!
  \quad (n \ge 1),
  \quad [0]_q! = 1
\end{equation*}
for $n \in \mathbb{N}_0 = \{ 0, 1, 2, \dotsc \}$.

The quantum coordinate algebra $\mathcal{O}_q(SL_2)$ is a Hopf algebra defined as follows: As an algebra, it is generated by $a$, $b$, $c$ and $d$ with relations
\begin{gather*}
  a b = q b a, \quad a c = q c a, \quad b d = q d b, \quad c d = q d c, \quad b c = c b \\
  a d - q b c = 1 = d a - q^{-1} b c
\end{gather*}
The comultiplication $\Delta$ and the counit $\varepsilon$ are defined by
\begin{gather*}
  \Delta(a) = a \otimes a + b \otimes c, \quad \Delta(b) = a \otimes b + b \otimes d,
  \quad \varepsilon(a) = 1, \quad \varepsilon(b) = 0 \\
  \Delta(c) = c \otimes a + d \otimes c, \quad \Delta(d) = c \otimes b + d \otimes d,
  \quad \varepsilon(c) = 0, \quad \varepsilon(d) = 1
\end{gather*}
and the antipode $S$ is given by
\begin{equation*}
  S(a) = d, \quad S(b) = -q^{-1} b, \quad S(c) = - q c, \quad S(d) = a
\end{equation*}
We define an algebra map $\gamma: \mathcal{O}_q(SL_2) \to k$ by
\begin{equation*}
  \gamma(a) = q^{-1}, \quad \gamma(b) = \gamma(c) = 0, \quad \gamma(d) = q
\end{equation*}
One can check that $\mathcal{O}_q(SL_2)$ is a copivotal Hopf algebra with $\gamma$. In what follows, we determine the FS indicator of simple $\mathcal{O}_q(SL_2)$-comodules.

For each $\ell \in \frac{1}{2}\mathbb{N}_0$, we put $I_\ell = \{ - \ell, - \ell + 1, \dotsc, \ell - 1, \ell \}$ and
\begin{equation*}
  X_\ell = \mathrm{span}_k \{ a^{\ell - i} b^{\ell + i} \mid i \in I_\ell \} \subset \mathcal{O}_q(SL_2)
\end{equation*}
$X_\ell$ is a right coideal and hence it is a right $\mathcal{O}_q(SL_2)$-comodule. It is known that each $X_\ell$ is simple and $\{ X_\ell \mid \ell \in \frac{1}{2}\mathbb{N}_0 \}$ is a complete set of representatives of the isomorphism classes of simple right $\mathcal{O}_q(SL_2)$-comodules. This implies, in particular, that $X_\ell$ is self-dual.

In this section, we first prove the following result:

\begin{theorem}
  \label{thm:FS-ind-Oq}
  $\nu(X_\ell) = (-1)^{2 \ell}$.
\end{theorem}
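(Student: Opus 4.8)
The plan is to apply the copivotal Frobenius--Schur theorem (Theorem~\ref{thm:copiv-FS}) to the $X_\ell$, using the explicit presentation of $\mathcal{O}_q(SL_2)$. Since $k$ is algebraically closed, each simple comodule $X_\ell$ is absolutely simple, and it is noted above that $X_\ell\cong X_\ell^\vee$; hence Theorem~\ref{thm:copiv-FS} already gives $\nu(X_\ell)\in\{+1,-1\}$, and, for the (unique up to scalar) nonzero bilinear form $b$ on $X_\ell$ satisfying~\eqref{eq:S-adjoint-bilin-coalg}, $\nu(X_\ell)=b(w,\gamma\rightharpoonup v)/b(v,w)$ for any $v,w$ with $b(v,w)\ne 0$. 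So everything reduces to exhibiting one convenient pair $(v,w)$ and computing the ratio.

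First I would compute $\gamma\rightharpoonup(-)$ on $X_\ell$. Writing $v_i=a^{\ell-i}b^{\ell+i}$ ($i\in I_\ell$) for the given basis, the coaction is the restriction of $\Delta$, so $\gamma\rightharpoonup v_i=(\mathrm{id}\otimes\gamma)\Delta(v_i)$; since $\gamma$ and $\Delta$ are algebra maps, $(\mathrm{id}\otimes\gamma)\Delta$ is the algebra endomorphism determined by $a\mapsto q^{-1}a$, $b\mapsto qb$, whence $\gamma\rightharpoonup v_i=q^{2i}v_i$. The same computation with $\overline{\gamma}=\gamma\circ S$ (so $a\mapsto qa$, $b\mapsto q^{-1}b$) and~\eqref{eq:dual-comodule} shows $\gamma$ acts on $X_\ell^\vee$ by $\gamma\rightharpoonup v^j=q^{-2j}v^j$. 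As $q$ is not a root of unity these eigenvalues are pairwise distinct, so any comodule morphism $X_\ell\to X_\ell^\vee$ is ``anti-diagonal'': $b(v_i,v_j)=\delta_{i+j,0}\,\beta_i$ for scalars $\beta_i\in k^\times$.

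Next I would extract the link with the antipode. Translating the comodule-morphism condition for $f_b\colon v_i\mapsto\beta_i v^{-i}$ into matrix coefficients (with $\rho_{X_\ell}(v_j)=\sum_i v_i\otimes c_{ij}$ and~\eqref{eq:dual-comodule}) gives $\beta_{-i}\,c_{-i,k}=\beta_k\,S(c_{-k,i})$ for all $i,k$, and specializing to $k=i=-\ell$ yields $S(c_{\ell,-\ell})=(\beta_\ell/\beta_{-\ell})\,c_{\ell,-\ell}$. Now $c_{\ell,-\ell}$ is the coefficient of $v_\ell=b^{2\ell}$ in $\rho(v_{-\ell})=\Delta(a^{2\ell})=(a\otimes a+b\otimes c)^{2\ell}$; since $a\otimes a$ and $b\otimes c$ commute up to the factor $q^2$, the unique term with first tensor factor $b^{2\ell}$ is $(b\otimes c)^{2\ell}=b^{2\ell}\otimes c^{2\ell}$ with coefficient $1$, so $c_{\ell,-\ell}=c^{2\ell}$. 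Hence $S(c^{2\ell})=S(c)^{2\ell}=(-qc)^{2\ell}=(-1)^{2\ell}q^{2\ell}c^{2\ell}$, giving $\beta_\ell/\beta_{-\ell}=(-1)^{2\ell}q^{2\ell}$. Putting this into Theorem~\ref{thm:copiv-FS} with $v=v_{-\ell}$, $w=v_\ell$ (note $b(v_{-\ell},v_\ell)=\beta_{-\ell}\ne0$),
\[
  \nu(X_\ell)=\frac{b(v_\ell,\gamma\rightharpoonup v_{-\ell})}{b(v_{-\ell},v_\ell)}
  =\frac{q^{-2\ell}\,b(v_\ell,v_{-\ell})}{b(v_{-\ell},v_\ell)}
  =q^{-2\ell}\,\frac{\beta_\ell}{\beta_{-\ell}}=q^{-2\ell}\cdot(-1)^{2\ell}q^{2\ell}=(-1)^{2\ell}.
\]

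I expect the only real work to be bookkeeping in the third step: aligning the conventions for the dual comodule, the matrix coefficients $c_{ij}$ and the transposition map so that the invariance condition reads exactly $\beta_{-i}c_{-i,k}=\beta_k S(c_{-k,i})$, and checking that $c_{\ell,-\ell}$ is precisely $c^{2\ell}$ and not a $q^2$-binomial multiple of it. One could instead avoid computing $c_{\ell,-\ell}$ and get $\beta_{-i}/\beta_i$ up to sign from the copivotal identity $S^2(c_{jk})=q^{2(j-k)}c_{jk}$, then fix the sign via the $\mathbb{Z}/2$-grading of $\mathcal{O}_q(SL_2)$ with $|a|=|d|=0$, $|b|=|c|=1$, on whose degree-$m$ part $S$ acts as $(-1)^m$ times a sign-free anti-automorphism; but working at the extreme weight $i=-\ell$ as above is shortest. (A cross-check: the same value drops out of Corollary~\ref{cor:FS-ind-Haar-int} together with $\gamma(c_{ij})=\delta_{ij}q^{2j}$ and the quantum Schur orthogonality relations for $\mathcal{O}_q(SL_2)$.)
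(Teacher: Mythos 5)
Your proof is correct, but it takes a genuinely different route from the paper. The paper computes $\nu(X_\ell)$ as $\Trace(Q_\ell)/(2\ell+1)$ via Theorem~\ref{thm:copiv-Tr-Sv}, which forces it to control $S(\gamma\rightharpoonup c^{(\ell)}_{i,-i})$ for \emph{every} anti-diagonal matrix coefficient; this is done by invoking the explicit description of the $c^{(\ell)}_{ij}$ in terms of little $q$-Jacobi polynomials and the invariance properties \eqref{eq:q-Jacobi-1}, \eqref{eq:q-Jacobi-2}. You instead go through Theorem~\ref{thm:copiv-FS}: the $\gamma$-weight decomposition (eigenvalues $q^{2i}$ on $X_\ell$, $q^{-2j}$ on $X_\ell^\vee$, pairwise distinct since $q$ is not a root of unity) pins the invariant form down to the anti-diagonal, and then the sign $\nu(X_\ell)=q^{-2\ell}\beta_\ell/\beta_{-\ell}$ is extracted from the single extreme coefficient $c_{\ell,-\ell}=c^{2\ell}$, whose image under $S$ is computed directly from the presentation. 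The trade-off is clear: your argument avoids the $q$-Jacobi machinery entirely and only touches one corner of the corepresentation matrix, at the cost of having to verify the intertwining identity $\beta_{-i}c_{-i,k}=\beta_k S(c_{-k,i})$ and the non-vanishing of $\beta_{\pm\ell}$ (which you correctly get from non-degeneracy via Schur's lemma); the paper's route is more uniform across all $i$ but leans on the known explicit formulas. Both are sound, and your weight-space argument in particular makes transparent why only the transposition on the lowest/highest weight line matters.
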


By Theorem~\ref{thm:copiv-FS}, this result reads as follows: For each $\ell \in \frac{1}{2}\mathbb{N}_0$, there exists a non-degenerate bilinear form $\beta$ on $X_\ell$ satisfying $\beta(x_{(0)}, y) x_{(1)} = \beta(x, y_{(0)}) S(y_{(1)})$ and $b(y, \gamma \rightharpoonup x) = (-1)^{2\ell} \cdot b(x, y)$ for all $x, y \in X_\ell$.

To prove Theorem~\ref{thm:FS-ind-Oq}, we need a matrix corepresentation of $X_\ell$. For each $i \in I_\ell$, we fix a square root $\mu_i \in k$ of $[\ell + i]_{q^{-2}}!$ and take
\begin{equation*}
  x_i^{(\ell)} = \left[ \begin{array}{c} 2 \ell \\ \ell + i \end{array} \right]^{1/2}_{q^{-2}}
  a^{\ell - i} b^{\ell + i} \quad (i \in I_\ell), \text{\quad where \quad}
  \left[ \begin{array}{c} 2 \ell \\ \ell + i \end{array} \right]^{1/2}_{q^{-2}}
  := \frac{\mu_{\ell}}{\mu_{i} \cdot \mu_{-i}}
\end{equation*}
as a basis of $X_\ell$. Define $c_{i j}^{(\ell)}$ by $\Delta(x_j^{(\ell)}) = \sum x_j^{(\ell)} \otimes c_{i j}^{(\ell)}$. The matrix $(c_{i j}^{(\ell)})$ has been explicitly determined and well-studied in relation to unitary representations of a real form of $\mathcal{O}_q(SL_2)$; see,  e.g., \cite[\S4]{MR1492989}. Following {\em loc. cit.}, we have
\begin{equation*}
  c_{i j}^{(\ell)} = \begin{cases}
    N_{i j \ell}^{+} \cdot a^{-i-j} c^{i-j} \cdot p_{\ell + j} (\zeta; q^{-2(i - j)}, q^{2(i + j)} | q^{-2})
    & (i + j \le 0, i \ge j) \\
    N_{j i \ell}^{+} \cdot a^{-i-j} b^{j-i} \cdot p_{\ell + i} (\zeta; q^{-2(j - i)}, q^{2(i + j)} | q^{-2})
    & (i + j \le 0, i \le j) \\
    N_{j i \ell}^{-} \cdot p_{\ell - i} (\zeta; q^{-2(i - j)}, q^{2(i + j)} | q^{-2}) \cdot c^{i - j} d^{i + j}
    & (i + j \ge 0, i \ge j) \\
    N_{i j \ell}^{-} \cdot p_{\ell - j} (\zeta; q^{-2(i - j)}, q^{2(i + j)} | q^{-2}) \cdot b^{i - j} d^{i + j}
    & (i + j \ge 0, i \le j) \\
  \end{cases}
\end{equation*}
where $\zeta = - q b c$,
\begin{equation*}
  N_{i j \ell}^+ = \frac{q^{-(\ell + j)(j - i)}}{[i - j]_{q^{-2}}!}
  \cdot \frac{\mu_{+i} \, \mu_{- j}}{\mu_{+j} \, \mu_{-i}}, \quad
  N_{i j \ell}^- = \frac{q^{(\ell - j)(j - i)}}{[j - i]_{q^{-2}}!}
  \cdot \frac{\mu_{-i} \, \mu_{+j}}{\mu_{-j} \, \mu_{+i}}
  \quad (= N_{-i,-j, \ell}^+)
\end{equation*}
and $p_m$ is the {\em little $q$-Jacobi polynomial} \cite[\S2]{MR1492989}. We omit the definition of $p_m$; in what follows, we need only the fact that $p_m(\zeta; q_1, q_2 | q_3)$ ($q_i \in k$) is a polynomial of $\zeta$. Note that, since $S(\zeta) = \zeta$, we have
\begin{equation}
  \label{eq:q-Jacobi-1}
  S(p_m(\zeta; q_1, q_2 | q_3)) = p_m(S(\zeta); q_1, q_2 | q_3) = p_m(\zeta; q_1, q_2 | q_3)
\end{equation}
Since $f \mapsto (\gamma \rightharpoonup f)$ ($f \in \mathcal{O}_q(SL_2)$) is an algebra map, we also have
\begin{equation}
  \label{eq:q-Jacobi-2}
  \gamma \rightharpoonup p_m(\zeta; q_1, q_2 | q_3) = p_m(\gamma \rightharpoonup \zeta; q_1, q_2 | q_3) = p_m(\zeta; q_1, q_2 | q_3)
\end{equation}

\begin{proof}[Proof of Theorem~\ref{thm:FS-ind-Oq}]
  Let $C_\ell$ be the coefficient subcoalgebra of $X_\ell$. Define
  \begin{equation*}
    Q_\ell: C_\ell \to C_\ell
    \quad Q_\ell(f) = S(\gamma \rightharpoonup f)
    \quad (f \in C_\ell)
  \end{equation*}
  $Q_\ell$ is well-defined since $X_\ell$ is self-dual. By Theorem~\ref{thm:copiv-Tr-Sv}, we have
  \begin{equation*}
    \nu(X_\ell) = \frac{\Trace(Q_\ell)}{\dim_k(X_\ell)} = \frac{\Trace(Q_\ell)}{2 \ell + 1}
  \end{equation*}
  By~\eqref{eq:q-Jacobi-1}, \eqref{eq:q-Jacobi-2} and the above description of $c_{i j}^{(\ell)}$, we have
  \begin{equation*}
    Q_\ell(c_{i j}^{(\ell)})
    = S(\gamma \rightharpoonup c_{i j}^{(\ell)})
    = \text{(constant)} \times S(c_{i j}^{(\ell)})
    = \text{(constant)} \times c_{-j, -i}^{(\ell)}
  \end{equation*}
  for all $i, j \in I_\ell$. Recall that $\{ c_{i j}^{(\ell)} \}$ is a basis of $C_\ell$ since $X_\ell$ is simple. The above computation means that $Q_\ell$ is represented by a generalized permutation matrix with respect to this basis.

  Note that $(i, j) = (-j, -i)$ if and only if $j = -i$. If $i \ge 0$, then
  \begin{align*}
    Q_\ell(c_{i, -i}^{(\ell)})
    & = q^{-2i} \cdot S \Big( N_{i,-i,\ell}^{+} \cdot c^{2i} \cdot p_{\ell - i} (\zeta; q^{-4i}, 1 | q^{-2}) \Big) \\
    & = q^{-2i} \cdot N_{i,-i,\ell}^{+} \cdot p_{\ell - i} (\zeta; q^{-4i}, 1 | q^{-2}) \cdot (-q)^{2 i}
    = (-1)^{2i} \cdot c_{i, -i}^{(\ell)}
  \end{align*}
  Since $\ell - i \in \mathbb{Z}$, we have $(-1)^{2 i} = (-1)^{2 \ell}$. In a similar way, we also have $Q_\ell(c_{i,-i}^{(\ell)}) = (-1)^{2 \ell}$ for $i < 0$. Hence we obtain $\Trace(Q_\ell) = (-1)^{2 \ell} \cdot (2 \ell + 1)$.
\end{proof}

$\mathcal{O}_q(SL_2)$ has a Hopf algebra automorphism $\tau$ given by
\begin{equation*}
  \tau(a) = a, \quad \tau(b) = - b, \quad \tau(c) = - c, \quad \tau(d) = d
\end{equation*}
$\tau$ is an involution such that $\gamma \circ \tau = \gamma$ and hence the $\tau$-twisted FS indicator $\nu^\tau(X)$ is defined for each $X \in \fdCom(\mathcal{O}_q(SL_2))$. Replacing $S$ in the proof of Theorem~\ref{thm:FS-ind-Oq} with $S \circ \tau$, we have the following theorem:

\begin{theorem}
  \label{thm:tw-FS-ind-Oq}
  $\nu^\tau(X_\ell) = +1$.
\end{theorem}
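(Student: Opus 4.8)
The plan is to imitate the proof of Theorem~\ref{thm:FS-ind-Oq} almost verbatim, replacing the antipode $S$ by the twisted antipode $S \circ \tau$ throughout. Since $\tau$ is an involution of the Hopf algebra $\mathcal{O}_q(SL_2)$ with $\gamma \circ \tau = \gamma$, it is in particular an involution of the copivotal coalgebra $(\mathcal{O}_q(SL_2), S, \gamma)$, so Theorem~\ref{thm:copiv-Tr-Sv} applies to the copivotal coalgebra $\mathcal{O}_q(SL_2)^\tau = (\mathcal{O}_q(SL_2), S \circ \tau, \gamma)$. As in the untwisted case, $X_\ell$ remains self-dual with respect to the twisted duality (by Proposition~\ref{prop:FS-ind-abs-simple}, absolute simplicity forces $\nu^\tau(X_\ell) \in \{0, \pm 1\}$, and the twisted dual of $X_\ell$ is again one of the $X_m$, hence self-dual), so the map $Q_\ell^\tau: C_\ell \to C_\ell$, $Q_\ell^\tau(f) = S(\tau(\gamma \rightharpoonup f)) = S\tau(f)_{(1)} \gamma(f_{(2)})$ is well defined, and Theorem~\ref{thm:copiv-Tr-Sv} gives
\begin{equation*}
  \nu^\tau(X_\ell) = \frac{\Trace(Q_\ell^\tau)}{2\ell + 1}.
\end{equation*}

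Next I would compute $Q_\ell^\tau$ on the explicit basis $\{ c_{ij}^{(\ell)} \}$ of $C_\ell$. Because $\tau$ fixes $a$ and $d$ and negates $b$ and $c$, and because $\zeta = -qbc$ is $\tau$-invariant (being quadratic in $b,c$), the only change relative to the untwisted computation is a sign coming from the monomials $b^{i-j}$, $c^{i-j}$, $b^{j-i}$, $c^{j-i}$ appearing in $c_{ij}^{(\ell)}$. As before, the little $q$-Jacobi polynomial factors are fixed by both $\gamma \rightharpoonup (-)$ and $S\tau$ (using that $\gamma \rightharpoonup \zeta = \zeta$ and $S\tau(\zeta) = S(\zeta) = \zeta$), and $S\tau$ still sends $c_{ij}^{(\ell)}$ to a scalar multiple of $c_{-j,-i}^{(\ell)}$, so $Q_\ell^\tau$ is again a generalized permutation matrix and only the diagonal entries, indexed by $j = -i$, contribute to the trace. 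For $i \ge 0$ the relevant monomial is $c^{2i}$, and $S\tau(c^{2i}) = (S(-c))^{2i} = (qc)^{2i} = q^{2i} c^{2i}$, whereas in the untwisted case $S(c^{2i}) = (-qc)^{2i} = q^{2i} c^{2i}$ as well — so in fact the sign $(-1)^{2i}$ from the untwisted computation is replaced by $(+1)^{2i} = 1$. Carrying the same constant-factor bookkeeping as in the proof of Theorem~\ref{thm:FS-ind-Oq} (the $q$-power prefactors and the $N^{\pm}$ constants cancel exactly as there, since they are untouched by $\tau$), one gets $Q_\ell^\tau(c_{i,-i}^{(\ell)}) = c_{i,-i}^{(\ell)}$ for all $i \in I_\ell$, and likewise for $i < 0$.

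Summing over the $2\ell + 1$ diagonal entries yields $\Trace(Q_\ell^\tau) = 2\ell + 1$, hence $\nu^\tau(X_\ell) = +1$. The representation-theoretic reformulation then follows from Theorem~\ref{thm:copiv-FS} applied to $\mathcal{O}_q(SL_2)^\tau$: for every $\ell$ there is a non-degenerate bilinear form $\beta$ on $X_\ell$ with $\beta(x_{(0)}, y)\, x_{(1)} = \beta(x, y_{(0)})\, S\tau(y_{(1)})$, which is symmetric since $\nu^\tau(X_\ell) = +1$. The main point requiring care — and the only real obstacle — is verifying that the sign tracking in the monomials exactly cancels the sign that produced $(-1)^{2\ell}$ in the untwisted case; this amounts to checking that $\tau$ contributes a factor $(-1)^{i-j}$ on the off-diagonal entries (which do not affect the trace) and a factor $(-1)^{2i}$ on the diagonal entry $c_{i,-i}^{(\ell)}$, precisely offsetting the $(-1)^{2i}$ coming from $S$ alone. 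Everything else is identical to the proof of Theorem~\ref{thm:FS-ind-Oq}, so I would simply indicate ``replace $S$ by $S\circ\tau$ and track the extra signs'' rather than reproduce the full calculation.
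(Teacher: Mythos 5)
Your proposal is correct and follows exactly the paper's route: the paper's entire proof of this theorem is the single remark that one replaces $S$ by $S \circ \tau$ in the proof of Theorem~\ref{thm:FS-ind-Oq}, which is precisely what you carry out in detail (including the key point that $S\tau(c) = qc$ has no sign, so the factor $(-1)^{2i}$ on the diagonal entries $c^{(\ell)}_{i,-i}$ becomes $+1$). One small slip in your write-up: in the untwisted case $(-qc)^{2i} = (-1)^{2i} q^{2i} c^{2i}$, not $q^{2i}c^{2i}$ ``as well'' --- the exponent $2i$ is odd exactly when $\ell$ is a half-integer, which is where the two theorems differ --- but you immediately state the correct conclusion, so this does not affect the argument.
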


By Theorem~\ref{thm:copiv-FS}, this result reads as follows: For each $\ell \in \frac{1}{2}\mathbb{N}_0$, there exists a non-degenerate bilinear form $\beta$ on $X_\ell$ satisfying $\beta(x_{(0)}, y) \tau(x_{(1)}) = \beta(x, y_{(0)}) S(y_{(1)})$ and $\beta(w, \gamma \rightharpoonup v) = \beta(v, w)$ for all $v, w \in X_\ell$.

\begin{remark}
  \label{rem:FS-ind-Oq}
  The character $t_\ell$ of $X_\ell$ is given by
  \begin{equation*}
    t_\ell = \sum_{i \in I_\ell} c_{i i}^{(\ell)}
    = \sum_{i \in I_\ell, i \ge 0} a^{2 i} p_{\ell + i}(\zeta; 1, q^{-4i} | q^{-2})
    + \sum_{i \in I_\ell, i >   0} p_{\ell + i}(\zeta; 1, q^{4i} | q^{-2}) d^{2 i}
  \end{equation*}
  One can prove Theorems~\ref{thm:FS-ind-Oq} and \ref{thm:tw-FS-ind-Oq} by Theorem~\ref{thm:copiv-FS-ind-ch} and its corollary (see \cite[\S4]{MR1492989} for a description of the Haar functional on $\mathcal{O}_q(SL_2)$). However, the computation will become more difficult than the above proof.
\end{remark}

\subsection{The Hopf Algebra $U_q(\mathfrak{sl}_2)$}

The quantized enveloping algebra $U_q(\mathfrak{sl}_2)$ is a Hopf algebra defined as follows: As an algebra, it is generated by $E$, $F$, $K$ and $K^{-1}$ with relations $K K^{-1} = 1 = K^{-1} K$,
\begin{gather*}
  K E K^{-1} = q^2 E,
  \quad K F K^{-1} = q^{-2} F
  \text{\quad and \quad} E F - F E = \frac{K - K^{-1}}{q - q^{-1}}
\end{gather*}
The comultiplication $\Delta$, the counit $\varepsilon$ and the antipode $S$ are given by
\begin{gather*}
  \Delta(K) = K \otimes K,
  \quad \Delta(E) = E \otimes K + 1 \otimes E,
  \quad \Delta(F) = F \otimes 1 + K^{-1} \otimes F \\
  S(K) = K^{-1}, \ S(E) = - E K^{-1}, \ S(F) = - K F,
  \ \varepsilon(K) = 1,
  \ \varepsilon(E) = \varepsilon(F) = 0
\end{gather*}
We have $S^2(u) = K u K^{-1}$ for all $u \in U_q(\mathfrak{sl}_2)$. Hence the Hopf algebra $U_q(\mathfrak{sl}_2)$ is pivotal with pivotal grouplike element $K$.

For each $\ell \in \frac{1}{2} \mathbb{N}_0$, we define a left $U_q(\mathfrak{sl}_2)$-module $V_\ell$ as follows: As a vector space, it has a basis $\{ v_i \}_{i \in I_\ell}$. The action of $U_q(\mathfrak{sl}_2)$ on $V_\ell$ is defined by
\begin{equation*}
  K \cdot v_i = q^{2i} v_i,
  \ E \cdot v_i = [\ell - i + 1]_q v_{i - 1},
  \ F \cdot v_i = [\ell + i + 1]_q v_{i + 1}
  \ (i \in I_\ell)
\end{equation*}
where $v_{\ell+1} = v_{-(\ell+1)} = 0$.

There is a unique Hopf pairing $\langle -, - \rangle: U_q(\mathfrak{sl}_2) \times \mathcal{O}_q(SL_2) \to k$ such that
\begin{gather*}
  \langle K, a \rangle = q^{-1},
  \quad \langle K, d \rangle = q,
  \quad \langle E, c \rangle = 1,
  \quad \langle F, b \rangle = 1 \\
  \langle K, b \rangle = \langle K, c \rangle 
  = \langle E, a \rangle = \langle E, b \rangle = \langle E, d \rangle 
  = \langle F, a \rangle = \langle F, c \rangle = \langle F, d \rangle = 0
\end{gather*}
see \cite[\S4]{MR1492989} and \cite[V.7]{MR1321145}. This pairing induces an algebra map $\varphi: U_q(\mathfrak{sl}_2) \to \mathcal{O}_q(SL_2)^\vee$. Since $\varphi(K) = \gamma$, $\varphi$ is in fact a morphism of pivotal algebras. Hence we obtain a $k$-linear duality preserving functor
\begin{equation*}
  \begin{CD}
    \Phi: \fdCom\Big( \mathcal{O}_q(SL_2) \Big)
    @>>{\eqref{eq:com-to-mod}}>
    \fdMod\Big( \mathcal{O}_q(SL_2)^\vee \Big)
    @>{\varphi^\natural}>>
    \fdMod\Big(U_q(\mathfrak{sl}_2) \Big)
  \end{CD}
\end{equation*}
One has $\Phi(X_\ell) \cong V_\ell$. In particular, $\Phi$ maps simple objects to simple objects. Since $\mathcal{O}_q(SL_2)$ is cosemisimple, the functor $\Phi$ is fully faithful and therefore $\Phi$ preserves the FS indicator. Hence, by Theorem~\ref{thm:FS-ind-Oq}, we have:

\begin{theorem}
  $\nu(V_\ell) = (-1)^{2 \ell}$.
\end{theorem}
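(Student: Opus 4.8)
The plan is to reduce the statement to Theorem~\ref{thm:FS-ind-Oq} by invoking the invariance of the Frobenius--Schur indicator under $k$-linear fully faithful strong duality preserving functors (Proposition~\ref{prop:FS-ind-inv}), applied to the functor $\Phi$ constructed above.

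First I would verify that $\Phi$ satisfies the hypotheses of Proposition~\ref{prop:FS-ind-inv}. It is the composite of the functor \eqref{eq:com-to-mod} for $C = \mathcal{O}_q(SL_2)$ with the pull-back functor $\varphi^\natural$. The functor \eqref{eq:com-to-mod} is a strict duality preserving functor between categories with duality over $k$ (established in Section~\ref{sec:coalgebras}), and $\varphi^\natural$ is a strict duality preserving functor because $\varphi$ is a morphism of pivotal algebras: the identities $\varphi(K) = \gamma$ and $S^\vee \circ \varphi = \varphi \circ S$ follow from the defining relations of the Hopf pairing. Hence $\Phi$ is $k$-linear and strict duality preserving, in particular strong. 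Moreover, since $\mathcal{O}_q(SL_2)$ is cosemisimple, every finite-dimensional comodule is a direct sum of copies of the $X_\ell$, whose images under $\Phi$ are the pairwise non-isomorphic simple modules $V_\ell$; this forces $\Phi$ to be fully faithful.

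Granting these points, Proposition~\ref{prop:FS-ind-inv} gives $\nu(V_\ell) = \nu(\Phi(X_\ell)) = \nu(X_\ell)$, and Theorem~\ref{thm:FS-ind-Oq} supplies $\nu(X_\ell) = (-1)^{2\ell}$, which finishes the proof. The one step with genuine content is the identification $\Phi(X_\ell) \cong V_\ell$: one pairs the generators $K$, $E$, $F$ of $U_q(\mathfrak{sl}_2)$ against the basis elements $a^{\ell - i} b^{\ell + i}$ of $X_\ell$ via the Hopf pairing and checks that the resulting $U_q(\mathfrak{sl}_2)$-action matches the action of $K$, $E$, $F$ on $\{ v_i \}_{i \in I_\ell}$ given in the definition of $V_\ell$. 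This is a short direct computation with the explicit formulas and is the only (mild) obstacle in the argument. I would avoid the alternative of computing $\nu(V_\ell)$ directly inside $\fdMod(U_q(\mathfrak{sl}_2))$, since $U_q(\mathfrak{sl}_2)$ is infinite-dimensional and not separable, so no character formula analogous to Theorem~\ref{thm:piv-FS-ind-ch} is available there; it is precisely the passage to the cosemisimple coalgebra $\mathcal{O}_q(SL_2)$ that makes the computation tractable.
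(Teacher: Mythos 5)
Your proposal is correct and follows essentially the same route as the paper: the paper likewise uses the Hopf pairing to produce the morphism of pivotal algebras $\varphi$, notes $\Phi(X_\ell)\cong V_\ell$, deduces full faithfulness of $\Phi$ from cosemisimplicity of $\mathcal{O}_q(SL_2)$, and concludes via Proposition~\ref{prop:FS-ind-inv} and Theorem~\ref{thm:FS-ind-Oq}. Your identification of $\Phi(X_\ell)\cong V_\ell$ as the one step with actual computational content is accurate; the paper asserts it without proof.
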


By Theorem~\ref{thm:piv-FS-ind-ch}, this result reads as follows: For each $\ell \in \frac{1}{2} \mathbb{N}_0$, there exists a non-degenerate bilinear form $\beta$ on $V_\ell$ satisfying $\beta(u v, w) = \beta(v, S(u) w)$ and $\beta(w, K v) = (-1)^{2 \ell} \cdot \beta(v, w)$ for all $u \in U_q(\mathfrak{sl}_2)$ and $v, w \in V_\ell$.

$U_q(\mathfrak{sl}_2)$ has a Hopf algebra automorphism $\tau$ defined by $\tau(E) = -E$, $\tau(F) = -F$, $\tau(K) = K$. It is obvious that $\tau$ is an involution of the pivotal algebra $U_q(\mathfrak{sl}_2)$. Since $\langle \tau(u), f \rangle = \langle u, \tau(f) \rangle$ for all $u \in U_q(\mathfrak{sl}_2)$ and $f \in \mathcal{O}_q(SL_2)$, $\Phi$ also preserves the $\tau$-twisted FS indicator. Therefore we have:

\begin{theorem}
  $\nu^\tau(V_\ell) = +1$.
\end{theorem}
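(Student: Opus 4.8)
The plan is to copy the argument used just above for $\nu(V_\ell)=(-1)^{2\ell}$, replacing the duality functor everywhere by its $\tau$-twisted version. Concretely, I want to show that the functor $\Phi\colon\fdCom(\mathcal{O}_q(SL_2))\to\fdMod(U_q(\mathfrak{sl}_2))$ — which was already observed to be $k$-linear, fully faithful (because $\mathcal{O}_q(SL_2)$ is cosemisimple) and strict duality preserving — is moreover compatible with the two $\tau$-involutions: the one induced on $\fdCom(\mathcal{O}_q(SL_2))$ by the Hopf automorphism $\tau$ of $\mathcal{O}_q(SL_2)$, and the one induced on $\fdMod(U_q(\mathfrak{sl}_2))$ by the Hopf automorphism $\tau$ of $U_q(\mathfrak{sl}_2)$. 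Granting this, $\Phi$ preserves the $\tau$-twisted FS indicator, and since $\Phi(X_\ell)\cong V_\ell$ and $\nu^\tau(X_\ell)=+1$ by Theorem~\ref{thm:tw-FS-ind-Oq}, we conclude $\nu^\tau(V_\ell)=\nu^\tau(X_\ell)=+1$.

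To make the compatibility precise, recall from \S\ref{sec:piv-alg} that an involution $\tau$ of a pivotal algebra $A=(A,S,g)$ produces the twisted pivotal algebra $A^\tau=(A,S\circ\tau,g)$, and that $\nu^\tau(V)$ equals the untwisted FS indicator of $V$ viewed as an $A^\tau$-module; the analogous statement holds for copivotal coalgebras. Thus both occurrences of $\nu^\tau$ in sight are untwisted FS indicators in the representation categories of $\mathcal{O}_q(SL_2)^\tau$ (as a copivotal coalgebra) and $U_q(\mathfrak{sl}_2)^\tau$ (as a pivotal algebra). So it suffices to check that $\Phi$, built from the strict duality preserving functor of \eqref{eq:com-to-mod} and the pullback along the morphism of pivotal algebras $\varphi\colon U_q(\mathfrak{sl}_2)\to\mathcal{O}_q(SL_2)^\vee$, restricts to a $k$-linear fully faithful strict duality preserving functor between the representation categories of $\mathcal{O}_q(SL_2)^\tau$ and $U_q(\mathfrak{sl}_2)^\tau$; then Proposition~\ref{prop:FS-ind-inv} finishes the job. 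The comodule-to-module functor is automatically compatible with $\tau$ on the coalgebra side and its transpose on the dual-algebra side, so the only point to verify is that $\varphi$ intertwines the two $\tau$'s, i.e. $\varphi\circ\tau=\tau^\vee\circ\varphi$, where $\tau^\vee$ is the transpose of $\tau$ acting on $\mathcal{O}_q(SL_2)^\vee$. Equivalently $\langle\tau(u),f\rangle=\langle u,\tau(f)\rangle$ for all $u\in U_q(\mathfrak{sl}_2)$ and $f\in\mathcal{O}_q(SL_2)$, and since both sides are bilinear and respect the relevant multiplications it is enough to check this on the generators $E,F,K$ against $a,b,c,d$, which is immediate from the definitions of the Hopf pairing and of the two automorphisms $\tau$.

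The only genuine bookkeeping is in the middle step: one must confirm that ``twisting the dual pivotal algebra $\mathcal{O}_q(SL_2)^\vee$ by $\tau^\vee$'' and ``dualizing the copivotal coalgebra $\mathcal{O}_q(SL_2)^\tau$'' name the same pivotal algebra $(\mathcal{O}_q(SL_2)^\vee,\,S^\vee\circ\tau^\vee,\,\gamma)$, so that the diagram of twisted categories commutes and $\Phi$ indeed restricts as claimed. Everything else — the generator-level identity $\langle\tau(u),f\rangle=\langle u,\tau(f)\rangle$, and the fact that each functor in sight is $k$-linear, fully faithful and strict duality preserving — is routine and already contained in the earlier sections.
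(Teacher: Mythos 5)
Your proposal is correct and follows the paper's own route: the paper likewise observes that $\langle\tau(u),f\rangle=\langle u,\tau(f)\rangle$ for all $u\in U_q(\mathfrak{sl}_2)$ and $f\in\mathcal{O}_q(SL_2)$, concludes that $\Phi$ preserves the $\tau$-twisted FS indicator, and deduces $\nu^\tau(V_\ell)=\nu^\tau(X_\ell)=+1$ from Theorem~\ref{thm:tw-FS-ind-Oq}. Your extra bookkeeping (reducing both twisted indicators to untwisted ones for $\mathcal{O}_q(SL_2)^\tau$ and $U_q(\mathfrak{sl}_2)^\tau$ and checking that $\varphi$ intertwines the involutions) just makes explicit what the paper leaves implicit.
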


This result reads as follows: For each $\ell \in \frac{1}{2} \mathbb{N}_0$, there exists a non-degenerate bilinear form $\beta$ on $V_\ell$ satisfying $\beta(\tau(u) v, w) = b(v, S(u)w)$ and $\beta(w, K v) = b(v, w)$ for all $u \in U_q(\mathfrak{sl}_2)$ and $v, w \in V_\ell$.


\section{Conclusions}

As we have briefly reviewed in Section~\ref{sec:introduction}, the celebrated theorem of Frobenius and Schur has several generalizations. To give a category-theoretical understanding of these generalizations, in Section~\ref{sec:categ-with-dual} we have introduced the FS indicator for categories with duality over a field $k$; if $\mathcal{C}$ is a category with duality over $k$, then a linear map $\Trans_{X,Y}: \Hom_\mathcal{C}(X, Y^\vee) \to \Hom_\mathcal{C}(Y, X^\vee)$, $f \mapsto f^\vee \circ j$ is defined for each $X, Y \in \mathcal{C}$. We call $\Trans_{X,Y}$ the transposition map. The FS indicator $\nu(X)$ of $X \in \mathcal{C}$ is defined to be the trace of $\Trans_{X,X}: \Hom_\mathcal{C}(X, X^\vee) \to \Hom_\mathcal{C}(X, X^\vee)$. We have also introduced a general method to twist the given duality by an adjunction, which is a category-theoretical counterpart of several twisted versions of the Frobenius--Schur theorem.

In Section~\ref{sec:piv-alg}, we have introduced the notion of a pivotal algebra. The representation category of a pivotal algebra has duality and therefore the FS indicator is defined for each of its representation. We have given a representation-theoretic interpretation of the FS indicator and a formula of the FS indicator for separable pivotal algebras. These results yield the Frobenius--Schur-type theorems for Hopf algebras, quasi-Hopf algebras, weak Hopf $C^*$-algebras and Doi's group-like algebras. The notion of pivotal algebras is useful to deal with the twisted FS indicator; as a demonstration, we have constructed the twisted Frobenius--Schur theory for quasi-Hopf algebras.

In Section~\ref{sec:coalgebras}, we have introduced the notion of a copivotal coalgebra as the dual notion of a pivotal algebra and gave results for copivotal coalgebras analogous to pivotal algebras. In particular, we have given a representation-theoretic interpretation of the FS indicator and a formula of the FS indicator for coseparable copivotal coalgebras. 

In Section~\ref{sec:quantum-sl_2}, we have applied our results to the quantum coordinate ring $\mathcal{O}_q(SL_2)$ and the quantum enveloping algebra $U_q(\mathfrak{sl}_2)$. For each $\ell \in \frac{1}{2}\mathbb{N}_0$, $\mathcal{O}_q(SL_2)$ has a unique simple right comodule $X_\ell$ of dimension $2 \ell$. We have proved $\nu(X_\ell) = (-1)^{2 \ell}$ and analogous results for the twisted case and $U_q(\mathfrak{sl}_2)$ case. As we have remarked, the Haar functional on $\mathcal{O}_q(SL_2)$ is not used in our proof. We expect that the FS indicator for general $\mathcal{O}_q(G)$ will be determined by using the Haar functional.

\section*{Acknowledgments}

The author would like to thank the referees for careful reading the manuscript. The author is supported by Grant-in-Aid for JSPS Fellows (24$\cdot$3606).


\end{document}